\theoremstyle:=definition,remark,plain\do{%
				\expandafter\g@addto@macro\csname th@\theoremstyle\endcsname{%
						\addtolength\thm@preskip\parskip
						}%
				}
\newcommand{\R}{{\mathbb{R}}}
\newcommand{\C}{{\mathbb{C}}}
\newcommand{\Z}{{\mathbb{Z}}}
\newcommand{\sphere}{{\mathbb{S}}}
\newcommand{\E}{{\mathbb{E}}}
\newcommand{\cat}{{\mathcal{C}}}
\newcommand{\Spc}{\mathrm{Spc}}
\newcommand{\Alg}{\mathrm{Alg}}
\newcommand{\Span}{\mathrm{Span}}
\newcommand{\id}{\text{id}}
\newcommand{\ev}{\mathrm{ev}}
\newcommand{\op}{\text{op}}
\newcommand{\cofib}{\mathrm{cofib}}
\newcommand{\surj}{\twoheadrightarrow}
\newcommand{\Spectra}{\mathrm{Sp}}
\newcommand{\Fun}{\mathrm{Fun}}
\newcommand{\Fin}{\mathrm{Fin}}
\newcommand{\Perf}{\mathscr{P}\mathrm{erf}}
\newcommand{\Cat}{\mathscr{C}\mathrm{at}}
\newcommand{\Hom}{\text{Hom}}
\newcommand{\Map}{\mathrm{Map}}
\DeclareMathOperator*{\fiberproduct}{\times}
\newcommand{\Mack}{\mathrm{Mack}}
\newcommand{\Ar}{\mathrm{Ar}}
\newcommand{\Com}{\mathrm{Com}}
\newcommand{\Env}{\mathrm{Env}}
\newcommand{\fib}{\mathrm{fib}}
\newcommand{\tr}{\mathrm{tr}}
\newcommand{\Borel}{\mathrm{Borel}}
\newcommand{\inj}{\hookrightarrow}
\DeclareMathOperator{\Mod}{Mod}
\numberwithin{equation}{section}
\theoremstyle{plain} \newtheorem{theorem}[equation]{Theorem}
\theoremstyle{plain} \newtheorem*{theorem*}{Theorem}
\theoremstyle{definition} \newtheorem{defn}[equation]{Definition}
\theoremstyle{plain} \newtheorem{prop}[equation]{Proposition}
\theoremstyle{plain} \newtheorem*{prop*}{Proposition}
\theoremstyle{plain} \newtheorem{lemma}[equation]{Lemma}
\theoremstyle{plain} \newtheorem{cor}[equation]{Corollary}
\theoremstyle{definition} \newtheorem{ex}[equation]{Example}
\theoremstyle{definition} \newtheorem{exs}[equation]{Examples}
\theoremstyle{definition} 
\theoremstyle{definition} \newtheorem{rmk}[equation]{Remark}
\theoremstyle{remark} 
\theoremstyle{definition} 
\theoremstyle{definition} \newtheorem{obs}[equation]{Observation}
\theoremstyle{plain} 
\theoremstyle{remark} 
\theoremstyle{definition} 
\theoremstyle{definition} 
\theoremstyle{definition} \newtheorem{ntn}[equation]{Notation}
\theoremstyle{plain} 
\theoremstyle{remark} 
\theoremstyle{definition} 
\theoremstyle{definition} \newtheorem{construction}[equation]{Construction}
\theoremstyle{definition} \newtheorem{variant}[equation]{Variant}
\theoremstyle{remark} \newtheorem{warning}[equation]{Warning}
\theoremstyle{definition} \newtheorem{recollection}[equation]{Recollection}
\titleformat{\subsubsection}[runin]{\bfseries}{\thesubsubsection}{1em}{}
\newcommand{\morphism}[3]{\mathop{\rotatebox{270}{$\xrightarrow{\rotatebox{90}{$\scriptstyle#2$}}$}}\limits^{#1}_{#3}}
\DeclareFontFamily{U}{cbgreek}{}
\DeclareFontShape{U}{cbgreek}{m}{n}{
        <-6>    grmn0500
        <6-7>   grmn0600
        <7-8>   grmn0700
        <8-9>   grmn0800
        <9-10>  grmn0900
        <10-12> grmn1000
        <12-17> grmn1200
        <17->   grmn1728
      }{}
\DeclareRobustCommand{\Qoppa}{%
  \text{\usefont{U}{cbgreek}{m}{n}\symbol{21}}%
}
\definecolor{DarkGreen}{RGB}{0,100,0}
\title{\texorpdfstring{\vspace{-2em}}{}On normed $ \E_\infty $-rings in genuine equivariant $ C_p $-spectra\texorpdfstring{\vspace{-0.5em}}{}}
\author{Lucy Yang}
\date{August 30, 2023}
\begin{document}
\maketitle
\begin{abstract}
	Genuine equivariant homotopy theory is equipped with a multitude of coherently commutative multiplication structures generalizing the classical notion of an $ \E_\infty $-algebra. 
	In this paper we study the $ C_p $-$ \E_\infty $-algebras of Nardin--Shah with respect to a cyclic group $ C_p $ of prime power order. 
	We show that many of the higher coherences inherent to the definition of parametrized algebras collapse; in particular, they may be described more simply and conceptually in terms of ordinary $ \E_\infty $-algebras as a diagram category which we call \emph{normed algebras}. 
	Our main result provides a relatively straightforward criterion for identifying $ C_p $-$ \E_\infty $-algebra structures. 
	We visit some applications of our result to real motivic invariants.   
\end{abstract}
\tableofcontents

\section{Introduction}
\subsection{Motivation}\label{subsection:motivation}
Algebraic invariants such as integral cohomology $ H^*(-;\Z) $ detect information about spaces; identifying and applying such tools form the basic premise of algebraic topology. 
Moreover, considering more structured algebraic objects leads to more refined invariants:  
Cochains with integral coefficients $ X \mapsto C^*(X;\Z) $ considered as a functor from spaces to $ \E_\infty $-$ \Z $-algebras is \emph{better} at detecting information about spaces than integral cohomology. 
For instance, $ C^*(X;\Z) $ inherits power operations as a consequence of its $ \E_\infty $-structure, and two (non-equivalent) spaces $ X $ and $ Y $ may have isomorphic cohomology rings but different power operations. 
In this way, we see that the study of structured multiplications and their operations is foundational to homotopy theory. 

In parallel, the study of structured multiplications is essential to the study of genuine equivariant homotopy types.  
Our line of inquiry naturally leads to algebraic structures whose operations are inherently genuine equivariant. 
To motivate the particular equivariant multiplicative structure we will focus on, recall that an ordinary $ \E_\infty $-algebra in spectra may be modeled by a functor satisfying certain conditions defined on  
the category of finite pointed sets \cite{segal1974categories}. 
In particular, the smash product $ A^{\otimes \ell} $ parametrizes formal sums of $ \ell $-tuples in $ A $, and said functor takes the collapse map $ \langle 2 \rangle \to \langle 1 \rangle $ to a morphism $ A^{\otimes 2} \to A $. 
In particular, $ \E_\infty $-algebra structures are governed by the category of finite pointed sets. 

In genuine equivariant homotopy theory with respect to the finite cyclic group $ C_p $ of order a prime $ p $, the role of finite sets is supplanted by \emph{finite pointed sets with $ C_p $-action} \cites[\S4]{MR3290092}[\S3.3]{hill2016equivariant}. 
The Hill--Hopkins--Ravenel norm $ N^{C_p}_e A =: A^{\otimes C_p} $ parametrizes $ | C_p| $-tuples in $ A $ indexed by a free $ C_p $-set. 
In \cite{MR3406512}, Blumberg and Hill introduced a genuine equivariant operads encoding multiplications indexed by $ G $-sets; the algebras they give rise to are called $ N_\infty $-algebras. 
In \cite[Definition 2.2.2]{NS22}, Nardin and Shah defined an $ \infty $-categorical analogue of the $ N_\infty $-algebras of Blumberg--Hill\footnote{These notions are expected to agree.}; we shall refer to the latter as $ C_p $-$ \E_\infty $-algebras (Definition \ref{defn:param_alg}, Example \ref{ex:param_calg}).  
Their structure of operations is governed by the category of finite pointed $ C_p $-sets $ \Fin_{C_p,*} $. 

Unravelling definitions, a $ C_p $-$ \E_\infty $-algebra is the data of
\begin{enumerate}[label=(\arabic*)]
	\item An underlying $ C_p $-genuine equivariant spectrum $ R $. 
	\item For each morphism of finite $ C_p $-sets $ S \to T $, a morphism of $ C_p $-spectra $ N_e^{S}R \to N_e^T R $. 
	In particular, the collapse map $ C_p \surj C_p/C_p $ indexes a morphism of $ \E_\infty $ rings $ n_R \colon N_e^{C_p}R \to R $ called the \emph{norm}. 
	\item higher coherences... 
\end{enumerate}
To exhibit a $ C_p $-$ \E_\infty $-algebra structure on a genuine $ C_p $-spectrum $ R $ is no small task. 
In the literature, one often resorts to simplifying assumptions such as requiring $ R $ to be Borel, e.g. \cite[Proposition 3.3.6]{HilmanThesis}. 
We set out to provide a relatively straightforward criterion for identifying $ C_p $-$ \E_\infty $-algebra structures. 

When $ p = 2 $, by \cite[Definition 5.2]{QScyc_equiv} the category of $ C_2 $-$ \E_\infty $-algebras is the natural domain of definition for real (i.e. $ C_2 $-equivariant) topological Hochschild homology and other real motivic invariants. 
This work grew out of the author's interest in real motivic invariants and will be used in upcoming work on a real version of the Hochschild--Kostant--Rosenberg theorem. 

\subsection{Main result}
To motivate our main result, note that any morphism of $ C_p $-sets $ S \to C_p/C_p $ can be expressed as the composite of `collapse the free $ C_p $-orbits' followed by a (non-equivariant) map of finite sets.  
Thus a $ C_p $-$ \E_\infty $ algebra, regarded as functor defined on $ C_p $-sets, determines two pieces of data: its restriction to sets on which $ C_p $ acts trivially, and its value on collapse maps. 
The former specifies an $ \E_\infty $-algebra structure, while the latter specifies a norm map $ n $. 
To assert that these data are \emph{enough} to specify a $ C_p $-$ \E_\infty $-algebra structure means that any higher coherence conditions on the norm map $ n $ collapse. 
We might hope that this is indeed the case, since the category $ \Fin_{C_p,*} $ is \emph{freely} generated by the $ C_p $-set $ C_p/C_p $. 

Let $ A \in \E_\infty\Alg(\Spectra^{BC_p} ) $ be an $ \E_\infty $-algebra with naïve $ C_p $-action and $ \sigma \in C_p $ a generator. 
\begin{obs} [Observation \ref{obs:clarifying_actions}]
	Write $ A^{\otimes^\Delta p} $ for the object in $ \E_\infty\Alg(\Spectra^{BC_p} ) $ with the \emph{diagonal} action, i.e. such that $ \sigma $ acts by $ \sigma (a_1 \otimes \cdots \otimes a_p) = \sigma(a_p) \otimes \sigma (a_1) \otimes \cdots \otimes \sigma(a_{p-1}) $. 
	Write $ A^{\otimes^\tau p} $ for the object in $ \E_\infty\Alg(\Spectra^{BC_p} ) $ with the \emph{transposition} action, i.e. such that $ \sigma $ acts by $ \sigma (a_1 \otimes \cdots \otimes a_p) = a_p \otimes a_1 \otimes \cdots \otimes a_{p-1} $.  
	Then the endomorphism $ \id_A \otimes \sigma \otimes \cdots \otimes \sigma^{p-1} $ of $ A^{\otimes p} \in \E_\infty \Alg(\Spectra) $ promotes to an equivalence $ A^{\otimes^\Delta p} \to A^{ \otimes^\tau p} $ in $ \E_\infty \Alg(\Spectra)^{BC_p} $--in particular it is $ C_p $-equivariant.
\end{obs}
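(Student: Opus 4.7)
The plan is a direct verification of $C_p$-equivariance on the nose, followed by an appeal to functoriality to secure the higher coherences.

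Observe first that $f := \id_A \otimes \sigma \otimes \cdots \otimes \sigma^{p-1}$ is patently an equivalence in $\E_\infty\Alg(\Spectra)$, since each tensor factor $\sigma^i \colon A \to A$ is an automorphism of the underlying $\E_\infty$-algebra, being the action of a group element. The content of the observation is therefore the promotion of $f$ to a morphism in $\E_\infty\Alg(\Spectra)^{BC_p} \simeq \Fun(BC_p, \E_\infty\Alg(\Spectra))$.

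To check equivariance at the underlying level, I would compare the two composites $\tau \circ f$ and $f \circ \phi$ coordinate-by-coordinate on an elementary tensor $a_1 \otimes \cdots \otimes a_p$. Both composites produce an output of the form $\bigotimes_i \sigma^{e_i}(a_{c_i})$ for explicit exponents $e_i \in \Z/p\Z$ and a cyclic reindexing $c_i$; the two strings of exponents then match coordinate-by-coordinate, with the `wrap-around' at the boundary of the cyclic permutation producing a $\sigma^p$ that is killed by the hypothesis $\sigma^p = \id_A$. This is the point where the choice of the exponents $0, 1, \ldots, p-1$ in $f$ is dictated: the exponent must increment by $1$ per factor so that the twist introduced by the $\sigma$ in $\phi$ is absorbed.

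The main obstacle is upgrading this on-the-nose equivariance to coherent equivariance in the $\infty$-category $\Fun(BC_p, \E_\infty\Alg(\Spectra))$—a morphism there is not just a commuting square but a natural transformation carrying a tower of higher compatibilities. I expect these coherences to be delivered automatically by functoriality: both $A^{\otimes^\Delta p}$ and $A^{\otimes^\tau p}$ are produced by symmetric monoidal constructions natural in $A \in \E_\infty\Alg(\Spectra^{BC_p})$, and the map $f$ is itself assembled from the action maps of $A$, which already carry the full $BC_p$-coherence data. The higher coherences needed for $f$ thus descend from those already present in the $C_p$-action on $A$, so that no additional cohomological obstruction can arise.
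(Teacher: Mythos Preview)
The paper records this as an Observation and offers no proof at the point of statement. The argument it has in mind surfaces later, inside the proof of Proposition~\ref{prop:restrictions_compatibilities}(f), where the shear equivalence is obtained from the $C_p$-equivariant bijection of finite sets
\[
sh \colon \{0,\ldots,p-1\} \times C_p \longrightarrow C_p \times C_p,\qquad (a,b)\mapsto (a+b,b),
\]
equivariant for the $C_p$-action on the second factor of the source and the diagonal action on the target. The assignment $S \mapsto A^{\otimes S}$ is functorial in the finite $C_p$-set $S$, so this equivariant bijection transports to an equivalence in $\E_\infty\Alg(\Spectra)^{BC_p}$ with all higher coherences supplied at once.

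Your route is different and has a genuine gap. The coordinatewise comparison of $\tau\circ f$ against $f\circ\phi$ is a $\pi_0$-level heuristic: a morphism in $\Fun(BC_p,\E_\infty\Alg(\Spectra))$ is not a map plus one commuting square but a map together with an infinite tower of compatibilities, and the relation $\sigma^p=\id_A$ that ``kills the wrap-around'' is itself only a specified homotopy, not an equality. You concede in your Step~3 that this is the main obstacle and then offer an expectation rather than an argument. The intuition that the coherences ``descend from those already present in the $C_p$-action on $A$'' is exactly right, but making it a proof requires exhibiting $f$ as the image, under a $C_p$-functor, of something that is already equivariant on the nose---and that something is the shear bijection of indexing sets above, which is a map of \emph{discrete} $C_p$-sets and so carries no hidden coherence burden.

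A smaller point: if you actually carry out your coordinate computation for $p\geq 3$, you find that $f=\id_A\otimes\sigma\otimes\cdots\otimes\sigma^{p-1}$ satisfies $\phi\circ f = f\circ\tau$ rather than $\tau\circ f = f\circ\phi$; that is, $f$ is equivariant as a map $A^{\otimes^\tau p}\to A^{\otimes^\Delta p}$, the reverse of the stated direction. (For $p=2$ the two directions coincide.) The Observation survives---the inverse of $f$ works---but this confirms that the promised check was asserted rather than performed.
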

\begin{defn} [Definition \ref{defn:normedrings}]
	Write $ \mathcal{O}_{C_p} $ for the category of finite sets with transitive $ C_p $-action, and let $ \sigma \in C_p $ be a generator.
	A \emph{normed $ \E_\infty $-algebra} in $ C_p $-spectra is the data of an $ \E_\infty $-algebra $ A $ in $ \Spectra^{C_p} $, a morphism of $ \E_\infty $-rings $ n_A \colon N^{C_p}(A^e_{hC_p}) \to A $, and a homotopy making the following diagram $ \mathcal{O}_{C_p} \to \E_\infty \Alg(\Spectra)^{BC_p}  $
	\begin{equation*}
		\begin{tikzcd}[row sep=tiny]
		  (A^e)^{\otimes^\tau p} \ar[dd,"\rotatebox{90}{$\sim$}","{\id \otimes \sigma \otimes \cdots \otimes \sigma^{p-1}}"'] \ar[rd,"{n_A^e}", bend left=10]& \\
		& A^e \\
		 (A^e)^{\otimes^\Delta p} \ar[ru,"{m_A}"', bend right=10] &
		\end{tikzcd}
	\end{equation*}
	commute, where the $ C_p $-action on $ (A^e)^{\otimes^\Delta p} $ corresponding to the inclusion $ BC_p \subseteq \mathcal{O}_{C_p} $ is the transposition.
\end{defn}

The main result of this paper both formalizes and confirms the aforementioned intuitive picture. 

\begin{theorem}  [Corollary \ref{cor:operadic_diagrammatic_comparison} \& Theorem \ref{thm:mainthm}] 
	The canonical forgetful functor from the category of $ C_p $-$\E_\infty $ algebras in $ C_p $-spectra (Example \ref{ex:param_calg}) to the category of normed $ \E_\infty $-algebras in $ C_p $-spectra is an equivalence.  
\end{theorem}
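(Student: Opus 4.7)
The plan is to realize both sides of the claimed equivalence as algebras for closely related $\infty$-operads, and to reduce the comparison to a finite amount of combinatorics in $\Fin_{C_p,*}$. The Nardin--Shah operad controlling $C_p$-$\E_\infty$-algebras is built directly from $\Fin_{C_p,*}$, whereas the normed-algebra data is indexed by the much smaller diagram $\mathcal{O}_{C_p}$. My strategy is to present the former by generators and relations on top of the latter, and to verify that the only nontrivial relation is precisely the one recorded in Definition \ref{defn:normedrings}.

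First I would unwind the operadic definition: a $C_p$-$\E_\infty$-algebra corresponds to a functor $F \colon \Fin_{C_p,*} \to \Spectra^{C_p}$ satisfying a $C_p$-Segal condition, which forces $F$ to be determined by its values on orbits. Since the only $C_p$-orbits are $C_p/C_p$ and $C_p/e$, setting $A \coloneqq F((C_p/C_p)_+)$ automatically gives $F((C_p)_+) \simeq N^{C_p}(A^e)$ with the transposition action. The restriction of $F$ to pointed sets with trivial $C_p$-action endows $A$ with an $\E_\infty$-ring structure in $\Spectra^{C_p}$, while the image under $F$ of the collapse map $(C_p)_+ \to (C_p/C_p)_+$ supplies the norm $n_A$. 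Thus forgetting the remaining data recovers exactly the triple appearing on the normed side.

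The heart of the argument is identifying the higher compatibility with the single two-cell in Definition \ref{defn:normedrings}. Every morphism in $\Fin_{C_p,*}$ canonically factors as a free-orbit collapse followed by a fixed-point map, so the generating relations between these two kinds of morphisms come from squares of the form
\begin{equation*}
\begin{tikzcd}[column sep=large]
(C_p \sqcup C_p)_+ \ar[r,"\nabla"] \ar[d] & (C_p)_+ \ar[d] \\
(C_p/C_p \sqcup C_p/C_p)_+ \ar[r,"\nabla"'] & (C_p/C_p)_+,
\end{tikzcd}
\end{equation*}
in which the verticals collapse the free $C_p$-orbits. Tracking through the Segal identifications, the two paths around this square evaluate to $m_A$ (precomposed with the twist $\id \otimes \sigma \otimes \cdots \otimes \sigma^{p-1}$ supplied by Observation \ref{obs:clarifying_actions}) and to $n_A^e$ respectively, so its commutativity is exactly the defining two-cell of a normed algebra.

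The main obstacle will be showing these exhaust all the relations, i.e.\ that no higher simplices in $\Fin_{C_p,*}$ impose further conditions beyond those automatic from the $\E_\infty$-structure on $A$ and the single compatibility square. My plan is to present the $\infty$-operad controlling $C_p$-$\E_\infty$-algebras as a pushout of $\infty$-operads, glueing the ordinary $\E_\infty$-operad (for the trivial-action sub-data) to a free operad on the norm generator along the square above; algebras for such a pushout automatically amount to compatible pairs of algebras equipped with a filling of the square, which yields the normed-algebra category. Justifying this pushout presentation rigorously is the delicate point, and I would dispatch it by an explicit cell-by-cell analysis of the subcategory of $\Fin_{C_p,*}$ spanned by $(C_p/e)_+$ and $(C_p/C_p)_+$ — equivalently, by identifying both sides with the free appropriately-structured $\infty$-category on one generator equipped with a norm compatible with the transposition twist.
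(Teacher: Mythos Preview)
Your strategy is genuinely different from the paper's, and the difference matters. The paper explicitly avoids analyzing the operadic indexing category directly (see \S\ref{subsection:motivation} and the outline): it instead constructs the comparison functor $\gamma$, proves that both $G\colon C_p\E_\infty\Alg(\Spectra^{C_p}) \to \E_\infty\Alg(\Spectra^{C_p})$ and $G'\colon N\E_\infty\Alg(\Spectra^{C_p}) \to \E_\infty\Alg(\Spectra^{C_p})$ are monadic, and then computes the free $C_p$-$\E_\infty$-algebra $F(A)$ on an $\E_\infty$-algebra $A$ explicitly via a parametrized left Kan extension (Theorem \ref{theorem:freenormedalg_formula}). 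The comparison then reduces to checking that $\gamma F(A)$ corepresents the correct functor on $N\E_\infty\Alg$ (Proposition \ref{prop:unit_in_normedalg}), after which \cite[Proposition 4.7.3.16]{LurHA} finishes the job. No pushout of $\infty$-operads and no cell-by-cell analysis of $\underline{\Fin}_{C_p,*}$ appears.

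Your proposal has a real gap precisely where you flag the ``main obstacle.'' You assert that the $C_p$-$\E_\infty$-operad can be presented as a pushout of the ordinary $\E_\infty$-operad with a free operad on the norm generator, glued along the single compatibility square, and that algebras over such a pushout are automatically compatible pairs with a filling. Neither claim is justified. Pushouts of $\infty$-operads are notoriously difficult to compute, and the passage from ``these are the generating $1$-morphisms and here is one relation'' to ``there are no further higher coherences'' is exactly the content of the theorem---it cannot be discharged by a finite cell-by-cell inspection of a $1$-categorical subdiagram. Indeed, the indexing object here is the \emph{parametrized} category $\underline{\Fin}_{C_p,*}$, not the ordinary category $\Fin_{C_p,*}$, so your functor $F\colon \Fin_{C_p,*} \to \Spectra^{C_p}$ already mis-states the data; the Segal condition and the norm identifications live over $\mathcal{O}_{C_p}^{\op}$ and involve the cocartesian structure, not merely the fibers. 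The paper's remark that ``handling the higher coherence conditions associated to $n$ gets complicated quickly'' is a warning that your proposed route, while heuristically compelling, is not obviously tractable; the monadicity argument sidesteps this entirely by reducing to a single computation of a free object.
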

A key input to the proof of Theorem \ref{thm:mainthm} is an explicit description of the free $ C_p $-$ \E_\infty $ algebra in $ \Spectra^{C_p} $ on an $ \E_\infty $-algebra $ A $ in $ \Spectra^{C_p} $.
By Theorem \ref{theorem:freenormedalg_formula} and Proposition \ref{prop:eqvtspectrarecollement}, the underlying $ C_p $-spectrum of the free $ C_p $-$ \E_\infty $ algebra $ F(A) $ on $ A $ is given by 
\begin{equation*}
F(A) \simeq
\begin{tikzcd}[cramped]
	& A^{\varphi{C_p}}  \otimes A^e_{hC_p} \ar[d,"{s_A \otimes \nu_A}"] \\
	A^e \ar[r,mapsto] & A^{tC_p}
\end{tikzcd}
\end{equation*}
where $ u $ is the unit, $ s_A : A^{\varphi{C_p}} \to A^{tC_p} $ is the structure map, and $ \nu_A $ is the twisted Tate-valued norm (Definition \ref{defn:twisted_tate_frob}). 

\begin{rmk}
	There is an analogous statement (Theorem \ref{thm:relative_mainthm}), proved in essentially the same way, for relative normed algebras, i.e. $ C_p $-$ \E_\infty $-algebras \emph{over} a fixed $ C_p $-$ \E_\infty $-algebra $ A $. 
\end{rmk}
One expects an analogous result to hold for arbitrary $ G $, but we stick with $ C_p $ here because the author's motivating example is the case $ G = C_2 $, and because the complexity of (\ref{diagram:normedrings}) seemingly grows exponentially in the subgroup lattice of $ G $. 

\subsection{Applications \& Examples}
The power of Theorem \ref{thm:mainthm} is that, in many cases, it is easier to identify objects in the diagram category Definition \ref{defn:normedrings} than to produce a $ C_p $-$ \E_\infty $-algebra in the sense of Definition \ref{defn:param_alg}, which requires exhibiting an infinite amount of coherence data. 
In particular, a normed ring is the data of an $ \E_\infty $-ring in $ \Spectra^{C_p} $ plus the additional datum of a commutative diagram (\ref{eq:tatefrobenius_lift}). 
As an application, in \S\ref{section:app_ex} we show that various $ \E_\infty $-rings in $ \Spectra^{C_p} $ admit natural lifts to $ C_p $-$ \E_\infty $-rings in $ \Spectra^{C_p} $. 
\begin{cor} [Theorem \ref{thm:constMackey_is_normed}]
	Let $ k $ be a discrete commutative ring. 
	The constant $ C_p $-Mackey functor $ \underline{k} $ on $ k $ acquires an essentially unique structure of a $ C_p $-$ \E_\infty $-ring. 	
\end{cor}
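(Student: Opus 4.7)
The plan is to apply Theorem \ref{thm:mainthm}, reducing the corollary to the problem of constructing an essentially unique normed $\E_\infty$-algebra refinement of $\underline{k}$. Everything will follow from the fact that $\underline{k}$ is $0$-truncated, which forces every higher coherence to reduce to an algebraic identity on $\pi_0$.

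First I record the input data. The constant Mackey functor $\underline{k}$ carries a canonical $\E_\infty$-algebra structure in $\Spectra^{C_p}$ induced from the commutative ring structure on $k$ via the lax symmetric monoidal Eilenberg--MacLane functor, and its underlying spectrum $\underline{k}^e$ is $Hk$ equipped with the trivial $C_p$-action, so $\underline{k}^e_{hC_p} \simeq Hk \otimes \Sigma^\infty_+ BC_p$. I then construct the norm $n_{\underline{k}}\colon N^{C_p}(\underline{k}^e_{hC_p}) \to \underline{k}$ using the norm--restriction adjunction, which identifies it with an $\E_\infty$-ring map $Hk \otimes \Sigma^\infty_+ BC_p \to Hk$; I take the map induced by the augmentation $\Sigma^\infty_+ BC_p \to S$. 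Finally, the coherence homotopy required in Definition \ref{defn:normedrings} is supplied as follows: since the Weyl action on $\underline{k}^e$ is trivial, the comparison equivalence $\id \otimes \sigma \otimes \cdots \otimes \sigma^{p-1}$ of Observation \ref{obs:clarifying_actions} is itself the identity, so the homotopy reduces to a comparison of two maps $(Hk)^{\otimes p} \to Hk$ in $\E_\infty\Alg(\Spectra)^{BC_p}$, namely $n_{\underline{k}}^e$ and the iterated multiplication $m_{\underline{k}}$.

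The key simplification is that $Hk$ is $0$-truncated, so the mapping space $\Map_{\E_\infty\Alg(\Spectra)^{BC_p}}\bigl((Hk)^{\otimes p}, Hk\bigr)$ is discrete; consequently both existence of the homotopy and essential uniqueness of the normed structure reduce to a $\pi_0$-level identity. I verify this identity by tracing the adjunction and using the identification $(N^{C_p} X)^e \simeq X^{\otimes p}$ (with transposition action) given by the free-algebra formula of Theorem \ref{theorem:freenormedalg_formula}, to see that $n_{\underline{k}}^e$ descends on $\pi_0$ to the $p$-fold product $k^{\otimes_\Z p} \to k$, which is precisely $\pi_0 m_{\underline{k}}$. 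The same discreteness argument then shows that the space of normed enhancements of $\underline{k}$ is contractible. The main potential obstacle is the bookkeeping in this last identification: one must carefully match the adjunction description of $n_{\underline{k}}$ against the Green/Tambara structure on $\pi_0 \underline{k}$, but no deeper analysis is needed since everything reduces to the discrete ring $k$.
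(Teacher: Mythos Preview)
Your argument is correct, but it takes a genuinely different route from the paper's. The paper works through the recollement description of Remark~\ref{rmk:CpEinftyringdata}: it computes the isotropy separation sequence to identify $\underline{k}^{\varphi C_p}\simeq \tau_{\geq 0}\,k^{tC_p}$, and then observes that since $k$ is connective and $\tau_{\geq 0}$ is right adjoint to the inclusion of connective $\E_\infty$-rings, the Tate-valued norm $k\to k^{tC_p}$ lifts uniquely through the connective cover. You instead stay with the direct limit description of Definition~\ref{defn:normedrings}, produce the norm map via the norm--restriction adjunction on $\E_\infty$-algebras, and exploit the fact that $Hk$ is $0$-truncated to collapse all the relevant mapping spaces to sets. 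Your approach is more hands-on and avoids computing geometric fixed points, while the paper's approach makes the connection to the Tate-valued norm explicit and would generalize more readily to other connective bases whose underlying spectrum is not discrete.

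Two small points. First, the identification $(N^{C_p}X)^e\simeq X^{\otimes p}$ you invoke is Recollection~\ref{rec:HHRnorm}, not Theorem~\ref{theorem:freenormedalg_formula}; the latter concerns the free $C_p$-$\E_\infty$-algebra on an $\E_\infty$-algebra, which is a different object. Second, the norm--restriction adjunction on $\E_\infty$-algebras is not stated explicitly in this paper (only cocontinuity of $N^{C_p}$ is, in Lemma~\ref{lemma:norm_cocontinuous_alg}); you should cite an external source for the fact that the resulting right adjoint is restriction. With those adjustments your uniqueness claim is justified: the fiber of $G'$ over $\underline{k}$ consists of pairs $(n,h)$ with $n\colon N^{C_p}Hk\to\underline{k}$ and $h$ a homotopy $n^e\simeq m$, and both the adjunction and the discreteness of $\Map_{\E_\infty\Alg^{BC_p}}((Hk)^{\otimes p},Hk)$ force this fiber to be a single point.
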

Using our main theorem, we are able to give an alternative proof of a special case of a result \cite[Proposition 3.3.6]{HilmanThesis} of Kaif Hilman. 
In view of the expected correspondence between $ N_\infty $-algebras and $ C_p $-$ \E_\infty $-algebras, the following result should also be compared to Theorem 6.26 of \cite{MR3406512}. 
\begin{cor}
	[Proposition \ref{prop:Borel_are_normed}] 	 
	Every Borel $ \E_\infty$-algebra in $ C_p $-spectra admits an essentially unique refinement to a $ C_p $-$ \E_\infty $-algebra. 
\end{cor}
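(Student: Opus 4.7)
By Theorem \ref{thm:mainthm} we may replace $C_p$-$\E_\infty$-algebras with normed $\E_\infty$-algebras. Write $U$ for the forgetful functor from normed $\E_\infty$-algebras to $\E_\infty\Alg(\Spectra^{C_p})$ and $F$ for its left adjoint, the free algebra functor discussed just after the main theorem. Call an object on either side \emph{Borel} if its underlying $C_p$-spectrum lies in the Borel subcategory, i.e.\ has vanishing geometric fixed points. The strategy is to show that the adjunction $F \dashv U$ restricts to an adjoint equivalence on the full subcategories of Borel objects; this is equivalent to the Borel $\E_\infty$-algebra admitting an essentially unique $C_p$-$\E_\infty$-refinement.

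First I would use the explicit free-algebra formula (the pullback diagram displayed after Theorem \ref{thm:mainthm}, coming from Theorem \ref{theorem:freenormedalg_formula} together with Proposition \ref{prop:eqvtspectrarecollement}) to verify that $F$ preserves Borelness. In the recollement description, the geometric fixed point component of $F(A)$ is $A^{\varphi C_p} \otimes A^e_{hC_p}$, which vanishes for Borel $A$; hence $F(A)$ is Borel. Next I would show that for Borel $A$, the unit $\eta_A : A \to U F(A)$ is an equivalence. Reading off the Borel component of $F(A)$ from the same pullback identifies $F(A)^e$ with $A^e$ as objects of $\E_\infty\Alg(\Spectra)^{BC_p}$, so $A$ and $UF(A)$ are Borel with equivalent underlying data and $\eta_A$ is an equivalence.

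To conclude, I would apply the triangle identities: for Borel $B$, the relation $U\epsilon_B \circ \eta_{UB} = \id_{UB}$ together with the fact that $\eta_{UB}$ is an equivalence (applying the preceding paragraph to the Borel object $UB$) shows $U\epsilon_B$ is an equivalence. Conservativity of the composite forgetful functor from $C_p$-$\E_\infty$-algebras to $\Spectra^{C_p}$ (inherited from conservativity of $\E_\infty\Alg(\Spectra^{C_p}) \to \Spectra^{C_p}$) then implies $\epsilon_B$ itself is an equivalence. The main obstacle is really the bookkeeping in the middle step: checking that the unit $\eta_A$ is an equivalence as $\E_\infty$-algebras, not merely as underlying spectra, which amounts to verifying that the free-algebra formula respects the $\E_\infty$-ring structure on the Borel component; this is built into Theorem \ref{theorem:freenormedalg_formula} and the fact that the Borel inclusion into $\E_\infty\Alg(\Spectra^{C_p})$ is fully faithful at the structured level.
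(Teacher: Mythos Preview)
Your argument rests on a misidentification of what it means for a $C_p$-spectrum to be Borel. You say a $C_p$-spectrum is Borel if its geometric fixed points vanish, but this is false: as recalled just before Proposition~\ref{prop:Borel_are_normed}, a $C_p$-spectrum $A$ is Borel precisely when the structure map $s_A \colon A^{\varphi C_p} \to (A^e)^{tC_p}$ is an \emph{equivalence}. The Tate construction is typically nonzero, so Borel objects typically have nonzero geometric fixed points. Consequently your claim that $F(A)^{\varphi C_p} \simeq A^{\varphi C_p} \otimes A^e_{hC_p}$ vanishes for Borel $A$ is unfounded, and in fact $F$ does not preserve Borelness: for Borel $A$ one computes $F(A)^{\varphi C_p} \simeq (A^e)^{tC_p} \otimes A^e_{hC_p}$, whereas $(F(A)^e)^{tC_p} \simeq (A^e)^{tC_p}$, and the comparison map $s_A \otimes \nu_A$ between these is generally not an equivalence. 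For the same reason the unit $\eta_A$ is not an equivalence on geometric fixed points (it is $\id_{A^{\varphi C_p}} \otimes \eta_{A^e_{hC_p}}$, which tensors in a nontrivial unit map), so the adjunction does not restrict to an equivalence on Borel objects.

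The paper's argument avoids the free--forgetful adjunction entirely. It uses the description of normed algebras from Remark~\ref{rmk:CpEinftyringdata}: a normed structure on $A$ amounts to a lift of the Tate-valued norm $A^e \to (A^e)^{tC_p}$ through $s_A \colon A^{\varphi C_p} \to (A^e)^{tC_p}$. When $A$ is Borel, $s_A$ is an equivalence, so the space of such lifts (together with the witnessing homotopy) is contractible, giving existence and essential uniqueness immediately.
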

Many examples arise in the case $ p = 2 $ because involutions are ubiquitous in topology. 
Natural examples of $ \E_\infty $-rings in $ \Spectra^{C_2} $ include real topological and algebraic K-theories (\cite{MR206940} \& \ref{defn:real_alg_Ktheory}). 
\begin{cor} [Corollary \ref{cor:realKtheory_normed}]
\begin{itemize}
	\item Real topological K-theory $ KU_{\R} $ admits a unique refinement to a $ C_2 $-$ \E_\infty $ ring spectrum. 
	\item If $ A $ satisfies the \emph{homotopy limit problem}, then $ K_{\R}(A) $ admits a unique refinement to a $ C_2 $-$\E_\infty $ ring spectrum. 
\end{itemize}	
\end{cor}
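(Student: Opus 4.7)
The plan is to reduce both statements to Proposition~\ref{prop:Borel_are_normed}, which asserts that every Borel $\E_\infty$-algebra in $C_p$-spectra admits an essentially unique $C_p$-$\E_\infty$ refinement. In other words, I would verify in each case that $KU_\R$ and $K_\R(A)$ lie in the essential image of the multiplicative Borel functor $\E_\infty\Alg(\Spectra^{BC_2}) \to \E_\infty\Alg(\Spectra^{C_2})$, and then the corollary follows formally.

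For $KU_\R$, I would invoke Atiyah's solution of the homotopy limit problem for Real topological $K$-theory: the natural comparison $KU_\R^{C_2} \to (KU)^{hC_2}$ is an equivalence. Since the Borel functor $\Spectra^{BC_2} \to \Spectra^{C_2}$ is the right adjoint to the underlying-spectrum functor and is therefore fully faithful with essential image cut out precisely by this kind of comparison being an equivalence, this shows that $KU_\R$ lies in the Borel subcategory already at the level of $C_2$-spectra. Because the Borel functor is lax symmetric monoidal (indeed symmetric monoidal, being induced by a symmetric monoidal localization), it promotes to a fully faithful embedding on $\E_\infty$-algebras whose essential image is detected on underlying spectra. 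Hence $KU_\R$ is a Borel $\E_\infty$-ring in $\Spectra^{C_2}$, and Proposition~\ref{prop:Borel_are_normed} produces the essentially unique $C_2$-$\E_\infty$ refinement.

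For $K_\R(A)$, the argument is identical once the assumption is unpacked: the statement that $A$ satisfies the homotopy limit problem means that the canonical map $K_\R(A)^{C_2} \to K(A^e)^{hC_2}$ is an equivalence, and hence $K_\R(A)$ is Borel as a $C_2$-spectrum. The same lax symmetric monoidality of the Borel embedding upgrades this to the statement that $K_\R(A)$ is Borel as an $\E_\infty$-algebra in $\Spectra^{C_2}$. Applying Proposition~\ref{prop:Borel_are_normed} yields the essentially unique refinement, completing the proof.

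The only nontrivial step is the upgrading of Borelness from the level of underlying $C_2$-spectra to the level of $\E_\infty$-algebras, and I would expect this to be the main place to be careful. The cleanest way to handle it is to observe that the adjunction defining the Borel functor is a smashing localization with respect to $EC_{2,+}$, so it is symmetric monoidal and the essential image on $\E_\infty$-algebras is created by the underlying functor; alternatively one can note that both $KU_\R$ and $K_\R(A)$ are obtained from their underlying spectra with $C_2$-action by a construction (fixed points of a cotensor with $\widetilde{E}C_2$, or equivalent) that visibly factors through the Borel embedding as an $\E_\infty$-ring map. Either way, once Borelness is established multiplicatively, Proposition~\ref{prop:Borel_are_normed} immediately yields both existence and uniqueness.
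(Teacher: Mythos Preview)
Your proposal is correct and follows essentially the same route as the paper: verify that $KU_\R$ (via Atiyah) and $K_\R(A)$ (by hypothesis) are Borel, then invoke Proposition~\ref{prop:Borel_are_normed}. Your additional concern about ``upgrading Borelness'' to the $\E_\infty$-level is moot in the paper's setup, since $\E_\infty\Alg^{\Borel}(\Spectra^{C_p})$ is by definition the pullback of $\E_\infty\Alg(\Spectra^{C_p})$ along the inclusion $\Spectra^{C_p,\Borel}\subset\Spectra^{C_p}$, so Borelness is detected on underlying spectra and no further argument is required.
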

A slightly less trivial class of examples are provided by the following
\begin{cor} [Proposition \ref{prop:norms_are_normed}]
	Let $ B \in \E_\infty\Alg (\Spectra) $ be an $ \E_\infty $-algebra. 
	Then $ N^{C_p}B $ admits a canonical structure of a $ C_p $-$ \E_\infty $-algebra. 
\end{cor}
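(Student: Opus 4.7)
My plan is to apply Theorem \ref{thm:mainthm}, which reduces the task to exhibiting $A := N^{C_p}B$ as a normed $\E_\infty$-algebra in the sense of Definition \ref{defn:normedrings}. This requires three pieces of data.

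First, I would equip $A$ with its underlying $\E_\infty$-algebra structure in $\Spectra^{C_p}$: since the HHR norm $N^{C_p} \colon \Spectra \to \Spectra^{C_p}$ is symmetric monoidal, it lifts to a functor on $\E_\infty$-algebras, and $A^e = B^{\otimes p}$ with the cyclic permutation $C_p$-action. Second, I would construct the norm map $n_A \colon N^{C_p}(A^e_{hC_p}) \to A$: the $p$-fold multiplication $\mu \colon B^{\otimes p} \to B$ is $C_p$-invariant with trivial action on the target (since $B$ is $\E_\infty$), so by the universal property of $(-)_{hC_p}$ — together with its symmetric monoidality as a left adjoint to the trivial action functor — $\mu$ descends to an $\E_\infty$-algebra map $\bar\mu \colon A^e_{hC_p} \to B$, and applying $N^{C_p}$ gives the candidate $n_A := N^{C_p}(\bar\mu)$.

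The hard part will be verifying the coherence condition. This amounts to identifying two maps $(A^e)^{\otimes p} \to A^e$ in $\EooAlg(\Spectra)^{BC_p}$: one via $n_A^e$ composed with the canonical quotient $(A^e)^{\otimes p} \to (A^e_{hC_p})^{\otimes p}$, and one via the multiplication $m_A$ composed with the equivalence $\id \otimes \sigma \otimes \cdots \otimes \sigma^{p-1}$ of Observation \ref{obs:clarifying_actions}. Both maps decompose into combinations of the multiplication of $B$ acting on different groupings of tensor factors; the plan is to show they agree by carefully tracking the interaction of the transposition, diagonal, and cyclic Borel $C_p$-actions with the symmetric monoidalities of $N^{C_p}$ and $(-)_{hC_p}$, invoking the coherent commutativity of $B$ to reconcile the different groupings. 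The bookkeeping here is delicate, and this is the step I anticipate being most fiddly.

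If the direct verification proves unwieldy, a cleaner route is to establish the universal property characterizing $N^{C_p}B$ as the free $C_p$-$\E_\infty$-algebra on $B \in \EooAlg(\Spectra)$, namely an adjunction $\Map_{C_p\text{-}\EooAlg(\Spectra^{C_p})}(N^{C_p}B, R) \simeq \Map_{\EooAlg(\Spectra)}(B, R^e)$ natural in the $C_p$-$\E_\infty$-algebra $R$. This adjunction can be established using the free normed algebra formula (Theorem \ref{theorem:freenormedalg_formula}) in combination with the equivalence of Theorem \ref{thm:mainthm}; the required normed algebra structure on $N^{C_p}B$ then emerges from the identity map $B \to B$ via the unit, bypassing the explicit coherence check.
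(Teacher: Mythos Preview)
Your proposal is correct in spirit but takes a more laborious route than the paper. You work directly with Definition~\ref{defn:normedrings}, constructing the norm map $n_A$ by hand and then confronting the coherence diagram in $\E_\infty\Alg(\Spectra)^{BC_p}$, which you rightly flag as fiddly. The paper instead invokes the recollement reformulation of Remark~\ref{rmk:CpEinftyringdata} (equivalently Proposition~\ref{prop:normedrings_alt}): a normed structure on $A$ is the datum of a dotted lift in the square~(\ref{eq:tatefrobenius_lift}). For $A = N^{C_p}B$ one has $A^{\varphi C_p} \simeq B$, $A^e \simeq B^{\otimes p}$, and the structure map $s_A \colon A^{\varphi C_p} \to A^{tC_p}$ is precisely the Tate diagonal $B \to (B^{\otimes p})^{tC_p}$. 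The square to fill becomes
\[
\begin{tikzcd}
B^{\otimes p} \ar[d,"\Delta"'] \ar[r,dotted] & B \ar[d,"\Delta"] \\
\bigl(B^{\otimes p^2}\bigr)^{tC_p} \ar[r,"{m^{tC_p}}"] & (B^{\otimes p})^{tC_p}
\end{tikzcd}
\]
and one simply takes the dotted arrow to be $m_B$; commutativity is immediate from naturality of the Tate diagonal applied to $m_B \colon B^{\otimes p} \to B$. This bypasses both the explicit construction of $n_A$ via $(-)_{hC_p}$ and the bookkeeping of the shear equivalence entirely.

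Your alternative route via a universal property is plausible but would amount to proving a new adjunction $N^{C_p} \dashv (-)^e$ at the level of $C_p$-$\E_\infty$-algebras, which the paper does not establish and which is strictly more than what is needed here.
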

\begin{rmk}
	Real motivic invariants and their associated real trace theories provided the impetus for this work. 
	In particular, Theorem \ref{thm:constMackey_is_normed} will be used in the author's upcoming work on real trace theories.
\end{rmk}

\subsection{Outline}
Despite the nice intuitive picture outlined in \S\ref{subsection:motivation}, handling the higher coherence conditions associated to $ n $ gets complicated quickly. 
Thus our proof strategy does not appeal directly to understanding the operadic indexing category (although we will need some understanding of this to write down a comparison functor). 

In \S\ref{section:background}, we collect background on genuine equivariant homotopy theory, as well as the parametrized $ \infty $-categorical perspective on equivariant algebras. 
In \S\ref{section:normedrings}, we define normed rings. 
In \S\ref{subsection:comparison_functor}, we define a comparison functor from parametrized algebras to normed rings. 
In \S\ref{subsection:param_monoidal_envelope}, we exhibit a formula for the free $ C_p $-$ \E_\infty $-algebra on an $ \E_\infty $-algebra. 
In \S\ref{subsection:mainthm_proof}, we show that the free $ C_p $-$ \E_\infty $-algebra on an $ \E_\infty $-algebra also computes the free \emph{normed algebra}, and conclude by the Barr--Beck--Lurie theorem. 
In \S\ref{section:app_ex}, we look into a few examples and applications. 

\subsection{Background, Notation, \& Conventions} 
We assume some familiarity with the language of $ \infty $-categories (in the form of quasi-categories) as introduced by Joyal \cite{joyal-qc} and developed in \cite{LurHTT}. 
All categories are understood to be $ \infty $-categories unless otherwise specified. 
We do a cursory review of the theory of parametrized $ \infty $-categories as developed by Barwick, Dotto, Glasman, Nardin, and Shah \cite{BDGNSintro,BDGNS1,BDGNS9,Nardinthesis,Shah18}, but the reader should consult the former references for more details. 
We will assume some familiarity with the $ \infty $-operads of \cite[Chapters 2 \& 3]{LurHA}, which we will compare to the parametrized algebras of \cite{NS22}. 

To reduce visual clutter, we regularly drop subscripts such as a prime $ p $ or a ($ C_p $-)$ \E_\infty $-algebra $ A $ when they are understood to be fixed (e.g. within the proof of a particular proposition). 

\subsection{Acknowledgements} The author is indebted to Denis Nardin and Jay Shah for numerous enlightening conversations and for, in collaboration Clark Barwick, Emanuele Dotto, and Saul Glasman, setting up the foundations of parametrized $ \infty $-categories.
The author would like to thank Elden Elmanto and Noah Riggenbach for helpful conversations, and Andrew Blumberg and Elden Elmanto for feedback on an early draft. 
The author was supported by a NSF Graduate Research Fellowship under Grant No. DGE 2140743 during the completion of this work. 

\section{Background}\label{section:background}
We collect some background on genuine equivariant homotopy theory and parametrized $ \infty $-categories here. 
In \S\ref{subsection:param} and \S\ref{subsection:param_alg}, we recall the parametrized $ \infty $-categorical language and parametrized algebras, resp. of Barwick--Dotto--Glasman--Nardin--Shah. 
In \S\ref{subsection:background_geneqvt}, we collect background and structural results on the $ C_p $-genuine equivariant category. 

\subsection{Parametrized \texorpdfstring{$ \infty $}{∞}-categories}\label{subsection:param}
Let $ G $ be a finite group.  
\begin{recollection}\label{rec:orbit_cat}
	The orbit category $ \mathcal{O}_G $ is the category with objects finite transitive $ G $-sets and morphisms $ G $-equivariant maps. 
	We let $ \Fin_G $ denote the finite coproduct completion of $ \mathcal{O}_G $, i.e. the category of finite $ G $-sets and $ G $-equivariant maps. 
	We recall that $ \mathcal{O}^\op_G $ is an \emph{orbital} $ \infty $-category in the sense of Definition 1.2 of \cite{Nardinthesis}. 
\end{recollection}
\begin{defn}
	(\cites[between Examples 1.3 \& 1.4]{Nardinthesis}[Definition 1.3]{BDGNS1}) A \emph{$ G $-$ \infty $-category} is a cocartesian fibration $ \cat \to \mathcal{O}_G^\op $. 

	\cite[beginning of \S1.2]{Nardinthesis} A morphism of $ G $-$ \infty $-categories is a functor $ F $ of $ \infty $-categories over $ \mathcal{O}_G^\op $:
	\begin{equation*}
	\begin{tikzcd}[column sep=tiny,row sep=small]
		\cat \ar[rd,"p"'] \ar[rr,"F"] & & \mathcal{D} \ar[ld,"q"] \\
		& \mathcal{O}_G^\op &
	\end{tikzcd}
	\end{equation*}
	which takes $ p $-cocartesian arrows in $ \cat $ to $ q $-cocartesian arrows in $ \mathcal{D} $. 
	We denote the category of $ G $-functors by $ \Fun_{G}(\cat, \mathcal{D}) $. 
\end{defn}
\begin{rmk}
	\cites[\S3.2.2]{LurHTT}[Example 2.5]{Shah18} \label{rmk:param_unstraighten}
	Let $ \Cat_\infty $ denote the large $ \infty $-category of small $ \infty $-categories. 
	There is a universal cocartesian fibration $ \mathcal{U} \to \Cat_\infty $ such that pullback induces an equivalence
	\begin{equation*}
		\Fun(\mathcal{O}_G^\op, \Cat_\infty) \simeq \Cat_{\infty/\mathcal{O}_G^\op}^{\mathrm{cocart}} .
	\end{equation*}
	Unraveling definitions and taking $ G = C_p $, a $ C_p $-$ \infty $-category is the data of 
	\begin{itemize}
		\item an $ \infty $-category $ \cat^{C_p} $,
		\item an $ \infty $-category with $ C_p $-action $ \cat^{e} $, and 
		\item a functor $ \cat^{C_p} \to \cat^e $ which lifts along the $ C_p $ homotopy fixed points $ (\cat^e)^{hC_p} \to \cat^e $. 
		In particular, if $ \cat^e $ is endowed with the trivial $ C_p $-action, then $ (\cat^e)^{hC_p} \simeq (\cat^e)^{BC_p} \simeq \Fun(BC_p , \cat^e) $ comprises objects in $ \cat^e $ with (naïve) $ C_p $-action.
	\end{itemize}
	In particular, we see that a cocartesian section $ \sigma \colon \mathcal{O}^\op_{C_p} \to \cat $ is determined by its value on $ \sigma(C_p/C_p) $. 
	Informally, we regard the category of cocartesian sections of $ \cat $ as the category of objects in $ \cat $. 
\end{rmk}
\begin{ntn}
	Going forward, we use the notation $ \mathcal{T} $ for $ \mathcal{O}_{C_p}^\op $ to reduce notational clutter. 
	While most of the general theory in \S\ref{subsection:param} and \S\ref{subsection:param_alg} applies to $ \mathcal{T} $ a general atomic orbital $ \infty $-category, we will not need this level of generality to formulate our main results. 
\end{ntn}
There is an (internal to $ \mathcal{T} $-parametrized categories) version of functor categories. 
The notion of \emph{parametrized functor categories} of \cite[\S3]{Shah18} will be necessary to our investigation of parametrized colimits. 
\begin{prop} \cites[Proposition 3.1]{Shah18}[Construction 5.2]{BDGNS1}\label{prop:param_functors}
	Let $ \cat \to \mathcal{T}^\op $, $ \mathcal{D} \to \mathcal{T}^\op $ be cocartesian fibrations. 
	Then there exists a cocartesian fibration $ \underline{\Fun}(\cat, \mathcal{D}) \to \mathcal{T}^\op $ such that under the straightening-unstraightening equivalence of Remark \ref{rmk:param_unstraighten}, $ \underline{\Fun}(\cat, \mathcal{D}) $ represents the presheaf
	\begin{equation*}
		\mathcal{E} \mapsto \hom_{\mathcal{T}^\op}\left(\mathcal{E} \times_{\mathcal{T}^\op} \cat, \mathcal{D} \right).
	\end{equation*}
\end{prop}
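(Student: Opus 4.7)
The plan is to reduce the statement to the existence of internal homs in a cartesian closed presheaf $ \infty $-category, using the straightening--unstraightening equivalence recalled in Remark \ref{rmk:param_unstraighten}. First, I would pass through this equivalence: let $ F_\cat, F_\mathcal{D} \colon \mathcal{T}^\op \to \Cat_\infty $ be the straightenings of the cocartesian fibrations $ \cat, \mathcal{D} $, and observe that straightening carries the fiber product $ - \times_{\mathcal{T}^\op} \cat $ on $ \Cat_{\infty/\mathcal{T}^\op}^{\mathrm{cocart}} $ to the pointwise cartesian product $ - \times F_\cat $ on $ \Fun(\mathcal{T}^\op, \Cat_\infty) $. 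In these terms, representing the displayed presheaf amounts to exhibiting $ F_{\underline{\Fun}} \in \Fun(\mathcal{T}^\op, \Cat_\infty) $ together with a natural equivalence
\begin{equation*}
\Map(F_\mathcal{E}, F_{\underline{\Fun}}) \simeq \Map(F_\mathcal{E} \times F_\cat, F_\mathcal{D}).
\end{equation*}

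Second, I would produce $ F_{\underline{\Fun}} $ as the internal hom $ [F_\cat, F_\mathcal{D}] $ in $ \Fun(\mathcal{T}^\op, \Cat_\infty) $ with respect to the pointwise cartesian symmetric monoidal structure. Since $ \Cat_\infty $ is cartesian closed (with internal hom $ \Fun(-, -) $), presheaves valued in it inherit cartesian closure, so the internal hom exists. Then define $ \underline{\Fun}(\cat, \mathcal{D}) \to \mathcal{T}^\op $ to be its unstraightening; by construction, it satisfies the required universal property after transporting back across the equivalence of Remark \ref{rmk:param_unstraighten}.

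Third, I would identify the fibers. For $ t \in \mathcal{T}^\op $, the corepresentable functor $ \text{よ}(t) = \Map_{\mathcal{T}^\op}(t, -) $ straightens the right fibration $ \mathcal{T}^\op_{/t} \to \mathcal{T}^\op $, so by the enriched Yoneda lemma one computes
\begin{equation*}
F_{\underline{\Fun}}(t) \simeq \underline{\Map}\bigl(\text{よ}(t) \times F_\cat,\, F_\mathcal{D}\bigr) \simeq \Fun_{\mathcal{T}^\op}^{\mathrm{cocart}}\bigl(\mathcal{T}^\op_{/t} \times_{\mathcal{T}^\op} \cat,\, \mathcal{D}\bigr),
\end{equation*}
where the last step is straightening once more. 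This gives the expected description of the fibers, and inspection of cocartesian edges (which are constructed using the functoriality of the fiber in $ t $) identifies them with fiberwise equivalences plus compatibility data.

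The main subtle point is making sure we work with the $ \Cat_\infty $-valued, not the space-valued, internal hom: we need $ \underline{\Fun}(\cat, \mathcal{D}) $ to be a cocartesian fibration with \emph{categorical} fibers rather than a right fibration, so it is crucial that we use the self-enrichment of $ \Cat_\infty $ and transport this through the straightening equivalence. A related subtlety is that straightening must be shown to intertwine $ \times_{\mathcal{T}^\op} $ with pointwise product, which is standard but requires a symmetric-monoidal upgrade of the straightening equivalence. An alternative, more explicit route—closer in spirit to \cite[\S3.2.2]{LurHTT}—is to build $ \underline{\Fun}(\cat, \mathcal{D}) $ directly as a marked simplicial set over $ \mathcal{T}^\op $ whose $ n $-simplices over $ \sigma \colon \Delta^n \to \mathcal{T}^\op $ are marked maps $ \cat \times_{\mathcal{T}^\op} \Delta^n \to \mathcal{D} $ over $ \mathcal{T}^\op $, and to verify the cocartesian fibration condition using the adjunction with $ - \times_{\mathcal{T}^\op} \cat $ in the cocartesian model structure; the identification of fibers then comes for free from the universal property.
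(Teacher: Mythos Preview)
The paper does not prove this proposition: it is stated with attribution to \cite[Proposition 3.1]{Shah18} and \cite[Construction 5.2]{BDGNS1} and is treated as background. So there is no proof in the paper to compare against.

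That said, your argument is a valid way to establish the result. The reduction via straightening to internal homs in $ \Fun(\mathcal{T}^\op, \Cat_\infty) $ is sound, and you correctly flag the one nontrivial input, namely that straightening is symmetric monoidal for fiber product over the base versus pointwise product. The alternative you sketch at the end---building $ \underline{\Fun}(\cat, \mathcal{D}) $ directly as a marked simplicial set and appealing to the cocartesian model structure---is in fact closer to how the cited references proceed: Shah's \cite{Shah18} works throughout in the marked-simplicial model, and the parametrized functor category is constructed there by an explicit marked mapping-simplicial-set formula rather than by abstract cartesian closure. Your first approach is cleaner conceptually but relies on the symmetric monoidal upgrade of straightening as a black box; the model-categorical route is more hands-on but self-contained.
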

Notice that an object of $ \underline{\Fun}(\cat, \mathcal{D}) $ over $ t \in \mathcal{T} $ is a $ (\mathcal{T}^\op)_{t/} $-functor
\begin{equation*}
	(\mathcal{T}^\op)_{t/} \times_{\mathcal{T}^\op} \cat \to (\mathcal{T}^\op)_{t/} \times_{\mathcal{T}^\op} \mathcal{D} .
\end{equation*}
\begin{construction}
	[$\mathcal{T}$-category of objects] \label{cons:Tcat_of_objects}
	\cite[Definition 7.4]{BDGNS1} 
	Let $ E $ be a (non-parametrized) $ \infty $-category. 
	The product $ E \times \mathcal{T}^\op $ may be regarded as a $ \mathcal{T} $-$ \infty $-category via projection onto the second factor. 
	Evaluation at the source exhibits the (non-parametrized) functor category $ \Fun(\Delta^1, \mathcal{T}^\op) \xrightarrow{\ev_0} \mathcal{T}^\op  $ as a cartesian fibration. 
	The parametrized functor category of \cite[Recollection 5.1]{BDGNS1}
	\begin{equation*}
		\underline{E}_{\mathcal{T}} := \underline{\Fun}_{\mathcal{T}^\op}\left(\Fun(\Delta^1, \mathcal{T}^\op), E \times \mathcal{T}^\op \right)
	\end{equation*}
	is the \emph{$ \mathcal{T} $-$ \infty$-category of $ \mathcal{T} $-objects in $ E $}. 
\end{construction}
\begin{theorem}\label{thm:Tcat_of_objects_cofree}
	\cite[Theorem 7.8]{BDGNS1} 
	Let $ \cat $ be a $ \mathcal{T} $-$ \infty $-category. 
	Let $ \mathcal{D} $ be an $ \infty $-category. 
	Then the $ \mathcal{T} $-category of objects of Construction \ref{cons:Tcat_of_objects} satisfies
	\begin{equation*}
		\Fun_{\mathcal{T}^\op}(\cat, \underline{\mathcal{D}}) \simeq \Fun(\cat, \mathcal{D}).
	\end{equation*}
\end{theorem}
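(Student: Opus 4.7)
The plan is to verify the stated universal property by unwinding the construction of $\underline{\mathcal{D}}$ via the universal property of the parametrized functor category (Proposition \ref{prop:param_functors}) and then reducing the resulting fiber product to $\cat$ via a localization/cofinality argument.

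First I would apply Proposition \ref{prop:param_functors} to the definition in Construction \ref{cons:Tcat_of_objects}, yielding
\[
\Fun_{\mathcal{T}^\op}(\cat, \underline{\mathcal{D}}) \;\simeq\; \Fun_{\mathcal{T}^\op}\!\left(\cat \times_{\mathcal{T}^\op} \Fun(\Delta^1, \mathcal{T}^\op),\; \mathcal{D} \times \mathcal{T}^\op\right).
\]
Because the target is a constant cocartesian fibration, whose cocartesian edges over $\mathcal{T}^\op$ are precisely those of the form $(\id_{\mathcal{D}}, \alpha)$, unwinding the $\mathcal{T}^\op$-functor condition reduces the right-hand side to the space of ordinary functors from the fiber product into $\mathcal{D}$ that invert the cocartesian edges over $\mathcal{T}^\op$.

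The crucial remaining step is to identify the localization of $\cat \times_{\mathcal{T}^\op} \Fun(\Delta^1, \mathcal{T}^\op)$ at its cocartesian edges with $\cat$ itself. The projection $\pi$ onto the first factor admits a canonical section $s \colon \cat \to \cat \times_{\mathcal{T}^\op} \Fun(\Delta^1, \mathcal{T}^\op)$ picking out $(c, \id_{p(c)})$ in each fiber. The identity $\id_{p(c)}$ is a distinguished (co)terminal object of the fiber of $\Fun(\Delta^1, \mathcal{T}^\op) \to \mathcal{T}^\op$ over $p(c)$, and a direct analysis shows that each object $(c, f \colon t \to p(c))$ is connected to a point in the image of $s$ by a zig-zag built from a cocartesian edge (coming from the map $f$ together with the associated cocartesian lift $c \to f_!(c)$ in $\cat$) composed with a fiberwise morphism into the identity. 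Hence restriction along $s$ realizes the universal inversion, completing the proof.

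The main obstacle is a clean implementation of the localization step. One route is to exhibit $s$ as left adjoint to $\pi$, with unit given by the canonical morphism into $\id_{p(c)}$ in each fiber; the counit $\pi \circ s \simeq \id_\cat$ is then tautologically an equivalence, which by general nonsense about reflective localizations forces restriction along $s$ to invert precisely the cocartesian edges. Alternatively, using the marked simplicial set formalism of \cite[\S3.1]{LurHTT}, one may verify directly that $\pi$ (equipping source and target with their cocartesian edges) is a marked anodyne equivalence; this reduces to the elementary observation that each fiber of $\Fun(\Delta^1, \mathcal{T}^\op) \to \mathcal{T}^\op$ deformation retracts onto the identity morphism.
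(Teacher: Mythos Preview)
The paper does not supply its own proof of this statement; it is quoted without argument from \cite[Theorem~7.8]{BDGNS1}, so there is no proof in the present paper to compare your attempt against.

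Your overall strategy---unwind the construction of $\underline{\mathcal{D}}$ via the representing property of the parametrized functor category, then contract the resulting fiber product onto $\cat$ by an adjunction/localization argument---is the natural one and is in the spirit of the argument in \cite{BDGNS1}. There is, however, a genuine gap in your first step as written. Proposition~\ref{prop:param_functors}, as stated in this paper, requires both arguments of $\underline{\Fun}$ to be \emph{cocartesian} fibrations over $\mathcal{T}^\op$, but $\ev_0 \colon \Fun(\Delta^1,\mathcal{T}^\op) \to \mathcal{T}^\op$ is only a \emph{cartesian} fibration (as Construction~\ref{cons:Tcat_of_objects} itself notes). Consequently the fiber product $\cat \times_{\mathcal{T}^\op} \Fun(\Delta^1,\mathcal{T}^\op)$, taken along the common projection, need not be a cocartesian fibration, and it is not a priori meaningful to speak of $\Fun_{\mathcal{T}^\op}$ out of it or of its ``cocartesian edges over $\mathcal{T}^\op$'' in the sense your second paragraph invokes. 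The construction and universal property in \cite{BDGNS1} are formulated more carefully (using marked simplicial sets, with the relevant marking on the arrow category coming from its cartesian edges), and once one tracks the correct markings your section $s$, the fiberwise initiality of $\id_{p(c)}$, and the marked-anodyne contraction do combine to give the result. But you cannot reach that point by a literal appeal to Proposition~\ref{prop:param_functors}; you must either invoke the more general formulation in \cite{BDGNS1} or rework the first step to account for the mixed variance.
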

\begin{ex}
	Taking $ E = \Spc $ and $ \cat = \mathcal{T}^\op $ in Theorem \ref{thm:Tcat_of_objects_cofree}, we see that cocartesian sections of $ \underline{\Spc}_{\mathcal{T}} $ correspond exactly to $ \Fun(\mathcal{T}^\op, \Spc) $.
\end{ex}

We will need to know what a $ G $-left Kan extension is. 
In service of keeping the background section brief, we take Remark 10.2(3) of \cite{Shah18}, which is equivalent to Definition 10.1 of \emph{loc.cit}.
\begin{ntn}\label{ntn:param_fiber}
	\cite[Notation 2.29]{Shah18} 
	Let $ p: \mathcal{D} \to \mathcal{T}^\op $ be a $ \mathcal{T} $-$\infty $-category. 
	Given an object $ x \in \mathcal{D} $, define
	\begin{equation*}
		\underline{x} := \{x\} \times_{\mathcal{D}} \Ar^{cocart}(\mathcal{D}) .
	\end{equation*}
	Given a $ \mathcal{T} $-functor $ \psi: \cat \to \mathcal{D} $, define the \emph{parametrized fiber of $ \psi $ over $ x \in \mathcal{D}$} to be
	\begin{equation*}
		\cat_{\underline{x}} := \underline{x} \fiberproduct_{\mathcal{D}, \psi} \cat .
	\end{equation*}
	Observe that $ \cat_{\underline{x}} $ may be naturally regarded as a $ \left(\mathcal{T}^\op\right)^{p(x)/-} $-category. 
\end{ntn}
\begin{defn}
	\cite[Remark 10.2(3)]{Shah18} \label{defn:param_Kan_extension}
	Suppose given a diagram of $ \mathcal{T} $-$ \infty $-categories
	\begin{equation*}
	\begin{tikzcd}
		C \ar[d,"\psi"']  \ar[r,"F"] & E \\
		D \ar[ru,"{}"{name=E}, bend right=20,"G"'] & \ar[Rightarrow,from=1-1, to=E,"\eta"]
	\end{tikzcd}.
	\end{equation*}
	We say that $ G $ is a \emph{left $ \mathcal{T} $-Kan extension of $ F $ along $ \psi $} if for all $ t \in \mathcal{T} $ and all $ x \in D_{t} $, $ \left.G\right|_{\underline{x}} $ is a left $( \mathcal{T}^{\op})^{t/} $-Kan extension of $ \left. F\right|_{\underline{x}} \colon C_{\underline{x}} \to E_{\underline{t}} $ along $ \psi_{\underline{x}} $. 
\end{defn}

\subsection{Genuine equivariant homotopy theory}\label{subsection:background_geneqvt}
In this section, we introduce the stable $ C_p $-genuine equivariant category, discuss a parametrized lift (Example \ref{ex:param_Gspectra}) and give an alternative presentation (Proposition \ref{prop:eqvtspectrarecollement}) which will be useful to our study of algebras.  
Finally, we recall the Hill--Hopkins--Ravenel norms. 
\begin{prop} \label{prop:spancatcoherence}
	Let $ G $ be a finite group. 
	Then there exists an $ \infty $-category $ \Span\left(\Fin_G\right) $ having
	\begin{itemize} 
		\item the same objects as $ \Fin_G $
		\item homotopy classes of morphisms from $ V $ to $ U $ in $ \Span(\Fin_G) $ are in bijection with diagrams $ {V \leftarrow T \to U} $ up to isomorphism of diagrams fixing $ V $ and $ U $. 
		\item The composite of $ V \leftarrow T \to U $ and $ U \leftarrow S \to W $ is equivalent to the diagram $ V \leftarrow T \times_U S \to W $.
	\end{itemize}
	Moreover, $ \Span(\Fin_G) $ is semiadditive, i.e. finite coproducts and products are isomorphic, and are given on underlying $ G $-sets by the disjoint union. 
\end{prop}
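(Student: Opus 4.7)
The plan is to reduce to a general construction of effective Burnside $\infty$-categories: for any (ordinary or $\infty$-)category $\mathcal{C}$ admitting pullbacks, there is an $\infty$-category $\Span(\mathcal{C})$ whose objects are those of $\mathcal{C}$, whose $n$-simplices are diagrams on the twisted arrow category $\mathrm{Tw}([n])$ valued in $\mathcal{C}$ carrying certain squares to pullback squares, and in which composition of 1-simplices is witnessed by a choice of pullback. This is Barwick's construction (see e.g. the effective Burnside category of Barwick's spectral Mackey functor papers, or Haugseng's account); we invoke it as a black box, so that the main task is to verify its input hypotheses and to read off the three bullet points from the definition.

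First I would check that $\Fin_G$ admits all finite limits: the forgetful functor $\Fin_G \to \Fin$ creates them, since the pullback of $G$-sets is the pullback of underlying sets equipped with the diagonal $G$-action, and this is again finite. Applying Barwick's construction to $\Fin_G$ produces $\Span(\Fin_G)$. The objects are tautologically those of $\Fin_G$. The 1-simplices are diagrams $V \leftarrow T \to U$ in $\Fin_G$, and the mapping groupoid is identified with the nerve of the groupoid of such spans and isomorphisms between them which fix the legs; passing to $\pi_0$ yields the claimed description of homotopy classes. The composition of $V \leftarrow T \to U$ with $U \leftarrow S \to W$ is then encoded by a 2-simplex in which the middle square is a pullback, which by definition is $V \leftarrow T\times_U S \to W$.

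For semiadditivity, the strategy is to exhibit the disjoint union $V \sqcup U$ in $\Fin_G$ as both a product and a coproduct in $\Span(\Fin_G)$. The injections $V \to V \sqcup U$ and $U \to V \sqcup U$ (viewed as spans with identity left leg) serve as coproduct inclusions, and the same maps reversed (identity right leg) serve as product projections. The unit and counit of the resulting biproduct structure are then constructed from the spans $\emptyset \leftarrow \emptyset \to X$ and $X \leftarrow X \xrightarrow{\Delta} X \sqcup X$ together with their duals. What needs to be checked is the zigzag identities and the fact that addition of parallel morphisms (defined via the biproduct) is well-behaved; this follows from the fact that $\Fin_G$ is extensive, i.e.\ disjoint unions distribute over pullbacks. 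This is the classical fact that $\Span$ of an extensive category with pullbacks is semiadditive; I would cite the relevant statement (e.g.\ from Barwick or Harpaz) rather than reprove it.

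The main obstacle in principle is the coherence of composition in $\Span(\Fin_G)$, which is why one needs an $\infty$-categorical (rather than bicategorical) framework for the span construction; this is precisely what Barwick's twisted arrow description is designed to handle, so the serious work has been done elsewhere. Beyond that, the semiadditivity argument is essentially formal once extensivity is observed, and the description of $\pi_0$ of mapping spaces is immediate from the definition.
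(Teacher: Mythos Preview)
Your proposal is correct and essentially matches the paper's approach: both invoke Barwick's effective Burnside construction for $\Span(\Fin_G)$ and then cite an external result for semiadditivity. The only minor difference is in the semiadditivity step: you argue via extensivity of $\Fin_G$ and cite the general fact that spans of an extensive category are semiadditive, whereas the paper observes that $\Span(\Fin_G)$ is a module over $\Span(\Fin)$ and invokes a result (Corollary 3.19 of \cite{MR4133704}) that modules over a $0$-semiadditive category are $0$-semiadditive. Both are one-line citations and neither is more elementary than the other.
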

\begin{proof}
	The construction of $ \Span(\Fin_G) $ is \cite[Proposition 5.6]{BarwickMackey} applied to \cite[Example 5.4]{BarwickMackey}. 
	The (0-)semiadditivity of $ \Span(\Fin_G) $ follows from noticing that $ \Span(\Fin_G) $ is a module over $ \Span(\Fin) $ and \cite[Corollary 3.19]{MR4133704}. 
\end{proof}
The notion of a Mackey functor first appeared in \cite{MR0360771} in algebra and in \cite{MR1413302} in homotopy theory; the following $ \infty $-categorical version of the definition is taken from \cite[\S2.3]{Nardinthesis}. 
\begin{defn}  
	Let $ G $ be a finite group and let $ \Span(\Fin_G) $ be the span category of Proposition \ref{prop:spancatcoherence}. 
	Let $ \cat $ be a category which admits finite products. 
	Then the category of $ \cat $-valued \emph{$ G $-Mackey functors} is given by 
	\begin{equation*} 
		\Mack_G(\cat) := \Fun^\Sigma(\Span(\Fin_{G}), \cat) 
	\end{equation*}
	where the right-hand side denotes the full subcategory on functors which take direct sums in $ \Span(\Fin_{G}) $ to products in $ \cat $. 
	We will denote the category of \emph{genuine equivariant $ G $-spectra} by $ \Spectra^G = \Mack_G(\Spectra) $. 
\end{defn}
We identify the theory of orthogonal $ G $-spectra (where weak equivalences are detected levelwise) with $ G $-spectral Mackey functors via the equivalence established in \cite[\S3]{GuillouMay}. 
\begin{recollection} [Smash product of $ G $-Mackey functors]
	The category $ \Span(\Fin_{G}) $ inherits a symmetric monoidal structure from $ \Fin_{G} $ given on underlying objects by cartesian product of finite $ G $-sets \cite[Proposition 2.9]{BarwickGlasmanShah}. 
	Suppose that $ \cat $ has a presentably symmetric monoidal structure\footnote{That is, the tensor product commutes with (small) colimits separately in each variable.} $ \otimes $. 
	Then we can equip $ \Mack_G(\cat) = \Fun^\Sigma(\Span(\Fin_{G}), \cat) $ with a symmetric monoidal structure given by Day convolution \cite[Proposition 2.11]{MR3601070}.
	When we take $ \cat = \Spectra $ and the symmetric monoidal structure to be the smash product on spectra, this recovers the usual smash product of $ G $-spectra. 
\end{recollection}\label{rec:eqvtsmashproduct}
The $ \infty $-category of $ G $-Mackey functors in spectra is equivalent to the category of cocartesian sections of a $ G $-parametrized $ \infty $-category. 
\begin{ex}\label{ex:param_Gspectra}
	The $ G $-$ \infty $-category of $ G $-spectra $ \underline{\Spectra}^G $ is \cite[Definition 7.3 \& Corollary 7.4.1]{BDGNS4} applied to $ D = \underline{\Spc}^G $. 
\end{ex}

There is an alternative way of understanding $ \Mack_{C_p}(\cat) $ as a recollement \emph{when $ \cat $ is stable} and admits $ BC_p $-shaped colimits. 
The following is \cite[Theorem 6.24]{MNN}. 
\begin{prop}\label{prop:eqvtspectrarecollement} 
	There is an equivalence of stable $ \infty $-categories
	\begin{equation*}
	\begin{split}
		&\Spectra^{C_p} = \Mack_{C_p}(\Spectra) \to \Spectra^{BC_p} \times_{\Spectra} Ar(\Spectra) \\
		&X \mapsto \left(X^e, \cofib\left((X^e)_{hC_p} \xrightarrow{\tr} X^{C_p}\right) \to (X^e)^{tC_p} \right)
	\end{split}
	\end{equation*}  
	where the map $ Ar(\Spectra) \to \Spectra $ is evaluation at the target. 
	We call $  X^{\varphi{C_p}} := \cofib\left((X^e)_{hC_p} \xrightarrow{\tr} X^{C_p}\right) $ the \emph{$C_p$-geometric fixed points} of $ X $. 
\end{prop}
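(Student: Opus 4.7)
The plan is to realize the stated equivalence as the canonical reconstruction of a stable $\infty$-category from a stable recollement. Specifically, I would build the recollement on $\Spectra^{C_p}$ associated to the cofiber sequence of pointed $C_p$-spaces $EC_{p+} \to S^0 \to \widetilde{E}C_p$, identify its two endpoints with $\Spectra^{BC_p}$ and $\Spectra$, and then invoke the general fact that a stable recollement of $\mathcal{C}$ by $(\mathcal{U},\mathcal{Z})$ with gluing functor $\Phi = i^{*}j_{*}\colon \mathcal{U}\to \mathcal{Z}$ yields a canonical equivalence $\mathcal{C} \simeq \mathcal{U} \times_{\mathcal{Z}} \Ar(\mathcal{Z})$, where $\Ar(\mathcal{Z}) \to \mathcal{Z}$ is target evaluation.

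First I would identify the two full subcategories realizing the recollement. The localizing subcategory of $X \in \Spectra^{C_p}$ with $\widetilde{E}C_p \otimes X \simeq 0$ (equivalently, $EC_{p+}$-modules) is identified with $\Spectra^{BC_p}$ via the restriction functor $X \mapsto X^e$, whose left adjoint is induction $Y \mapsto EC_{p+} \otimes Y$. Dually, the colocalizing subcategory of $\widetilde{E}C_p$-modules is identified with $\Spectra$ via categorical fixed points, which on this subcategory coincides with geometric fixed points.

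Next, I would extract the recollement datum attached to an arbitrary $X \in \Spectra^{C_p}$. Smashing with the cofiber sequence $EC_{p+} \to S^0 \to \widetilde{E}C_p$ and taking $C_p$-categorical fixed points yields a cofiber sequence
\begin{equation*}
(X^e)_{hC_p} \xrightarrow{\tr} X^{C_p} \longrightarrow X^{\varphi C_p}
\end{equation*}
in $\Spectra$, using the Adams--Wirthmüller identification $(EC_{p+} \otimes X)^{C_p} \simeq (X^e)_{hC_p}$ and taking the third term as the definition of geometric fixed points; this reproduces the formula in the statement. The structure map $X^{\varphi C_p} \to (X^e)^{tC_p}$ then arises by comparing with the Tate cofiber sequence $(X^e)_{hC_p} \xrightarrow{\Nm} (X^e)^{hC_p} \to (X^e)^{tC_p}$ via the natural map $X^{C_p} \to (X^e)^{hC_p}$ from categorical to homotopy fixed points.

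The main obstacle is the identification of the gluing functor $\Phi$: I would need to show that for $Y \in \Spectra^{BC_p}$, the geometric fixed points of its cofree extension $j_{*}Y$ recover $Y^{tC_p}$. Granting this, the general recollement reconstruction theorem immediately produces an equivalence $\Spectra^{C_p} \simeq \Spectra^{BC_p} \times_{\Spectra} \Ar(\Spectra)$ of the desired form, and the comparison functor obtained from the machinery sends $X$ to the triple $(X^e,\, X^{\varphi C_p},\, X^{\varphi C_p} \to (X^e)^{tC_p})$ constructed above. The identification $\Phi \simeq (-)^{tC_p}$ is the classical content of the statement; once in place, the remainder of the argument is formal.
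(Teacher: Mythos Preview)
The paper does not give its own proof of this proposition: it simply attributes the statement to \cite[Theorem 6.24]{MNN} in the sentence immediately preceding it. Your outline is the standard argument behind that result---build the recollement from the isotropy separation sequence $EC_{p+}\to S^0\to\widetilde{E}C_p$, identify the open and closed pieces with $\Spectra^{BC_p}$ and $\Spectra$, compute the gluing functor as $(-)^{tC_p}$, and then invoke the general reconstruction of a stably recollected category as a pullback along target evaluation on the arrow category. So there is nothing to compare: your proposal supplies the content that the paper deliberately outsources, and it does so correctly.
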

\begin{ntn}
	We will denote the projection $ \Spectra^{C_p} \to \Ar\left( \Spectra\right) $ by $ s_{(-)} $, i.e. for any $ C_p $-spectrum $ A $ we have a map $ s_A \colon A^{\varphi C_p} \to A^{tC_p} $. 
\end{ntn}
It will be convenient to know that the recollement of Proposition \ref{prop:eqvtspectrarecollement} is compatible with symmetric monoidal structures. 
\begin{prop}
	Let $ C_p $ be a cyclic group of prime power order. 
	Then the recollement of Notation \ref{prop:eqvtspectrarecollement} is a symmetric monoidal recollement in the sense of \cite[Definition 2.20]{ShahRS}. 
\end{prop}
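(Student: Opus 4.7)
\begin{proofsketch}
My plan is to verify Shah's axioms for a symmetric monoidal recollement \cite[Definition 2.20]{ShahRS} directly, using the classical isotropy separation cofiber sequence
\[
EC_{p,+} \to S^0 \to \tilde{E}C_p
\]
in $ \Spectra^{C_p} $.  Both $ EC_{p,+} $ and $ \tilde{E}C_p $ are idempotent commutative algebra objects of $ \Spectra^{C_p} $, in the sense that the multiplications $ EC_{p,+} \otimes EC_{p,+} \to EC_{p,+} $ and $ \tilde{E}C_p \otimes \tilde{E}C_p \to \tilde{E}C_p $ are equivalences, while $ EC_{p,+} \otimes \tilde{E}C_p \simeq 0 $.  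The smashing localizations at these idempotent algebras thus split $ \Spectra^{C_p} $ into complementary smashing subcategories, which formally yields the data of a symmetric monoidal recollement.

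The next step is to match the presentation of Proposition \ref{prop:eqvtspectrarecollement} with this smashing recollement.  The smashing localization at $ EC_{p,+} $ is identified with $ \Spectra^{BC_p} $ via the underlying-with-action functor $ X \mapsto X^e $; this functor is symmetric monoidal as a consequence of the Day convolution construction of the smash product on Mackey functors (Recollection \ref{rec:eqvtsmashproduct}).  The smashing localization at $ \tilde{E}C_p $ is identified with $ \Spectra $ via the geometric fixed points functor $ (-)^{\varphi C_p} \simeq ((-) \otimes \tilde{E}C_p)^{C_p} $, which is classically known to be symmetric monoidal.

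It remains to handle the gluing data.  The gluing functor is the $ C_p $-Tate construction $ (-)^{tC_p} \colon \Spectra^{BC_p} \to \Spectra $, whose lax symmetric monoidal structure is classical.  The gluing natural transformation $ s_X \colon X^{\varphi C_p} \to (X^e)^{tC_p} $ arises, after taking $ C_p $-fixed points, from smashing $ X $ with the canonical map $ \tilde{E}C_p \to \tilde{E}C_p \otimes F(EC_{p,+}, S^0) $, and is a lax symmetric monoidal transformation.  The main obstacle is ensuring all of these pieces assemble coherently into the package Shah demands; I expect this to be formal once one observes that both localizations in question are smashing and the corresponding idempotent algebras lift to $ \E_\infty $-algebras in $ \Spectra^{C_p} $.
\end{proofsketch}
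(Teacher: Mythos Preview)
The paper does not supply a proof of this proposition; it is stated immediately before Corollary~\ref{cor:eqvtalg_recollement} and left to the reader, presumably as a well-known consequence of the cited literature (in particular \cite{MNN} and \cite{ShahRS}). Your sketch via the isotropy separation sequence and the idempotent $\E_\infty$-algebra $\tilde{E}C_p$ is the standard argument and is correct: once one knows the recollement arises from a smashing localization at an idempotent $\E_\infty$-algebra, the symmetric monoidal compatibility required by \cite[Definition 2.20]{ShahRS} is essentially automatic, and the identifications of the open and closed pieces with $\Spectra^{BC_p}$ and $\Spectra$ (via $(-)^e$ and $(-)^{\varphi C_p}$, both symmetric monoidal) are classical. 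One small point of care: the object $EC_{p,+}$ is more naturally an idempotent \emph{co}algebra rather than an algebra, so it is cleaner to phrase everything in terms of the single idempotent $\E_\infty$-algebra $\tilde{E}C_p$ and obtain the open piece as its acyclics; this does not affect the substance of your argument.
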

\begin{cor} \label{cor:eqvtalg_recollement}
	Let $ C_p $ be a cyclic group of prime power order. 
	Then there is an equivalence of $ \infty $-categories
	\begin{equation*}
		\Alg_{\E_\infty} \Spectra^{C_p} \xrightarrow{\sim} \Alg_{\E_\infty} \Spectra^{BC_p} \times_{\Alg_{\E_\infty}\Spectra} Ar(\Alg_{\E_\infty}\Spectra) 
	\end{equation*}
	such that applying forgetful functors recovers the equivalence of Proposition \ref{prop:eqvtspectrarecollement}.
\end{cor}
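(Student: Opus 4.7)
The plan is to deduce the corollary formally from the previous proposition by applying $ \Alg_{\E_\infty} = \mathrm{CAlg} $ to the symmetric monoidal recollement and exploiting that $ \mathrm{CAlg} $ preserves limits of symmetric monoidal $ \infty $-categories along symmetric monoidal functors. In other words, since the previous proposition upgrades the equivalence of Proposition \ref{prop:eqvtspectrarecollement} to an equivalence in $ \mathrm{CAlg}(\Pr^L) $ (or at least in symmetric monoidal $ \infty $-categories with colimit-preserving functors), all data needed is already present; our task is to unpack it.

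First I would apply $ \mathrm{CAlg}(-) $ to the symmetric monoidal equivalence
\begin{equation*}
	\Spectra^{C_p} \xrightarrow{\sim} \Spectra^{BC_p} \fiberproduct_{\Spectra} Ar(\Spectra).
\end{equation*}
The functor $ \mathrm{CAlg} $ sends limits of symmetric monoidal $ \infty $-categories (taken along strong symmetric monoidal functors) to limits of $ \infty $-categories; see e.g.\ \cite[Corollary 3.2.2.5]{LurHA}. Since both projection maps $ \Spectra^{BC_p} \to \Spectra $ (forgetting the $ BC_p $-action) and $ \ev_1 \colon Ar(\Spectra) \to \Spectra $ (evaluation at the target) are symmetric monoidal, the pullback is a limit of such a diagram, and therefore
\begin{equation*}
	\mathrm{CAlg}\bigl(\Spectra^{BC_p} \fiberproduct_{\Spectra} Ar(\Spectra)\bigr) \simeq \mathrm{CAlg}(\Spectra^{BC_p}) \fiberproduct_{\mathrm{CAlg}(\Spectra)} \mathrm{CAlg}(Ar(\Spectra)).
\end{equation*}

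Next I would identify $ \mathrm{CAlg}(Ar(\Spectra)) \simeq Ar(\mathrm{CAlg}(\Spectra)) $. Since the symmetric monoidal structure on $ Ar(\Spectra) = \Fun(\Delta^1, \Spectra) $ is the pointwise (Day convolution against the trivial symmetric monoidal structure on $ \Delta^1 $) one, the standard identification $ \mathrm{CAlg}(\Fun(K, \cat)) \simeq \Fun(K, \mathrm{CAlg}(\cat)) $ for any $ K $ (a consequence of the same limit-preservation property of $ \mathrm{CAlg} $, as $ \Fun(K,-) $ is a cotensor) applied with $ K = \Delta^1 $ gives the desired identification. Combining this with the previous step yields the claimed equivalence.

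Finally, compatibility with the forgetful functors to underlying objects is automatic: the forgetful functor $ \mathrm{CAlg}(\cat) \to \cat $ is natural in symmetric monoidal functors, so applying $ \mathrm{CAlg} $ to the diagram of Proposition \ref{prop:eqvtspectrarecollement} and then forgetting down recovers that proposition termwise. The only step with any real content is the identification of $ \mathrm{CAlg}(Ar(\Spectra)) $ with $ Ar(\mathrm{CAlg}(\Spectra)) $, which—although standard—requires pinning down the symmetric monoidal structure on $ Ar(\Spectra) $ that appears implicitly in the statement of the previous proposition; this is the only subtlety I foresee and is handled once one confirms that $ \ev_1 $ is strong symmetric monoidal for the pointwise tensor product.
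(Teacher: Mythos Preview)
Your overall strategy---apply $\mathrm{CAlg}$ to the pullback and use that $\mathrm{CAlg}$ commutes with limits---is the right one, and it is essentially what the paper does via the citation to \cite[Theorem 1.2]{ShahRS}. However, there is a factual error in your identification of the diagram.

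The map $\Spectra^{BC_p} \to \Spectra$ appearing in the pullback of Proposition \ref{prop:eqvtspectrarecollement} is \emph{not} the functor forgetting the $BC_p$-action. Tracing through the formula $X \mapsto \bigl(X^e,\, X^{\varphi C_p} \to (X^e)^{tC_p}\bigr)$, the arrow component has target $(X^e)^{tC_p}$, so the structure map $\Spectra^{BC_p} \to \Spectra$ is the Tate construction $(-)^{tC_p}$. This functor is only \emph{lax} symmetric monoidal, not strong symmetric monoidal. Consequently your appeal to limit-preservation ``along strong symmetric monoidal functors'' does not apply as stated.

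The fix is mild: $\mathrm{CAlg}(-) = \Fun^{\mathrm{lax}\text{-}\otimes}(\Fin_*,-)$ preserves limits computed in $\infty$-operads (equivalently, in symmetric monoidal $\infty$-categories with \emph{lax} symmetric monoidal functors); this is the actual content of \cite[Corollary 3.2.2.5]{LurHA}. Once you note that $(-)^{tC_p}$ and $\ev_1$ are lax symmetric monoidal and that the symmetric monoidal structure on the pullback is the one inherited from this lax limit, your argument goes through. The paper sidesteps this unpacking by invoking Shah's result on symmetric monoidal recollements, which is precisely the machinery designed to handle recollements whose gluing functor (here $(-)^{tC_p}$) is only lax monoidal.
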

\begin{proof}
	The corollary follows from \cite[Theorem 1.2]{ShahRS} and the definition of $ \E_\infty $-algebras. 
\end{proof}
\begin{obs}
	Now suppose $ A, B \in \E_\infty \Alg\left(\Spectra^{C_p}\right) $. 
	Then the morphism space is computed as
	\begin{equation*}
		\hom_{\E_\infty \Alg(\Spectra^{C_p})} (A,B) \simeq \hom_{\E_\infty \Alg(\Spectra^{BC_p})} (A^e,B^e) \fiberproduct_{\hom_{\E_\infty \Alg(\Spectra)} \left(A^{tC_p},B^{tC_p}\right)} \hom_{\Ar(\E_\infty\Alg(\Spectra))}
		\left(\begin{tikzcd}[row sep=small] A^{\varphi C_p} \ar[d] \\ A^{tC_p}\end{tikzcd},\begin{tikzcd}[row sep=small]B^{\varphi C_p} \ar[d]\\ B^{tC_p}\end{tikzcd}\right)
	\end{equation*}
\end{obs}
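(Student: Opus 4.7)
The plan is to simply package the previous proposition together with a general principle about symmetric monoidal recollements. First, I would recall that by the preceding proposition the equivalence of Proposition \ref{prop:eqvtspectrarecollement} refines to a symmetric monoidal recollement in the sense of \cite[Definition 2.20]{ShahRS}, so in particular each of the three vertices
\begin{equation*}
\Spectra^{BC_p}, \qquad \Spectra, \qquad \Ar(\Spectra)
\end{equation*}
is presentably symmetric monoidal and each structure map is symmetric monoidal and colimit-preserving.

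Next, I would appeal to \cite[Theorem 1.2]{ShahRS}, which asserts that the formation of commutative algebras takes a (stable) symmetric monoidal recollement to a recollement. Applied to our situation this yields an equivalence
\begin{equation*}
\Alg_{\E_\infty}\Spectra^{C_p} \xrightarrow{\sim} \Alg_{\E_\infty}\Spectra^{BC_p} \fiberproduct_{\Alg_{\E_\infty}\Spectra} \Alg_{\E_\infty}\Ar(\Spectra)
\end{equation*}
where the right-hand vertex is equipped with the symmetric monoidal structure coming from pointwise tensor (equivalently Day convolution along $\Delta^1$). Since $\Alg_{\E_\infty}$ commutes with limits of $\infty$-categories, there is a natural equivalence $\Alg_{\E_\infty}\Ar(\Spectra) = \Alg_{\E_\infty}\Fun(\Delta^1,\Spectra) \simeq \Fun(\Delta^1, \Alg_{\E_\infty}\Spectra) = \Ar(\Alg_{\E_\infty}\Spectra)$, and under this identification the evaluation-at-target functor $\Ar(\Spectra) \to \Spectra$ corresponds to evaluation at the target after passage to algebras. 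Substituting this identification into the pullback yields the desired description.

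Finally, the compatibility with forgetful functors follows because the equivalence of \cite[Theorem 1.2]{ShahRS} is, by construction, a refinement of the recollement equivalence: the forgetful functors $\Alg_{\E_\infty}(-) \to (-)$ at each vertex intertwine the two pullback squares. In particular, the underlying $C_p$-spectrum functor on the left and the componentwise underlying-spectrum functors on the right fit into a commutative diagram recovering Proposition \ref{prop:eqvtspectrarecollement}. There is no genuinely hard step here; the only mild point of care is the identification $\Alg_{\E_\infty}\Ar(\Spectra) \simeq \Ar(\Alg_{\E_\infty}\Spectra)$, which uses that $\E_\infty$-algebras are defined as a limit-preserving construction in the target and hence commute with the formation of arrow categories.
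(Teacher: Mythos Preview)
Your argument is essentially the paper's own, but it is aimed at the wrong statement: you have reproved Corollary~\ref{cor:eqvtalg_recollement} (the equivalence of $\infty$-categories), whereas the Observation asks for the formula for \emph{mapping spaces}. In the paper the Observation carries no separate proof; it is recorded as an immediate consequence of the preceding Corollary, using the general fact that mapping spaces in a pullback of $\infty$-categories are computed as the corresponding pullback of mapping spaces (e.g.\ \cite[Corollary 3.3.3.2]{LurHTT}). Your write-up never takes this last step: after establishing
\[
\Alg_{\E_\infty}\Spectra^{C_p} \simeq \Alg_{\E_\infty}\Spectra^{BC_p} \fiberproduct_{\Alg_{\E_\infty}\Spectra} \Ar(\Alg_{\E_\infty}\Spectra),
\]
you should add one line noting that $\hom$ in a fiber product of categories is the fiber product of $\hom$'s, which immediately gives the displayed pullback of mapping spaces. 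With that sentence added, your proof coincides with the paper's route; without it, the proposal stops one trivial but necessary step short of the stated Observation.
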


\begin{recollection}
	[Tate diagonal] \cite[Definition III.1.4]{nikolaus-scholze}\label{rec:tate_diagonal}
	The Tate diagonal is a natural transformation $ \id \to (-^{\otimes p})^{tC_p} $ of exact functors $ \Spectra \to \Spectra $ where $ C_p $ acts on $ (A)^{\otimes p} $ via a cyclic permutation. 
\end{recollection}
\begin{recollection}\label{rec:HHRnorm}
	Given a subgroup inclusion $ H \subset G $, the Hill--Hopkins--Ravenel norm \cite[Definition A.52]{MR3505179} is a (non-exact) functor
	\begin{equation*}
		N_H^G: \Spectra^H \to \Spectra^G .
	\end{equation*}
	When $ H = \{e\} \subseteq G = C_p $, the norm is uniquely characterized by the existence of natural equivalences $ \left(N_e^{C_p}X\right)^{\varphi C_p} \simeq X $ in $ \Spectra^{\{e\}} \simeq \Spectra $ and $ \left(N_e^{C_p}X \right)^e \simeq X^{\otimes p} $ in $ \Spectra^{BC_p} $, where $ C_p $ acts on the smash product by permuting the terms. 
	The connecting map $ X \to ( X^{\otimes p})^{tC_p} $ is given by the Tate diagonal of \cite[Theorem 1.7]{nikolaus-scholze}. 
	The functor $ N_H^G $ enjoys the properties of being symmetric monoidal and it preserves sifted colimits \cite[Proposition A.54]{MR3505179}, so it lifts to a functor \cite[Proposition A.56]{MR3505179}
	\begin{equation*}
		N_H^G: \Alg_{\E_\infty} \Spectra^H \to \Alg_{\E_\infty} \Spectra^G .
	\end{equation*}
\end{recollection}
\begin{lemma}\label{lemma:norm_cocontinuous_alg}
	The Hill--Hopkins--Ravenel norm $ N^{C_p}: \E_\infty\Alg (\Spectra) \to \E_\infty\Alg\left(\Spectra^{C_p}\right) $ preserves all small colimits.  
\end{lemma}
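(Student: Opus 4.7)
The plan is to reduce cocontinuity to two separate (easier) properties: preservation of sifted colimits and preservation of finite coproducts (including the empty one). This reduction is a standard fact about presentable $\infty$-categories: a functor between presentable $\infty$-categories preserves all small colimits if and only if it preserves sifted colimits and finite coproducts. So it suffices to verify these two properties for the lifted norm $N^{C_p}\colon \E_\infty\Alg(\Spectra) \to \E_\infty\Alg(\Spectra^{C_p})$.

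For sifted colimits, I would invoke the square
\begin{equation*}
\begin{tikzcd}
\E_\infty\Alg(\Spectra) \ar[r,"N^{C_p}"] \ar[d,"U"'] & \E_\infty\Alg(\Spectra^{C_p}) \ar[d,"U"] \\
\Spectra \ar[r,"N^{C_p}"'] & \Spectra^{C_p}
\end{tikzcd}
\end{equation*}
which commutes because the lifted norm was constructed as the induced functor on $\E_\infty$-algebras of a symmetric monoidal functor (see Recollection \ref{rec:HHRnorm}). The forgetful functors $U$ preserve and detect sifted colimits (by \cite[Proposition 3.2.3.1]{LurHA}), and the underlying norm preserves sifted colimits (Recollection \ref{rec:HHRnorm}); hence the top horizontal arrow preserves sifted colimits.

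For finite coproducts, I would use the fact that in $\E_\infty\Alg(\mathcal{C})$ for a symmetric monoidal presentable $\infty$-category $\mathcal{C}$, the coproduct is computed by the tensor product of underlying objects with its canonical algebra structure, and the initial object is the unit. Since $N^{C_p}$ is symmetric monoidal, it sends the unit to the unit and tensor products to tensor products, hence preserves the initial object and finite coproducts on $\E_\infty$-algebras.

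I do not expect any serious obstacle; the lemma essentially packages together the standard facts that (a) symmetric monoidal functors induce coproduct-preserving functors on commutative algebra objects, and (b) sifted colimit preservation is detected on underlying objects. The only subtlety worth checking carefully is that the commutative square above genuinely commutes — i.e. that the lifted norm is compatible with the underlying norm — but this is built into the construction of the lift (\cite[Proposition A.56]{MR3505179}).
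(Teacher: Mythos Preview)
Your proposal is correct and follows essentially the same approach as the paper: reduce to sifted colimits plus finite coproducts (the paper cites \cite[Lemma 2.8]{norms} for this reduction), handle sifted colimits via the forgetful functors to spectra, and handle finite coproducts via symmetric monoidality of the norm.
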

\begin{proof}
	By \cite[Lemma 2.8]{norms}, it suffices to show that $ N^{C_p} $ preserves sifted colimits and finite coproducts. 
	The norm $ N^{C_p} $ preserves sifted colimits of algebras because they are computed at the level of underlying spectra, and $ N^{C_p} $ preserves finite coproducts of algebras because it is symmetric monoidal with respect to the smash product on $ \Spectra $ and $ \Spectra^{C_p} $ (Recollections \ref{rec:eqvtsmashproduct} \& \ref{rec:HHRnorm}). 
\end{proof}

\subsection{\texorpdfstring{$ C_p $-$ \E_\infty $}{Cp-E∞}-rings}\label{subsection:param_alg}
In this section we introduce the genuine equivariant algebraic structures of interest via the formalism of parametrized operads of Nardin--Shah \cite{NS22}. 
We fix notation for the remainder of the paper. 
\begin{ntn}
	Let $ p $ be a prime and let $ \mathcal{T} $ denote the orbital $ \infty $-category $ \mathcal{O}_{C_p} $ of Recollection \ref{rec:orbit_cat}. 
\end{ntn}
\begin{defn}
	The category $ \underline{\Fin}_{\mathcal{T}} = \underline{\Fin}_{C_p} $ of \emph{parametrized $ \mathcal{T} $-sets} is the $ \mathcal{T} $-$ \infty $-category classified by the functor $ V \mapsto \Fin_{\mathcal{T}^{/V}} $.
	Equivalently, it is described by the fiber product $ \Ar\left(\Fin_{\mathcal{T}^{/V}}\right) \fiberproduct_{\Fin_{\mathcal{T}}} \{V\} $. 
	The category $ \underline{\Fin}_{\mathcal{T},*} $ of \emph{parametrized pointed $ \mathcal{T} $-sets} is the $ \mathcal{T} $-$ \infty $-category classified by the functor $ V \mapsto \left(\Fin_{\mathcal{T}^{/V}}\right)_{\id_V/} $. 
	\begin{equation*}
		\underline{\Fin}_{\mathcal{T}}^v := \Ar\left(\Fin_{\mathcal{T}^{/V}}\right) \fiberproduct_{\Fin_{\mathcal{T}}} \mathcal{T}
	\end{equation*}
\end{defn}
\begin{ex}
	We unpack the definition in the case $ \mathcal{T} = \mathcal{O}_{C_p} $. 
	For each orbit, the fiber is given by
	\begin{equation*}
		\left(\underline{\Fin}_{\mathcal{T}}^v\right)_{C_p/C_p} \simeq \Fin_{C_p} \qquad	\left(\underline{\Fin}_{\mathcal{T}}^v\right)_{C_p} \simeq \Fin_{C_p}^{Free} 		
	\end{equation*} 
	and the morphism $ C_p/C_p \leftarrow C_p $ classifies the functor $ \Fin_{C_p} \to \Fin_{C_p}^{Free} $, $ V \mapsto V \fiberproduct_{C_p/C_p} C_p $. 
\end{ex}
\begin{defn}
	\cite[Definition 2.1.2]{NS22} 
	Let $ \mathcal{T} $ be an atomic orbital $ \infty $-category. 
	The ($ \mathcal{T} $-parametrized) $ \infty $-category of \emph{finite pointed $ \mathcal{T} $-sets} is
	\begin{equation*}
		\underline{\Fin}_{\mathcal{T},*} = \Span\left(\underline{\Fin}_{\mathcal{T}}^v,\left( \underline{\Fin}_{\mathcal{T}}^v \right)^{si}, \left( \underline{\Fin}_{\mathcal{T}}^v \right)^{tdeg} \right). 
	\end{equation*}
	where a morphism $ [\phi: f \to g] $ of $ \underline{\Fin}_{\mathcal{T}}^v $ 
	\begin{equation*}
		\begin{tikzcd}[column sep=small, row sep=small]
			U \ar[r,"h"] \ar[d,"f"'] & X \ar[d,"g"] \\
			V \ar[r,"k"] & Y
		\end{tikzcd}
	\end{equation*}
	\begin{itemize}
		\item belongs to $ \left( \underline{\Fin}_{\mathcal{T}}^v \right)^{tdeg} $ if $ k $ is degenerate, and
		\item belongs to $ \left( \underline{\Fin}_{\mathcal{T}}^v \right)^{si} $ if $ U \to V \times_Y X $ is a summand inclusion. 
	\end{itemize}
\end{defn}
\begin{defn}\label{defn:param_operad}
	\cite[Definition 2.1.7]{NS22}
	A \emph{$ \mathcal{T} $-$ \infty $-operad} is a pair $ (\cat^\otimes,p) $ consisting of a $ \mathcal{T} $-$\infty $-category $ \cat^\otimes $ and a $ \mathcal{T} $-functor $ p: \cat^\otimes \to \underline{\Fin}_{\mathcal{T},*} $ which is a categorical fibration and satisfies the following additional conditions
	\begin{enumerate}[label=(\arabic*)]
		\item For every inert morphism $ \psi: f_+ \to g_+ $ of $ \underline{\Fin}_{\mathcal{T},*} $ and every object $ x \in \cat^\otimes_{f_+} $, there is a $ p $-cocartesian edge $ x \to y $ covering $ \psi $.
		\item \label{defitem:param_segal_condition} For any object $ f_+ = [U_+ \to V ] $ of $ \underline{\Fin}_{\mathcal{T},*} $, the $ p $-cocartesian edges lying over the characteristic morphisms
		\begin{equation*}
			\left\{\chi_{[W \subseteq U]} : f_+ \to I(W)_+ \mid W \in \mathrm{Orbit}(U) \right\}
		\end{equation*}
		together induce an equivalence
		\begin{equation*}
			\prod_{W \in \mathrm{Orbit}(U)} \left(\chi_{[W \subseteq U]}  \right)_! \colon \cat^\otimes_{f_+} \xrightarrow{\sim}  \prod_{W \in \mathrm{Orbit}(U)} \cat^\otimes_{I(W)_+}.
		\end{equation*}
		\item \label{defitem:param_tensor_pushforward} For any morphism
		\begin{equation*}
			\psi \colon f_+ = [U_+ \to V] \to g_+ = [U'_+ \to V']
		\end{equation*}
		of $ \underline{\Fin}_{\mathcal{T},*} $, objects $ x \in \cat^\otimes_{f_+} $ and $ y \in \cat^\otimes_{g_+} $, and any choice of $ p $-cocartesian edges
		\begin{equation*}
			\{y \to y_W \mid W \in \mathrm{Orbit}(U') \}
		\end{equation*}
		lying over the characteristic morphisms
		\begin{equation*}
			\left\{\chi_{[W \subseteq U]} : g_+ \to I(W)_+ \mid W \in \mathrm{Orbit}(U') \right\},
		\end{equation*}
		the induced map
		\begin{equation*}
			\Map^\psi_{\cat^\otimes}(x,y) \xrightarrow{\sim} \prod_{W \in \mathrm{Orbit}(U')} \Map^{\chi_{[W \subseteq U']} \circ \psi}_{\cat^\otimes}\left(x,y_W\right)
		\end{equation*}
		is an equivalence. 
	\end{enumerate}
	Given a $ \mathcal{T} $-$ \infty $-operad $ (\cat^\otimes,p) $, its \emph{underlying $ \mathcal{T} $-$ \infty$-category} is the fiber product
	\begin{equation*}
	 	\cat := \mathcal{T}^\op \fiberproduct_{\underline{\Fin}_{\mathcal{T},*}} \cat^\otimes .
	\end{equation*} 

	\cite[Definition 2.1.8]{NS22}
	Given a $ \mathcal{T} $-$ \infty $-operad $ (\cat^\otimes,p) $, an edge of $ \cat^\otimes $ is \emph{inert} if it is $ p $-cocartesian over an inert edge of $ \underline{\Fin}_{\mathcal{T},*} $, and it is \emph{active} if it factors as a $ p $-cocartesian edge followed by an edge lying over a fiberwise active edge in $ \underline{\Fin}_{\mathcal{T},*} $. 
\end{defn}

\begin{ex} [Indexing systems]
	\label{ex:indexingsystems} 
	Let us recall that the $ C_p $-$ \E_\infty $-operad is given by $ \mathrm{Com}_{C_p}^\otimes = \underline{\Fin}_{C_p,*} $ the $ \mathcal{O}_{C_p} $-operad corresponding to the maximal indexing system \cite[Example 2.4.7]{NS22}. 
	The minimal indexing system $ \Com_{\mathcal{O}_{C_p}^\simeq}^\otimes $ is a $ C_p $-$ \infty $ operad with underlying category the wide subcategory of $ \underline{\Fin}_{\mathcal{O}_{C_p},*} $ containing those morphisms
	\begin{equation*}
	\begin{tikzcd}[cramped]
		U \ar[d] & Z \ar[l] \ar[d] \ar[r,"m"] & X \ar[d] \\
		V & Y \ar[l] \ar[r,equals] & Y
	\end{tikzcd}	
	\end{equation*}
	where $ m $ is a coproduct of (possibly empty) fold maps. 
	The structure map is the natural inclusion $ {\Com_{\mathcal{O}_{C_p}^\simeq}^\otimes \subseteq \Com_{C_p}^\otimes} $. 
\end{ex}
\begin{defn}
	\cite[Definition 2.2.3]{NS22} 
	Let $ p: \cat^\otimes \to \underline{\Fin}_{\mathcal{T},*} $ be a fibration of $ \mathcal{T} $-$\infty $-operads in which $ p $ is moreover a cocartesian fibration. 
	Then we will call $ \cat^\otimes $ a \emph{$ \mathcal{T} $-symmetric monoidal $ \mathcal{T} $-$\infty $-category}. 
\end{defn}
\begin{recollection}
	\cites[Example 2.4.2]{NS22}[\S9]{norms} \label{recollection:Cpcat_norms}
	The $ C_p $-$ \infty $-category of $ C_p $-spectra is endowed with a $ C_p $-symmetric monoidal structure via the Hill--Hopkins--Ravenel norm functors as follows: Example 2.4.2 \cite{NS22} and \S9 of \cite{norms} define a functor 
	\begin{equation*}
	\begin{split}
		\zeta :=\mathbf{SH}^\otimes \circ \omega_{C_p} \colon & \Span(\Fin_{C_p}) \to \Alg_{\E_1}(\Cat) \\
		& T \mapsto \mathbf{SH}^\otimes \circ \omega_{C_p}(T)		
	\end{split} . 
	\end{equation*} 
	Unravelling definitions, this functor takes
	\begin{equation}\label{eq:Cpcat_classifying_functor}
		\begin{split}
			\zeta  & \colon (C_p \surj C_p/C_p) \mapsto \Spectra^{BC_p} \\
			& \colon (C_p = C_p) \mapsto \Spectra^{BC_p} \\
			&\colon (C_p/C_p = C_p/C_p) \mapsto \Spectra^{C_p} \\
			&\colon \begin{tikzcd}[ampersand replacement=\&]
				 C_p/C_p \ar[d, equals] \& C_p \ar[l] \ar[d] \\
				  C_p/C_p \ar[r, equals] \&  C_p/C_p
			\end{tikzcd} 
			\mapsto 
			\begin{tikzcd}[ampersand replacement=\&]
			 \Spectra^{C_p} \ar[d,"{(-)^e}"] \\ \Spectra^{BC_p}
			 \end{tikzcd} \\
			 & \colon \begin{tikzcd}[ampersand replacement=\&]
				 C_p \ar[d] \ar[r] \& C_p/C_p \ar[d,equals] \\
				  C_p/C_p \ar[r, equals] \&  C_p/C_p
			\end{tikzcd} 
			\mapsto 
			\begin{tikzcd}[ampersand replacement=\&]
			 \Spectra \ar[d,"{N^{C_p}}"] \\ \Spectra^{C_p}
			 \end{tikzcd} \\
			 & \colon \begin{tikzcd}[ampersand replacement=\&]
				 C_p \ar[d] \ar[r,equals] \& C_p \ar[d, equals] \\
				  C_p/C_p  \&  C_p \ar[l]
			\end{tikzcd} 
			\mapsto 
			\begin{tikzcd}[ampersand replacement=\&]
			 \Spectra^{BC_p} \ar[d,"{\id}"] \\ \Spectra^{BC_p}
			 \end{tikzcd}
		\end{split}.
	\end{equation}
	Under Theorem 2.3.9 of \cite{NS22}, this corresponds to a cocartesian fibration $ p: \int \zeta := \left( \underline{\Spectra}^{C_p}\right)^\otimes \to \underline{\Fin}_{C_p,*} $. 
\end{recollection}
In this paper we use the notion of a \emph{$ C_p $-$\E_\infty$-ring} in the sense of Nardin--Shah \cite[Definition 2.2.1]{NS22}. 
\begin{defn}\label{defn:param_alg}
	Let $ \cat^\otimes, \mathcal{D}^\otimes \to \mathcal{O}^\otimes $ be fibrations of $ C_p $-$ \infty $-operads. 
	A $ \mathcal{T} $-functor $ p: \cat^\otimes \to \mathcal{D}^\otimes $ is \emph{a morphism of $ \mathcal{T}$-$ \infty $-operads over $ \mathcal{O} $} if $ p $ takes inert morphisms in $ \cat^\otimes $ to inert morphisms in $ \mathcal{D}^\otimes $.
	Then the category of \emph{$ \cat^\otimes $-algebras in $ \mathcal{D} $}
	\begin{equation*}
		\underline{\Alg}_{\mathcal{O},\mathcal{T}}(\cat,\mathcal{D})
	\end{equation*}
	is the full $ \mathcal{T} $-subcategory of $ \underline{\Fun}_{\mathcal{T}}(\cat,\mathcal{D}) $ on the morphisms of $ \mathcal{T}$-$ \infty $-operads over $ \mathcal{O} $. 
	We write $ {\Alg}_{\mathcal{O},\mathcal{T}}(\cat,\mathcal{D}) $ for the (ordinary) $ \infty $-category of $ \mathcal{T} $-objects in $ \underline{\Alg}_{\mathcal{O},\mathcal{T}}(\cat,\mathcal{D}) $.

	When $ \mathcal{O} $ and/or $ \cat $ are equivalent to $ {\underline{\Fin}_{\mathcal{T},*}} $, we drop them from notation. 

	We write $ \Alg_{\underline{\Fin}_{C_p,*}}\left(\underline{\Fin}_{C_p,*},\left(\underline{\Spectra}^{C_p}\right)^\otimes\right) =: C_p\E_\infty\Alg\left(\Spectra^{C_p}\right) $. 
\end{defn}
\begin{ex}\label{ex:param_calg}
	The category of $ C_p $-$\E_\infty $-rings in $ C_p $-spectra is $ C_p\E_\infty\Alg(\Spectra^{C_p}) $ the space of sections of $ p \colon \left( \underline{\Spectra}^{C_p}\right)^\otimes \to \underline{\Fin}_{C_p,*} $ (Recollection \ref{recollection:Cpcat_norms}) which take inert morphisms to inert morphisms. 

	The inclusion $ \Com_{\mathcal{T}^\simeq}^\otimes \subseteq \Com_{\mathcal{T}}^\otimes $ of Example \ref{ex:indexingsystems} induces a forgetful map
	\begin{equation}\label{eq:forget_paramalg_to_ordinary_calg}
		G \colon \Alg_{\mathcal{T}}\left(\Com_{C_p},\mathcal{D}\right) \to \Alg_{\mathcal{T}}\left(\Com_{\mathcal{O}_{C_p}^\simeq},\mathcal{D}\right) .
	\end{equation}
\end{ex} 
The discussion immediately following \cite[Theorem 4.3.4]{NS22} is summarized by the following result. 
\begin{theorem}\label{thm:operad_forget_is_radjoint}
	Suppose $ p: \cat^\otimes \to \mathcal{O}^\otimes $ is a fibration of $ \mathcal{T} $-$ \infty $-operads, and let $ \mathcal{E}^\otimes \to \mathcal{O}^\otimes $ be a $ \mathcal{T} $-$ \infty $-operad. 
	Then the restriction functor
	\begin{equation*}
		p^* \colon \Alg_{\mathcal{O},\mathcal{T}}(\mathcal{E}) \to \Alg_{\mathcal{O},\mathcal{T}}(\cat,\mathcal{E})
	\end{equation*}
	admits a left adjoint $ p_! $.
\end{theorem}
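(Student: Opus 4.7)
The plan is to construct $p_!$ as a parametrized operadic left Kan extension along $p$, adapting the strategy used to build free algebras for ordinary $\infty$-operads (\cite[Corollary 3.1.3.5]{LurHA}) to the parametrized framework of \cite{NS22}. First, unwind the definition: by Definition \ref{defn:param_alg}, an object of $\Alg_{\mathcal{O},\mathcal{T}}(\cat,\mathcal{E})$ is a $\mathcal{T}$-functor $A\colon \cat^\otimes \to \mathcal{E}^\otimes$ over $\mathcal{O}^\otimes$ carrying inert edges to inert edges, and an object of $\Alg_{\mathcal{O},\mathcal{T}}(\mathcal{E})$ is the corresponding datum for $\cat^\otimes = \mathcal{O}^\otimes$. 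The left adjoint $p_!A$ should then send $Y \in \mathcal{O}^\otimes$ to a parametrized colimit of $A$ over the $\mathcal{T}$-category of \emph{active} morphisms from $\cat^\otimes$ into $Y$, i.e. the $\mathcal{T}$-slice
\begin{equation*}
    \cat^\otimes_{act,/Y} := \cat^\otimes \fiberproduct_{\mathcal{O}^\otimes}(\mathcal{O}^\otimes)^{act}_{/Y},
\end{equation*}
with morphism structure induced from the parametrized active-inert factorization on $\underline{\Fin}_{\mathcal{T},*}$ (cf.\ Definition \ref{defn:param_operad}).

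With this candidate in hand, the proof proceeds in four steps. (i) Use the $\mathcal{T}$-operad structure on $\mathcal{E}^\otimes \to \mathcal{O}^\otimes$ to interpret the above objectwise parametrized colimits as \emph{operadic} $\mathcal{T}$-colimits in $\mathcal{E}^\otimes$, in the sense of a $\mathcal{T}$-version of \cite[\S3.1.2]{LurHA}. (ii) Assemble these into a $\mathcal{T}$-functor $p_!A\colon \mathcal{O}^\otimes \to \mathcal{E}^\otimes$ over $\mathcal{O}^\otimes$ via the parametrized Kan extension of Definition \ref{defn:param_Kan_extension}; this is where the hypothesis that $p$ is a \emph{fibration} of $\mathcal{T}$-$\infty$-operads enters, since it guarantees the $\mathcal{T}$-slices $\cat^\otimes_{act,/Y}$ vary correctly in $Y$ (their formation is compatible with $\mathcal{T}$-cocartesian pushforward). (iii) Verify that $p_!A$ is an operad morphism, i.e.\ preserves inert edges. (iv) Deduce the adjunction $p_! \dashv p^*$ from the universal property of the parametrized operadic left Kan extension, combined with the Segal condition \ref{defitem:param_segal_condition} which reduces the mapping-space check to the fiberwise active part.

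The main obstacle is step (iii): showing that the extension lands in inert-preserving $\mathcal{T}$-functors. In the non-parametrized setting this is essentially \cite[Proposition 3.1.3.3]{LurHA}, whose proof hinges on the active-inert factorization system together with Segal-type decompositions expressing an inert image as a product of its constituents. In the $\mathcal{T}$-parametrized setting one needs the analogous decomposition to hold $\mathcal{T}$-coherently, which amounts to (a) a parametrized active-inert factorization on $\underline{\Fin}_{\mathcal{T},*}$, (b) compatibility of the $\mathcal{T}$-operadic colimits in $\mathcal{E}$ with the Segal product axiom \ref{defitem:param_segal_condition} for $\mathcal{E}^\otimes$, and (c) compatibility of parametrized active slices with $\mathcal{T}$-base change of $Y$. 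These ingredients are precisely what \cite[Theorem 4.3.4]{NS22} and the subsequent discussion provide, so once this machinery is invoked the adjoint functor drops out.
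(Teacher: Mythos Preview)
The paper does not supply its own proof of this theorem; it simply records the statement as a summary of the discussion immediately following \cite[Theorem 4.3.4]{NS22}. Your proposal is consistent with this: you outline the parametrized operadic left Kan extension argument and then correctly identify \cite[Theorem 4.3.4]{NS22} as the place where the requisite machinery is established, so there is no discrepancy to report.
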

\begin{defn}\label{defn:param_operadic_LKE}
	Suppose $ p: \cat^\otimes \to \mathcal{O}^\otimes $ is a fibration of $ \mathcal{T} $-$ \infty $-operads, and let $ \mathcal{E}^\otimes \to \mathcal{O}^\otimes $ be a $ \mathcal{T} $-$ \infty $-operad. 
	Let $ A : \cat^\otimes \to \mathcal{E}^\otimes $ be an $ \mathcal{O} $-algebra map. 
	Then the $ \mathcal{O} $-algebra map $ p_!A $ of Theorem \ref{thm:operad_forget_is_radjoint} will be referred to as the \emph{$ \mathcal{T} $-operadic left Kan extension} of $ A $. 
\end{defn}
\begin{rmk}\label{rmk:param_operadic_LKE_specialize} \cite[Remark 4.0.1]{NS22}
	Definition \ref{defn:param_operadic_LKE} specializes to the theory of operadic left Kan extensions of \cite[\S3.1.2]{LurHA} when $ \mathcal{T} = \Delta^0 $. 
\end{rmk}

\section{Normed rings}\label{section:normedrings}
In defining the category of $ C_p $-normed rings, \S\ref{subsection:motivation} guides how we axiomatize the information contained in a $ C_p $-$ \E_\infty $ ring. 
We will see that this information is most naturally captured as the limit of a diagram of $ \infty $-categories (Definition \ref{defn:normedrings}).  
We then exhibit a formula for mapping spaces in normed rings which will be used in the proof of our main theorem (in particular see Proposition \ref{prop:unit_in_normedalg}). 
Finally, we close out this section by showing in Proposition \ref{prop:normedalg_monadic} that the category of normed $ \E_\infty $-rings is monadic over the category of ordinary $ \E_\infty $-algebras. 
\subsection{Preliminaries}
\begin{construction}\label{cons:multiplication_functor}
	Consider the functor
	\begin{equation*}
		\begin{split}
			|-| \colon \mathcal{O}_{C_p} \to \Fin_* \\
			C_p \mapsto \langle p \rangle \\ 
			C_p/C_p \mapsto \langle 1 \rangle
		\end{split}	
	\end{equation*}	
	which takes the underlying set of a set-with-$ C_p $-action. 
	Since $ \Span(\Fin) $ is $ 0 $-semiadditive, the composite $ \mathcal{O}_{C_p} \to \Fin_* \subset \Span(\Fin) $ induces $ \Span\left( \mathcal{O}_{C_p}^\sqcup, fold,all\right) \to \Span(\Fin) $ which restricts to
	\begin{equation*}
	 	m := - \times |-| \colon \Fin_* \times \mathcal{O}_{C_p} \to \Fin_*. 
	\end{equation*}  
	Denote the adjoint of $ m $ by $ m^\dag \colon \Fin_* \to \Fun(\mathcal{O}_{C_p}, \Fin_*) $. 
	Given a symmetric monoidal $ \infty $-category $ q \colon \cat^\otimes \to \Fin_* $, the induced map 
	\begin{equation*}
	 	\Fun_{\Fin_*}\left(\mathcal{O}_{C_p},\cat^\otimes \right)^\otimes := \Fun\left(\mathcal{O}_{C_p},\cat^\otimes \right) \fiberproduct_{\Fun\left(\mathcal{O}_{C_p}, \Fin_* \right),m^\dag} \Fin_* \to \Fin_*
	\end{equation*}  
	is a cocartesian fibration of $ \infty $-operads (cf. \cite[Remark 2.1.3.4]{LurHA}). 
	Since $ \cat^\otimes $ is symmetric monoidal, given any morphism $ h\colon X \to Y $ in $ \mathcal{O}_{C_p} $ and any lift $ \tilde{X} $ of $ |X| $, there is a $ q $-cocartesian morphism $ \tilde{h} $ lifting $ |h| $, so by \cite[Proposition 2.4.4.2]{LurHTT} there is a functor
	\begin{equation*}
		\Fun_{\Fin_*}\left(\mathcal{O}_{C_p},\cat^\otimes \right) \to \Fun\left(\mathcal{O}_{C_p},\cat \right).
	\end{equation*}
	Restriction along $ m $ induces a functor which we also denote by 
	\begin{equation}\label{eq:calg_multiplication_functor}
		m_{(-)} \colon \E_\infty\Alg\left(\cat^\otimes\right) \to \Fun\left(\mathcal{O}_{C_p}, \E_\infty\Alg(\cat^\otimes) \right).
	\end{equation}
	Informally, $ m $ takes an $ \E_\infty $-algebra $ A $ to the $ \mathcal{O}_{C_p} $-diagram $ m_A \colon A^{\otimes p} \to A $. 
\end{construction}
\begin{ntn}
	The prime $ p $ is left implicit in the notation $ m_A $ of Construction \ref{cons:multiplication_functor}, and when $ A $ is understood it may also be dropped from notation. 
\end{ntn}

\begin{rmk}
	The parametrized norm map $ n_A \colon N^{C_p}(A^e) \to A $ is \emph{invariant} with respect to the $ C_p $-action coming from $ A^e $. 
	On the other hand, $ (A^e)^{\otimes p} $ has a $ C_p $-action via cyclic permutations and $ n_A^e $ may also be regarded as a $ C_p $-equivariant map. 
	The reader is warned to remember the distinction between these two $ C_p $-actions; the following observations clarify how these actions interact differently with the structure maps inherent to a $ C_p $-$ \E_\infty $-algebra. 
\end{rmk}
\begin{ntn}\label{ntn:clarifying_actions}
	Let $ A \in \Spectra^{BC_p} $ and let $ \sigma \in C_p $ be a generator. 
	Write $ A^{\otimes^\Delta p} $ for the object in $ \E_\infty\Alg(\Spectra^{BC_p} ) $ with the \emph{diagonal} action, i.e. the composite
	\begin{equation}\label{eq:diagonal_action_tensor}
		\E_\infty\Alg\left(\Spectra \right)^{BC_p} \xrightarrow{R \circ m} \E_\infty\Alg\left(\Spectra \right)^{BC_p \times BC_p} \xrightarrow{\Delta^*} \E_\infty\Alg\left(\Spectra \right)^{BC_p}
	\end{equation}
	where $ m $ is (\ref{eq:calg_multiplication_functor}), $ R $ is restriction along the inclusion $ BC_p \subseteq \mathcal{O}_{C_p} $, and $ \Delta^* $ is restriction along the diagonal $ \Delta: BC_p \to BC_p \times BC_p $. 
	Informally, we regard $ A^{\otimes^\Delta p} $ as being equipped with the $ C_p $-action where $ \sigma $ acts by $ \sigma (a_1 \otimes \cdots \otimes a_p) = \sigma(a_p) \otimes \sigma (a_1) \otimes \cdots \otimes \sigma(a_{p-1}) $. 
	Write $ A^{\otimes^\tau p} $ for the object in $ \E_\infty\Alg(\Spectra^{BC_p} ) $ with the \emph{transposition} action, i.e. the same definition as in (\ref{eq:diagonal_action_tensor}) but with the map $ \{e\} \times \id \colon BC_p \to BC_p \times BC_p $ instead of $ \Delta $. 
	Informally, we regard $ A^{\otimes^\tau p} $ as being equipped with the $ C_p $-action where $ \sigma $ acts by $ \sigma (a_1 \otimes \cdots \otimes a_p) = a_p \otimes a_1 \otimes \cdots \otimes a_{p-1} $. 
\end{ntn}
\begin{obs}\label{obs:clarifying_actions}
	Let $ A \in \Spectra^{BC_p} $. 
\begin{enumerate}[label=(\arabic*)]
	\item The shear endomorphism $ sh := \id_A \otimes \sigma \otimes \cdots \otimes \sigma^{p-1} $ of $ A^{\otimes p} \in \E_\infty \Alg(\Spectra) $ promotes to an equivalence $ A \otimes^\Delta A \to A \otimes^\tau A $ in $ \E_\infty \Alg(\Spectra)^{BC_p} $--in particular it is $ C_p $-equivariant.
	\item Moreover, the Tate diagonal $ A \to (A^{\otimes^{\tau}p} )^{tC_p} $ is equivariant with respect to the given $ C_p $-action on the source and the diagonal $ C_p $-action on the target. 
\end{enumerate}
\end{obs}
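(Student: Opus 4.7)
The plan is to work with the formulation of Notation \ref{ntn:clarifying_actions}: both $A^{\otimes^\Delta p}$ and $A^{\otimes^\tau p}$ arise by pulling back a single object $(R \circ m)(A) \in \E_\infty\Alg(\Spectra^{BC_p \times BC_p})$ along two different maps $BC_p \to BC_p \times BC_p$, namely $\Delta$ and $\{e\} \times \id$. By construction, $(R \circ m)(A)$ has underlying $\E_\infty$-algebra $A^{\otimes p}$ equipped with two commuting $C_p$-actions: the coordinate-wise action $g \mapsto g^{\otimes p}$ inherited from $A$, and the cyclic permutation $\tau$ arising from the restriction $BC_p \subseteq \mathcal{O}_{C_p}$.

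For part (1), the first step is to observe that the group automorphism $\phi \colon C_p \times C_p \to C_p \times C_p$, $(a, b) \mapsto (ab, b)$, satisfies $\phi \circ (\{e\} \times \id) = \Delta$. Consequently, the desired equivalence $A^{\otimes^\Delta p} \simeq A^{\otimes^\tau p}$ will be obtained by pulling back along $\{e\} \times \id$ any equivalence $\phi^*(R \circ m)(A) \xrightarrow{\sim} (R \circ m)(A)$ in $\E_\infty\Alg(\Spectra^{BC_p \times BC_p})$. Unpacking, such an equivalence amounts to specifying an $\E_\infty$-algebra automorphism $\psi$ of $A^{\otimes p}$ that commutes with $g^{\otimes p}$ and satisfies the commutator relation $\psi \circ \tau = g^{\otimes p} \cdot \tau \circ \psi$. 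A direct computation verifies that the shear $sh = \id \otimes \sigma \otimes \cdots \otimes \sigma^{p-1}$ meets both requirements: commutativity with $g^{\otimes p}$ is immediate since each $\sigma^i$ commutes with $\sigma$, and the commutator relation with $\tau$ reduces after expansion to the identity $\sigma^p = \id$.

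For part (2), I would invoke naturality of the Tate diagonal. As a natural transformation of endofunctors of $\Spectra$ (Recollection \ref{rec:tate_diagonal}), the Tate diagonal automatically refines to a $C_p$-equivariant transformation when evaluated on an object $A$ with $C_p$-action. The induced action on the target $(A^{\otimes p})^{tC_p}$ arises from the coordinate-wise action $g^{\otimes p}$ on $A^{\otimes p}$; since the Tate construction trivializes the transposition, $g^{\otimes p}$ descends to the same action as the diagonal $\tau \cdot g^{\otimes p}$, yielding the claim.

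The main obstacle lies in part (1): rigorously promoting the shear to an $\infty$-categorical natural equivalence $\phi^*(R \circ m) \simeq R \circ m$ of functors $\E_\infty\Alg(\Spectra)^{BC_p} \to \E_\infty\Alg(\Spectra^{BC_p \times BC_p})$. While the commutator and naturality checks make sense at the 1-categorical level, producing the higher coherences requires care. The cleanest approach is likely to exhibit the shear as coming from a morphism intrinsic to the parametrized operadic framework governing $R \circ m$, so that the higher coherences are automatic; alternatively, one may leverage naturality of $R \circ m$ in the input $A$ together with an explicit symmetric-monoidal model of the shear as the coordinatewise composite $\id \otimes \sigma \otimes \cdots \otimes \sigma^{p-1}$.
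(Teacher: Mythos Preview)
The paper states this as an \emph{observation} and gives no proof. Your argument is correct and, more to the point, is exactly in line with how the paper itself reasons about the shear elsewhere: in the proof of Proposition~\ref{prop:restrictions_compatibilities}(f) the author introduces the very same group automorphism $sh\colon \{0,\dots,p-1\}\times C_p \to C_p\times C_p$, $(a,b)\mapsto (a+b,b)$, and uses its equivariance properties to identify the two actions on the indexing sets. That is precisely your map $\phi$.

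Regarding the obstacle you flag in part~(1): your instinct that the higher coherences should be handled by exhibiting the shear at the level of indexing objects (rather than by hand on $A^{\otimes p}$) is the right one, and is what the paper does. Once the equivalence is produced as an isomorphism of finite $C_p$-sets (or of objects in $\underline{\Fin}_{C_p,*}$), the induced equivalence of functors $\E_\infty\Alg(\Spectra)^{BC_p}\to\E_\infty\Alg(\Spectra)^{BC_p}$ comes for free from functoriality of $R\circ m$, and no further coherence work is needed. Your part~(2) argument via naturality of the Tate diagonal is also correct and standard.
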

\begin{defn}\label{defn:twisted_tate_frob}
	Let $ A \in \E_\infty\Alg\left(\Spectra^{BC_p}\right) $. 
	The \emph{Tate-valued norm} is the $ \E_\infty $-ring map defined by the composite
	\begin{equation*}
		\nu_A: A \xrightarrow{\Delta} \left(A^{\otimes^{\tau} p}\right)^{tC_p} \xrightarrow{sh} \left(A^{\otimes^{\Delta} p}\right)^{tC_p} \xrightarrow{m^{tC_p}} A^{tC_p} 
	\end{equation*}
	where $ \Delta $ is the Tate diagonal of Recollection \ref{rec:tate_diagonal} and $ sh $ is the shear equivalence of Observation \ref{obs:clarifying_actions}. 
	In particular, it is $ C_p $-equivariant with respect to the given action on $ A $, the diagonal $ C_p $-action on $ (A^{\otimes_\sphere^{\tau} p}) $, and the \emph{trivial} action on $ A^{tC_p} $. 
	We regard $ \nu_A $ as a morphism $ A_{hC_p} \to A^{tC_p} $, or equivalently as an object of $ \Fun\left(\mathcal{O}_{C_p}, \E_\infty \Alg(\Spectra)\right) $.
\end{defn}
\begin{rmk}
	Informally, we think of the Tate-valued norm as being $ a \mapsto a \gamma(a) \cdots \gamma^{p-1}(a) $, which is a ring homomorphism modulo transfers. 
	Note that when $ A $ is equipped with the trivial $ C_p $-action, this is simply the ordinary Tate-valued Frobenius (compare Definition IV.1.1 of \cite{nikolaus-scholze}).
\end{rmk}
\subsection{Definition and properties}
We introduce some notation for the indexing category. 
\begin{ntn}\label{ntn:indexing_normedrings}
	Let $ K $ denote the $ \infty $-categorical nerve of the $ 1 $-category
	\begin{equation*}
	\begin{tikzcd}[cramped, row sep=tiny]
		& 3 \ar[dd]	& \\
		& & 2 \ar[dd] \ar[lu] \\
		& 5 & \\
		1  \ar[rr] \ar[ru] \ar[ruuu, bend left=15] & & 4 \ar[lu]
	\end{tikzcd}
	\end{equation*}
	in which all triangles and squares commute. 
\end{ntn}
\begin{defn}\label{defn:normedrings}
	Consider the diagram $ \mathcal{N} \colon K \to \Cat_\infty $ where $ K $ is as in Notation \ref{ntn:indexing_normedrings}:  
	\begin{equation}\label{diagram:normedrings}
	\begin{tikzcd}[column sep=small]
		& \Fun\left(\mathcal{O}_{C_p},\E_\infty\Alg\left(\Spectra^{BC_p}\right)\right) \ar[dd,"{\ev_{C_p},\ev_{C_p/C_p}}"]	& \\
		& & \Fun\left(\mathcal{O}_{C_p},\E_\infty\Alg\left(\Spectra^{C_p}\right)\right) \ar[dd,"{\ev_{C_p}, \ev_{C_p/C_p}}"] \ar[lu,"{(-)^e}"'] \\
		&\E_\infty\Alg(\Spectra)^{BC_p \times BC_p} \times \E_\infty\Alg(\Spectra)^{BC_p} & \\
		\E_\infty\Alg\left(\Spectra^{C_p}\right)  \ar[rr,"{N^{C_p}(-^e) \times \id}"'] \ar[ru] \ar[ruuu,"{m \circ (-^e)}", bend left=30] & &\E_\infty\Alg\left(\Spectra^{C_p}\right)^{BC_p} \times \E_\infty\Alg\left(\Spectra^{C_p}\right) \ar[lu,"{(-)^e \times (-)^e}"]
	\end{tikzcd}
	\end{equation}
	where $ m $ is the functor of (\ref{eq:calg_multiplication_functor}). 
	Observe that the right-hand trapezoid of (\ref{diagram:normedrings}) commutes essentially by definition, and the leftmost triangle commutes because $ \left(N^{C_p} A \right)^e \simeq (A^e)^{\otimes p} $. 
	We define the category of \emph{normed} $ C_p $-$ \E_\infty $-rings to be the limit of the diagram 
	\begin{equation}
		N\E_\infty\Alg\left(\Spectra^{C_p}\right) := \lim_K \mathcal{N} .
	\end{equation} 

	There is a canonical forgetful functor $ G' \colon N\E_\infty\Alg\left(\Spectra^{C_p}\right) \to \E_\infty\Alg\left(\Spectra^{C_p}\right)  $ given by the canonical projection to the lower left corner of the diagram (\ref{diagram:normedrings}). 
\end{defn}
\begin{ntn}
	Write $ p_i \colon N\E_\infty\Alg\left(\Spectra^{C_p}\right) \to \mathcal{N}(i) $ for the canonical projection functors. 

	We will often abuse notation and abbreviate an object of $ N\E_\infty\Alg\left(\Spectra^{C_p}\right) $ as a pair $ {(A, n_A: N^{C_2}A \to A)} $ (suppressing the data of the equivalence $ n_A^e \simeq m_{A^e} $). 
\end{ntn}
\begin{rmk}\label{rmk:normedalg_presentable}
	Note that all categories in (\ref{diagram:normedrings}) are presentable and all functors are left adjoints (Lemma \ref{lemma:norm_cocontinuous_alg}), so by \cite[Proposition 5.5.3.13]{LurHTT} we may take the limit in either $ \Pr^L $ or $ \Cat_\infty $. 
\end{rmk}
\begin{prop}\label{prop:normedrings_alt}
	The category $ N \E_\infty\Alg\left( \Spectra^{C_p}\right) $ can be equivalently described as the limit of the diagram
	\begin{equation*}
	\begin{tikzcd}[column sep=small]
		& \Fun\left(\mathcal{O}_{C_p},\E_\infty\Alg(\Spectra)\right) \ar[dd,"{\ev_{C_p},\ev_{*}}"]	& \\
		& & \Fun\left(\mathcal{O}_{C_p},\Ar(\E_\infty\Alg(\Spectra))\right) \ar[dd,"{\ev_{C_p}, \ev_{*}}"] \ar[lu,"{\ev_1}"'] \\
		&\E_\infty\Alg(\Spectra)^{BC_p} \times \E_\infty\Alg(\Spectra) & \\
		\E_\infty\Alg\left(\Spectra^{C_p}\right)  \ar[rr,"{((-)^{\varphi{C_p}}\to (-)^{tC_p} )\circ (N^{C_p} \times \id)}"'] \ar[ru] \ar[ruuu,"{m^{tC_p}:(-^e)^{\otimes p} \to (-^e)}", bend left=30] & &\Ar\left(\E_\infty\Alg\left(\Spectra^{BC_p}\right)\right) \times \Ar\left(\E_\infty\Alg\left(\Spectra\right)\right)  \ar[lu,"{\ev_1}"']
	\end{tikzcd}
	\end{equation*}
\end{prop}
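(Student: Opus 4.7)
The plan is to argue that the recollement of Corollary~\ref{cor:eqvtalg_recollement} allows us to rewrite $\lim_K \mathcal{N}$ as the limit of the new diagram. Explicitly, the recollement writes any $A \in \E_\infty\Alg(\Spectra^{C_p})$ as the pair $(A^e, s_A\colon A^{\varphi C_p} \to A^{tC_p})$ subject to the gluing that $(A^e)^{tC_p}$ agrees with the target of $s_A$. Since $\Fun(\mathcal{O}_{C_p}, -)$, $(-)^{BC_p}$, and Cartesian product all preserve limits, this pullback decomposition passes through every node of $\mathcal{N}$ involving $\E_\infty\Alg(\Spectra^{C_p})$, namely the nodes numbered $1$, $2$, and $4$.

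Applying this decomposition and invoking the commutation of limits, $\lim_K \mathcal{N}$ can be rewritten as a pullback (over a base of $\E_\infty\Alg(\Spectra)$-gluing data) of a ``Borel'' $K$-limit and a ``geometric'' $K$-limit. The Borel $K$-limit collapses: the constraint it imposes at the top node $3$ is that the diagram $m \circ (-^e)$ is the multiplication of $A^e$ viewed as an object of $\E_\infty\Alg(\Spectra^{BC_p})$, which is tautological, and the analogous tautology holds at the Borel components of nodes $2$ and $4$. So the Borel $K$-limit contributes no data beyond what is already carried by $A$ via the recollement.

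The geometric $K$-limit is then identified with the limit of the diagram in the statement by unpacking $N^{C_p}$ via Recollection~\ref{rec:HHRnorm}: $N^{C_p}(X)^{\varphi C_p} \simeq X$ and $N^{C_p}(X)^e \simeq X^{\otimes p}$, with structure map the Tate diagonal. Under the recollement, the functor $N^{C_p}(-^e) \times \id$ at node $4$ of $\mathcal{N}$ becomes the pair $(A^e \xrightarrow{\Delta} ((A^e)^{\otimes p})^{tC_p},\, A^{\varphi C_p} \xrightarrow{s_A} A^{tC_p})$ appearing at node $4$ of the new diagram; similarly, the multiplication functor $m \circ (-^e)$ at node $3$ is promoted to the Tate-valued multiplication $m^{tC_p}$, and the evaluations at $C_p$ and $C_p/C_p$ align correspondingly at nodes $2$, $3$, and $5$.

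The main obstacle I anticipate is bookkeeping the several distinct $BC_p$-actions at play: the Borel action on $A^e$ inherited from $\Spectra^{BC_p}$, the automorphism action $\Aut_{\mathcal{O}_{C_p}}(C_p) \simeq C_p$ induced by the $\mathcal{O}_{C_p}$-diagram structure, and the cyclic permutation action on the tensor powers $(A^e)^{\otimes p}$. Tracking how each is preserved or killed by the projections $(-)^e$, $(-)^{\varphi C_p}$, and $(-)^{tC_p}$, and how they interact with the shear equivalence of Observation~\ref{obs:clarifying_actions}, is the technically delicate point; once this action bookkeeping is settled, the identification of the geometric part with the new limit is a direct verification.
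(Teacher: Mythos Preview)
Your proposal is correct and follows the same approach as the paper: the paper's entire proof is the single line ``Follows from Corollary~\ref{cor:eqvtalg_recollement},'' and your argument is a reasonable unpacking of exactly that reference. Your observation that the Borel part of the decomposition imposes only tautological constraints (since $m \circ (-^e)$ and $(N^{C_p}(-^e))^e \simeq ((-)^e)^{\otimes p}$ are already determined by the $\E_\infty$-structure on $A^e$), leaving only the arrow/geometric data as new information, is the substance behind that one-liner; your caution about tracking the several $BC_p$-actions is warranted but does not indicate a gap.
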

\begin{proof}
	Follows from Corollary \ref{cor:eqvtalg_recollement}. 
\end{proof}

\begin{rmk} \label{rmk:CpEinftyringdata}
	Recall the description of a limit of $ \infty $-categories given by Corollary 3.3.3.2 of \cite{LurHTT}. 
	Combining this with the description of mapping spaces in $ \Alg_{\E_\infty}\left(\Spectra^{C_p}\right) $ which is a consequence of Corollary \ref{cor:eqvtalg_recollement}, we may equivalently characterize a normed $\E_\infty $-ring as the data of a $ \E_\infty $-algebra $ A $ in $ \Spectra^{C_p} $ plus the data of a factorization $ n_A^{\varphi C_p} $ in $ \E_\infty\Alg(\Spectra) $ and a 2-cell making the diagram
	\begin{equation}\label{eq:tatefrobenius_lift}
	\begin{tikzcd}[column sep=huge]
		A \ar[d,"\Delta"'] \ar[r, dotted,"{n_A}"]  & A^{\varphi C_2} \ar[d,"\alpha"] \\
		\left(A^{ \otimes^\tau p}\right)^{tC_p} \ar[r,"{m^{tC_p} \circ (sh)}"] & A^{tC_p} 
	\end{tikzcd}
	\end{equation}
	commute such that, considered as a morphism $ g : \Delta \to \alpha $, $ g $ is equivariant with respect to the given $ C_p $-action on the source and the trivial $ C_p $-action on the target. 
	Note that the composite of the left arrow followed by the lower arrow in (\ref{eq:tatefrobenius_lift}) is the Tate-valued norm of Definition \ref{defn:twisted_tate_frob}. 

	When $ C_p $ acts trivially on $ A^e $, it suffices to produce the 2-cell (\ref{eq:tatefrobenius_lift}). 
	More formally, given a choice of multiplication map $ m: A^{\otimes^\Delta p} \to A $, by Corollary \ref{cor:eqvtalg_recollement} we have an equivalence of fibers
	\begin{equation*}
	\begin{split}
		\fib_{\{m\}}\left(\hom_{\E_\infty\Alg(\Spectra^{C_p})^{BC_p}}\left(N^{C_p}(A^e), A\right) \xrightarrow{(-)^e} \hom_{\E_\infty\Alg\Spectra^{BC_p \times BC_p}}\left((A^e)^{\otimes^{\Delta} p}, A^e\right)\right) \\
		\simeq \fib_{\{m^{tC_p}\}}	\left\{\hom_{\E_\infty\Alg^{\Delta^1 \times BC_p}}\left( 
			\begin{tikzcd}[ampersand replacement=\&, column sep=tiny]
				A^e \ar[d,"\Delta"] \ar[d] \&  A^{\varphi C_p} \ar[d,"\alpha"] \\
				\left(A^{\otimes_\sphere^{\tau} p}\right)^{tC_p}, \& A^{tC_p} 
			\end{tikzcd}
		\right)	\xrightarrow{\ev_1} \hom_{\E_\infty\Alg^{BC_p}}\left(\left(A^{\otimes_\sphere^\tau p}\right)^{tC_p}, A^{tC_p} \right) \right\}
	\end{split}.
	\end{equation*}
\end{rmk}
\begin{obs}
	[Morphism spaces in normed algebras]\label{obs:morphisms_normedrings}
	Let $ s, t \colon K \to \int \mathcal{N} $ be objects in the limit $ N\E_\infty\Alg(\Spectra^{C_p}) $, which we identify as spaces of coCartesian sections of $ \int \mathcal{N} \to K $ where $ \mathcal{N} \colon K \to \Cat_\infty $ is the diagram defining (\ref{diagram:normedrings}). 
	Now by definition of a limit of $ \infty $-categories, we may write the space of morphisms from $ s $ to $ t $ in $ N\E_\infty $ as $ \lim_{k}\hom_{F(k)}(s(k), t(k)) $. 
 
	Unravelling definitions, given a pair $ (A, n_A \colon N^{C_p}A \to A), (B, n_B \colon N^{C_p} B \to B ) $ in the limit of (\ref{diagram:normedrings}), the morphism space $ \Hom_{N\E_\infty}((A, n_A \colon N^{C_p}A \to A),(B, n_B \colon N^{C_p} B \to B ))  $ is computed as the limit of the diagram 
	\begin{equation}\label{eq:morphisms_normedrings_diagram}
	\begin{tikzcd}[column sep=small]
		& \hom\left( \morphism{A^e}{}{(A \otimes A)^{tC_p}}, \morphism{B^e}{}{(B \otimes B)^{tC_p}} \right) \ar[dd] & \\
		& & \mathclap{{\hom\left(\morphism{A}{\Delta}{\left(A^{\otimes p}\right)^{tC_p}}, \morphism{B}{\Delta}{\left(B^{\otimes p}\right)^{tC_p}} \right)}  \fiberproduct_{\hom\left(\morphism{A}{\Delta}{\left(A^{\otimes p}\right)^{tC_p}}, \morphism{B^{\varphi C_p}}{\Delta}{B^{tC_p}} \right)} {\hom\left(\morphism{A^{\varphi C_p}}{\Delta}{A^{tC_p}}, \morphism{B^{\varphi C_p}}{\Delta}{B^{tC_p}} \right)}} \ar[dd] \ar[lu, bend right=30] \\ 
		& \mathclap{\hom\left( (A \otimes A)^{tC_p}, (B \otimes B)^{tC_p} \right) \times \hom\left( A^{tC_p}, B^{tC_p} \right)} & \\
		\mathclap{\hom_{\E_\infty\Alg\left(\Spectra^{C_p}\right)}(A,B)} {} \ar[rr,shorten <=11ex] \ar[ru,shorten >=3ex] \ar[ruuu,"{m:(-^e)^{\otimes p} \to (-^e)}"',near end, bend left=60] & & {\hom\left(\morphism{A}{\Delta}{\left(A^{\otimes p}\right)^{tC_p}}, \morphism{B}{\Delta}{\left(B^{\otimes p}\right)^{tC_p}} \right)} \times {\hom\left(\morphism{A^{\varphi C_p}}{\Delta}{A^{tC_p}}, \morphism{B^{\varphi C_p}}{\Delta}{B^{tC_p}} \right)}	\ar[lu,shorten >=5ex]	
	\end{tikzcd}.
	\end{equation}
\end{obs}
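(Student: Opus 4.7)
The plan is to invoke Corollary \ref{cor:eqvtalg_recollement}, which identifies $\E_\infty\Alg(\Spectra^{C_p})$ with $\E_\infty\Alg(\Spectra^{BC_p}) \times_{\E_\infty\Alg(\Spectra)} \Ar(\E_\infty\Alg(\Spectra))$, and to apply this equivalence at each occurrence of $\E_\infty\Alg(\Spectra^{C_p})$ in the diagram (\ref{diagram:normedrings}). Since $\Fun(\mathcal{O}_{C_p}, -)$, $(-)^{BC_p}$, and binary products all preserve limits in the target (and the relevant categories are presentable, per Remark \ref{rmk:normedalg_presentable}), this fiber-product decomposition distributes across each node of (\ref{diagram:normedrings}).

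After distributing, the limit of (\ref{diagram:normedrings}) is rewritten as the limit of an enlarged diagram whose nodes split cleanly into two flavors: those valued in $\E_\infty\Alg(\Spectra^{BC_p}) \simeq \E_\infty\Alg(\Spectra)^{BC_p}$ and those valued in $\Ar(\E_\infty\Alg(\Spectra))$. Next I would use the commutativity of limits to collapse the enlarged diagram back into a diagram of shape $K$: the $\Ar$-flavored pieces assemble into the new top-right node $\Fun(\mathcal{O}_{C_p}, \Ar(\E_\infty\Alg(\Spectra)))$ and the new right-bottom node $\Ar(\E_\infty\Alg(\Spectra^{BC_p})) \times \Ar(\E_\infty\Alg(\Spectra))$, while the $\E_\infty\Alg(\Spectra)$-flavored pieces assemble into $\Fun(\mathcal{O}_{C_p}, \E_\infty\Alg(\Spectra))$ and $\E_\infty\Alg(\Spectra)^{BC_p} \times \E_\infty\Alg(\Spectra)$.

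The main obstacle will be bookkeeping: verifying that each structure map in the rearranged diagram corresponds to the appropriate map in the statement. Concretely, the composite $N^{C_p}(A^e) \to A$ along the bottom of (\ref{diagram:normedrings}) must translate under the recollement to the arrow $(A^e)^{\varphi C_p} \to (A^e)^{tC_p}$ of the new bottom horizontal map, whose underlying composite $A^e \to A^{tC_p}$ is the Tate-valued norm of Definition \ref{defn:twisted_tate_frob}. This follows from the symmetric monoidal nature of Corollary \ref{cor:eqvtalg_recollement} together with the characterization of $N^{C_p}$ by its geometric fixed points $\bigl(N^{C_p}X\bigr)^{\varphi C_p} \simeq X$ (Recollection \ref{rec:HHRnorm}). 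The remaining structure maps -- the diagonal $m^{tC_p}\colon (A^e)^{\otimes p} \to A^e$ corresponding to the original $m \circ (-)^e$, and the various evaluation functors -- correspond under the recollement in the analogous fashion, since $m$ acts on underlying Borel algebras and is intertwined with its Tate-fixed-point image by the natural transformations of the recollement square.
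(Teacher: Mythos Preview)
Your proposal is correct and takes essentially the same approach as the paper. The paper packages the recollement step you describe into Proposition~\ref{prop:normedrings_alt} (proved by a one-line appeal to Corollary~\ref{cor:eqvtalg_recollement}), and then the Observation simply reads off mapping spaces as a limit over the resulting $K$-shaped diagram; your outline carries out that same rewriting inline, with the same bookkeeping of structure maps under the recollement.
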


\begin{prop}\label{prop:normedalg_monadic}
	The forgetful functor $ G' \colon N\E_\infty\Alg\left(\Spectra^{C_p}\right) \to \E_\infty\Alg\left(\Spectra^{C_p}\right) $ of Definition \ref{defn:normedrings} is monadic. 
\end{prop}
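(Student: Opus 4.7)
The plan is to apply the Barr--Beck--Lurie monadicity theorem \cite[Theorem 4.7.3.5]{LurHA}: it suffices to verify that (i) $G'$ admits a left adjoint, (ii) $G'$ is conservative, and (iii) $G'$ preserves $G'$-split simplicial colimits.

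For (i) and (iii), the decisive observation is Remark \ref{rmk:normedalg_presentable}: the diagram $\mathcal{N}$ consists of presentable $\infty$-categories and colimit-preserving functors, so its limit may be computed equivalently in $\Pr^L$ or in $\Cat_\infty$. As a projection from a limit in $\Pr^L$, the functor $G' = p_1$ is itself a morphism in $\Pr^L$, hence preserves all small colimits (which in particular yields (iii)); computing the same limit in $\Cat_\infty$ shows that $G'$ also preserves all small limits, and since it is an accessible functor between presentable $\infty$-categories, \cite[Corollary 5.5.2.9]{LurHTT} supplies the desired left adjoint, establishing (i).

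For (ii), recall that a morphism $f$ in $N\E_\infty\Alg\left(\Spectra^{C_p}\right) \simeq \lim_K \mathcal{N}$ is the coherent data of component morphisms $\{f_i\}_{i \in K}$, and is an equivalence precisely when each $f_i$ is an equivalence in $\mathcal{N}(i)$. Assume $G'(f) = f_1$ is an equivalence. The edges $1 \to i$ for $i \in \{3,4,5\}$ of $K$ (Notation \ref{ntn:indexing_normedrings}) force $f_i$ to be canonically equivalent to the image of $f_1$ under the corresponding transition functor of (\ref{diagram:normedrings}); since each such functor preserves equivalences, so does $f_i$. The only remaining node is $2$, which is not downstream of node $1$ in $K$; here I invoke the edge $2 \to 4$ to obtain $(\ev_{C_p}, \ev_{C_p/C_p})(f_2) \simeq f_4$, and since $f_4$ has just been shown to be an equivalence and equivalences in $\Fun\left(\mathcal{O}_{C_p}, \E_\infty\Alg\left(\Spectra^{C_p}\right)\right)$ are detected pointwise, $f_2$ is an equivalence as well.

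I do not anticipate a serious obstacle: (i) and (iii) are essentially formal consequences of the $\Pr^L$-presentation of the limit, and (ii) reduces to a direct unpacking of equivalences in a limit of $\infty$-categories. The one point requiring mild care is the treatment of node $2$, whose compatibility with $f_1$ is not immediate from any edge in $K$ and must instead be routed through node $4$.
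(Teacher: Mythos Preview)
Your overall strategy via Barr--Beck--Lurie matches the paper's, and your treatments of (ii) and (iii) are correct. In fact your argument for (iii) is cleaner than the paper's: rather than verifying component-by-component that each $p_i \circ A$ is a colimit diagram (as the paper does via \cite[Corollary 5.1.2.3 and Proposition 5.1.2.2]{LurHA}), you observe once and for all that $G' = p_1$ is a structure map of a limit in $\Pr^L$ and hence preserves \emph{all} small colimits. Your argument for (ii) is an explicit unpacking of what the paper dismisses as ``by inspection,'' and the routing through node $4$ to handle node $2$ is exactly right.

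However, your argument for (i) has a genuine gap. The claim that ``computing the same limit in $\Cat_\infty$ shows that $G'$ also preserves all small limits'' is not a general fact: projections from a limit of $\infty$-categories preserve limits only when the transition functors do, and here the transition functors of $\mathcal{N}$ are \emph{left} adjoints which need not preserve limits (e.g.\ $\mathcal{N}(1\to 4)$ involves $N^{C_p}$). For a toy illustration, take $K=\Delta^1$ classifying a single functor $F\colon C\to D$; then $\lim_K\mathcal{N}\simeq C$ and the projection to $D$ is $F$ itself, which preserves limits only if $F$ does. So while $G'\in\Pr^L$ guarantees a \emph{right} adjoint, it does not by itself produce the left adjoint you need.

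The paper's proof, for its part, also does not explicitly address the left adjoint; it is supplied elsewhere by Corollary~\ref{cor:free_algebras_agree}, whose proof is logically independent of the present proposition. Alternatively, one can verify directly that $G'$ preserves limits: given a diagram $(A_\bullet,n_{A_\bullet})$, the object $\lim A_\bullet$ acquires a norm map as the composite $N^{C_p}\bigl((\lim A_\bullet)^e\bigr)\to \lim N^{C_p}(A_\bullet^e)\xrightarrow{\lim n_{A_\bullet}}\lim A_\bullet$, and one checks from Observation~\ref{obs:morphisms_normedrings} that this is the limit in $N\E_\infty\Alg(\Spectra^{C_p})$. With that in hand your adjoint functor theorem argument would go through---but this is precisely the verification that is missing from your justification.
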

\begin{proof}
	The functor $ G' $ is conservative by inspection. 

	Recall our notation $ p_i \colon N\E_\infty\Alg(\Spectra^{C_p}) \to \mathcal{N}(i) $ for the canonical projection functors.  
	Now suppose given a simplicial object $ A \colon \Delta^\op \to N\E_\infty\Alg\left( \Spectra^{C_p} \right) $ which is $ G' $-split. 
	Then in particular $ p_0\circ A $ is a colimit diagram of $ \E_\infty $-algebras in $ \Spectra^{C_p} $. 
	Since the norm preserves all colimits of algebras by Lemma \ref{lemma:norm_cocontinuous_alg}, $ p_4 \circ A \simeq (N^{C_p} \times \id) \circ p_0 \circ A $ is a colimit diagram. 
	By \cite[Corollary 5.1.2.3(2)]{LurHA} applied to $ S = \mathcal{O}_{C_p} $, $ p_2 \circ A $ is a colimit diagram. 
	Now by Remark \ref{rmk:normedalg_presentable} and Proposition 5.1.2.2(2) of \emph{loc. cit.} applied to $ S = K $, $ A $ is a colimit diagram in $ N\E_\infty\Alg(\Spectra^{C_p}) $, and said colimit is preserved by $ G' $.
	Thus $ G'$ is monadic by the Barr--Beck--Lurie theorem \cite[Theorem 4.7.3.5]{LurHA}.
\end{proof}

\section{Comparing \texorpdfstring{$ C_p $}{Cp}-\texorpdfstring{$ \E_\infty $}{E∞} and normed rings}
In \S\ref{subsection:comparison_functor} we write down a functor from $ C_p $-$ \E_\infty $-algebras to normed $ C_p $-algebras. 
Our proof strategy will be to show that the comparison functor of Corollary \ref{cor:operadic_diagrammatic_comparison} exhibits both $ C_p $-$ \E_\infty $-algebras and normed $ \E_\infty $-algebras as categories of algebras over the \emph{same} monad on $ \E_\infty\Alg\left(\Spectra^{C_p}\right) $, then appeal to a variant of the Barr--Beck--Lurie theorem. 
In \S\ref{subsection:param_monoidal_envelope} we exhibit a formula for the free $ C_p $-$ \E_\infty $-algebra on an $ \E_\infty $-algebra, then we show in \S\ref{subsection:mainthm_proof} that it induces an equivalence. 

\subsection{A comparison functor}\label{subsection:comparison_functor}
Since a normed $ \E_\infty $-ring is a priori less data than a $ C_p $-$ \E_\infty $-algebra, it is most natural to define a `forgetful' functor from the latter to the former. 
In order to write down the functor, we need to unpack the definition of a $ C_2 $-$ \E_\infty $-algebra.  
\begin{ntn}
	\label{ntn:paramfinC2set_breakdown}
	Observe that $ \Span\left(\Fin_{C_p}^{/V}\right) $ is $ 0 $-semiadditive \cite[Lemma C.3]{norms} and define $ \underline{\Span}(\Fin_{C_p,*}) $ to be the colimit of the functor $ V \mapsto \Span\left(\Fin_{C_p}^{/V}\right) $ \cite[Corollary 3.3.4.3]{LurHTT}. 
	Since $ 0 $-semiadditive $ \infty $-categories are closed under all colimits \cite[Corollary 5.4]{MR4133704}, $ \underline{\Span}(\Fin_{C_p}) $ is $ 0 $-semiadditive. 
	Moreover, notice that there is an inclusion $ \underline{\Fin}_{C_p,*} \subset \underline{\Span}(\Fin_{C_p}) $. 

	Let $ \delta : J \to \underline{\Fin}_{C_p,*} $ be a diagram. 
	Under the equivalence of \cites[Theorem 4.1]{MR4133704}[Lemma C.4]{norms} the diagram $ \delta $ classifies a functor $ \Span(J^\sqcup, \mathrm{fold}, \mathrm{all}) \to \underline{\Span}(\Fin_{C_p}) $ which evidently restricts to
	\begin{equation}\label{eq:restriction_finGset}
	 	\iota_J \colon \Fin_* \times J \to \underline{\Fin}_{C_p,*}. 
	\end{equation}  
	When $ J = \Delta^0 $ and $ \delta $ is the inclusion of a single object $ T \in \underline{\Fin}_{C_p,*} $, we write $ \iota_T $. 

	Consider the diagrams $ \alpha_2, \alpha_3 \colon \mathcal{O}_{C_p} \to \underline{\Fin}_{C_p,*} $
	\begin{equation*}
		\begin{tikzcd}[cramped]
			C_p \ar[r,equals] \ar[d] & C_p \ar[d] \ar[r] & C_p/C_p \ar[d,equals] \\
			C_p/C_p \ar[r,equals] & C_p/C_p \ar[r,equals] & C_p/C_p
		\end{tikzcd}	
		\qquad \qquad
		\begin{tikzcd}[cramped]
			{C_p^{\sqcup p}} \ar[r,equals] \ar[d,"\nabla"] & {C_p^{\sqcup p}} \ar[d,"\nabla"] \ar[r,"{\nabla}"] & C_p \ar[d,equals] \\
			C_p \ar[r,equals] & C_p \ar[r,equals] & C_p
		\end{tikzcd}
	\end{equation*}	
	resp., where $ C_p $ acts on $ C_p^{ \sqcup p}  $ by permuting the terms of the disjoint union. 
	The preceding discussion shows that there are functors
	\begin{equation}\label{eq:restriction_finGset_mors}
	 	\iota_{\alpha_i} := - \times \alpha_i \colon \Fin_* \times \mathcal{O}_{C_p} \to \underline{\Fin}_{C_p,*}. 
	\end{equation}  
	By a similar discussion to that of Construction \ref{cons:multiplication_functor}, the $ \iota_{(-)} $ induce functors 
	\begin{equation}\label{eq:restrictionparamalg_is_alg}
		\iota_J \colon C_p\E_\infty\Alg\left(\Spectra^{C_p} \right) \to \Fun \left(J, \E_\infty \Alg\left(\Spectra^{C_p} \right) \right) .
	\end{equation}
\end{ntn}

\begin{construction}\label{cons:restrictions}
	Recall that the category of $ C_p $-$ \E_\infty $-algebras $ C_p\E_\infty\Alg $ is given by sections of the fibration $ \left(\underline{\Spectra}^{C_p}\right)^\otimes $ (Definition \ref{defn:param_alg}). 
	By (\ref{eq:restrictionparamalg_is_alg}), restricting to certain subcategories of $ \underline{\Fin}_{\C_p,*} $ gives functors: 
	\begin{enumerate}[label=(\alph*)]
		\item $ \gamma_1 \colon C_p\E_\infty\Alg\left(\Spectra^{C_p}\right) \to \E_\infty\Alg\left(\Spectra^{C_p}\right) $ given by restriction along $ \iota_T $ of (\ref{eq:restriction_finGset}) for $ {T = [C_p/C_p = C_p/C_p]} $. 
		\item $ \gamma_4 \colon C_p\E_\infty\Alg\left(\Spectra^{C_p}\right) \to \E_\infty\Alg\left(\Spectra^{C_p}\right) \times \E_\infty\Alg\left(\Spectra^{C_p}\right) $ given by restriction along $ \iota_T \times \iota_S $ of (\ref{eq:restriction_finGset}) for $ T = [C_p \surj C_p/C_p] $ and $ {S = [C_p/C_p = C_p/C_p]} $. 
		\item $ \gamma_5 \colon C_p\E_\infty\Alg\left(\Spectra^{C_p}\right) \to \E_\infty\Alg(\Spectra)^{BC_p \times BC_p} \times \E_\infty\Alg(\Spectra)^{BC_p} $ given by restriction along $ {\iota_T \times \iota_S} $ of (\ref{eq:restriction_finGset}) for $ {T = \left[C_p^{\sqcup p} \xrightarrow{\nabla} C_p\right]} $ and $ {S = [C_p = C_p]} $, resp. 
		Note that in the former case, $ C_p $ acts by permuting the factors of $  C_p^{\sqcup p} \simeq C_p \times C_p $ cyclically.
		\item $ \gamma_2 \colon C_p \E_\infty\Alg \to \Fun\left(\mathcal{O}_{C_p},\E_\infty\Alg(\Spectra^{C_p}) \right) $ given by restriction along $ \iota_{\alpha_2} $ of (\ref{eq:restriction_finGset_mors}). 
		\item $ \gamma_3 \colon C_p \E_\infty\Alg \to \Fun\left(\mathcal{O}_{C_p},\E_\infty\Alg(\Spectra^{BC_p}) \right) $ given by restriction along $ \iota_{\alpha_3} $ of (\ref{eq:restriction_finGset_mors}).
	\end{enumerate}
\end{construction}
\begin{prop}\label{prop:restrictions_compatibilities}
	The functors of Construction \ref{cons:restrictions} are related in the following way:
	\begin{enumerate}[label=(\alph*)]
		\item There is an equivalence $ m \circ \gamma_1^e \simeq \gamma_3 $.
		\item There is an equivalence $ \ev_{C_p} \times \ev_{C_p/C_p} \circ \gamma_3 \simeq \gamma_5 $. 
		\item There is an equivalence $ (((-)^e)^{\otimes 2} \times (-)^e )\circ \gamma_1 \simeq \gamma_5 $. 
		\item There is an equivalence
		$ \left(\ev_{C_p} \times \ev_{C_p/C_p}\right) \circ \gamma_2 \simeq \gamma_4 $ of functors $ C_p\E_\infty\Alg\left(\Spectra^{C_p} \right) \to \E_\infty\Alg\left(\Spectra^{C_p} \right)^{\times 2} $. 
		\item There is an equivalence
		$ \left(N^{C_p} \times \id\right) \circ \gamma_1 \simeq \gamma_4 $ of functors $ C_p\E_\infty\Alg\left(\Spectra^{C_p} \right) \to \E_\infty\Alg\left(\Spectra^{C_p} \right)^{\times 2} $. 
		\item There is a commutative diagram
		\begin{equation*}
			\begin{tikzcd}[column sep=tiny, row sep=small]
				C_p\E_\infty \Alg\left(\Spectra^{C_p}\right) \ar[rd,"{\gamma_3}"'] \ar[rr,"{\gamma_2}"] & & \Fun\left(\mathcal{O}_{C_p}, \E_\infty \Alg\left(\Spectra^{C_p}\right)\right) \ar[ld,"{(-)^e}"] \\
				& \Fun\left(\mathcal{O}_{C_p}, \E_\infty \Alg(\Spectra^{BC_p})\right) &
			\end{tikzcd}
		\end{equation*}
	\end{enumerate}
\end{prop}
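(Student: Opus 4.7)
\begin{proofsketch}
The strategy is uniform: each $\gamma_i$ is, by Notation \ref{ntn:paramfinC2set_breakdown} and Construction \ref{cons:restrictions}, a restriction of the cocartesian section defining a $C_p$-$\E_\infty$-algebra along a specific functor into $\underline{\Fin}_{C_p,*}$. Since the target fibration $\left(\underline{\Spectra}^{C_p}\right)^\otimes \to \underline{\Fin}_{C_p,*}$ encodes the norms and underlying fixed-point functors by (\ref{eq:Cpcat_classifying_functor}), each asserted equivalence reduces to identifying the two relevant indexing diagrams (or their composites) inside $\underline{\Fin}_{C_p,*}$, up to cocartesian transport. The plan is therefore to treat each clause by writing out the relevant $\iota$-map, and checking that post-composition with the corresponding functor between subcategories of $\underline{\Fin}_{C_p,*}$ yields the indexing diagram associated to the claimed target.

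For (a), (b), (c) one observes that the diagram $\alpha_3 \colon \mathcal{O}_{C_p} \to \underline{\Fin}_{C_p,*}$ factors through the subcategory cutting out $\E_\infty\Alg(\Spectra^{BC_p})$, and that the composite of $\alpha_3$ with the classifying functor $\zeta$ of Recollection \ref{recollection:Cpcat_norms} is precisely the multiplication diagram $m$ of Construction \ref{cons:multiplication_functor}; the identifications in (b) and (c) then follow by evaluating at the two orbits $C_p$ and $C_p/C_p$, and by using the defining property $(N^{C_p}B)^e \simeq B^{\otimes p}$ from Recollection \ref{rec:HHRnorm}. Clauses (d) and (e) are similar: evaluating $\alpha_2$ at $C_p$ and $C_p/C_p$ recovers the two morphisms in $\underline{\Fin}_{C_p,*}$ indexing $\gamma_4$, and the cocartesian pushforward along the collapse map $C_p \surj C_p/C_p$ is the norm $N^{C_p}$ by (\ref{eq:Cpcat_classifying_functor}), yielding the equivalence $(N^{C_p} \times \id) \circ \gamma_1 \simeq \gamma_4$. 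Finally, (f) follows because the underlying-algebra functor $(-)^e$ corresponds to pullback along $BC_p \subseteq \mathcal{O}_{C_p}$, so post-composing $\gamma_2$ with $(-)^e$ implements precisely the restriction of scalars that turns $\alpha_2$ into $\alpha_3$.

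Concretely, each verification amounts to drawing a commutative diagram in $\underline{\Fin}_{C_p,*}$, transporting it through $\zeta$ via the straightening-unstraightening of Theorem 2.3.9 of \cite{NS22}, and observing that the resulting square of $\infty$-categories matches the one asserted. Because cocartesian sections can be precomposed with functors of base categories and because $\zeta$ is already a functor out of a span category, no further coherence needs to be produced by hand.

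The only real obstacle is bookkeeping: ensuring that the two $C_p$-actions on $(A^e)^{\otimes p}$ (cyclic permutation of tensor factors versus the residual $BC_p$-action) are tracked correctly under the identifications $\zeta([C_p^{\sqcup p} \xrightarrow{\nabla} C_p]) \simeq \Spectra^{BC_p}$ used in clause (c), and that the multiplication functor $m$ of Construction \ref{cons:multiplication_functor} is indeed the image of $\alpha_3$ under $\zeta$ rather than a shear thereof. Granted the conventions set up in Notation \ref{ntn:clarifying_actions} and Observation \ref{obs:clarifying_actions}, this is a matter of inspection.
\end{proofsketch}
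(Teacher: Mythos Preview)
Your overall strategy---reduce each clause to an identification of indexing diagrams in $\underline{\Fin}_{C_p,*}$ and transport along inert morphisms via $\zeta$---is exactly the paper's approach, and clauses (b), (c), (d), and (e) go through essentially as you describe.

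However, your treatment of (f) has a genuine imprecision that the paper handles explicitly. You write that post-composing $\gamma_2$ with $(-)^e$ ``turns $\alpha_2$ into $\alpha_3$,'' but this is not literally true: pulling $\alpha_2$ back along the inert edge $C_p \to C_p/C_p$ yields the morphism $g = \pi_2 \colon C_p \times C_p \to C_p$ in $\underline{\Fin}_{C_p,*}$, \emph{not} the fold map $\nabla \colon C_p^{\sqcup p} \to C_p$ defining $\alpha_3$. The paper supplies the missing step: the shear automorphism $sh \colon (a,b) \mapsto (a+b,b)$ of $C_p \times C_p$ intertwines $\pi_2$ with $\nabla$, and is $C_p$-equivariant for the diagonal action on the source and the action on the second factor on the target. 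This is precisely where Observation \ref{obs:clarifying_actions} enters. So the shear is not, as your last paragraph suggests, a question of whether $m$ agrees with the image of $\alpha_3$ under $\zeta$ (it does, tautologically, since $\alpha_3$ is built from fold maps); rather, the shear is needed to identify $(\gamma_2)^e$ with $\gamma_3$ in the first place. Your sketch locates the bookkeeping obstacle in the wrong clause.
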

\begin{cor}\label{cor:operadic_diagrammatic_comparison}
	There is a canonical functor 
	\begin{equation*}
	 	\gamma \colon C_p\E_\infty\Alg\left(\Spectra^{C_p}\right) \to N\E_\infty\Alg\left(\Spectra^{C_p}\right).
	\end{equation*} 
\end{cor}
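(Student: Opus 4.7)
The plan is to invoke the universal property of the limit defining $N\E_\infty\Alg(\Spectra^{C_p})$, and to assemble the five restriction functors $\gamma_1, \ldots, \gamma_5$ of Construction \ref{cons:restrictions} into a cone from $C_p\E_\infty\Alg(\Spectra^{C_p})$ over the diagram $\mathcal{N}\colon K \to \Cat_\infty$ that defines the category of normed rings. Since a functor $X \to \lim_K \mathcal{N}$ is equivalent to the data of a cone under $X$ over $\mathcal{N}$, producing $\gamma$ reduces to specifying a leg at each of the five vertices of $K$ together with coherence data filling in every triangle and square of $K$.

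First I would take the leg at each vertex $i \in \{1,\ldots,5\}$ to be $\gamma_i$; by inspection of Construction \ref{cons:restrictions}, the codomain of $\gamma_i$ is precisely $\mathcal{N}(i)$. Then, for each generating arrow in the indexing category $K$ of Notation \ref{ntn:indexing_normedrings}, I would supply a natural equivalence making the associated triangle in $\Cat_\infty$ commute, using the equivalences of Proposition \ref{prop:restrictions_compatibilities}: the bent arrow $1 \to 3$ labeled $m \circ (-)^e$ is handled by (a); the arrow $1 \to 4$ labeled $N^{C_p}(-^e) \times \id$ by (e); the arrow $1 \to 5$ by (c); the arrow $2 \to 3$ labeled $(-)^e$ by (f); the arrow $2 \to 4$ labeled $(\ev_{C_p}, \ev_{C_p/C_p})$ by (d); the arrow $3 \to 5$ labeled $(\ev_{C_p}, \ev_{C_p/C_p})$ by (b); and the arrow $4 \to 5$ labeled $(-)^e \times (-)^e$ by naturality of the underlying-$BC_p$-spectrum functor. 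By the universal property of the limit in $\Cat_\infty$ (which agrees with that in $\Pr^L$ by Remark \ref{rmk:normedalg_presentable}), this cone determines the desired functor $\gamma$.

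The only subtlety worth flagging, though not a true obstacle, is ensuring that the various 2-cells glue into higher-coherent data rather than merely commuting up to homotopy in each individual triangle. This coherence is automatic: all of the $\gamma_i$ arise by restriction along functors of the form (\ref{eq:restrictionparamalg_is_alg}) applied to subdiagrams of $\underline{\Fin}_{C_p,*}$, so the triangles of $K$ are inherited from relations among morphisms in $\underline{\Fin}_{C_p,*}$ realized inside the single cocartesian fibration $(\underline{\Spectra}^{C_p})^\otimes \to \underline{\Fin}_{C_p,*}$ of Recollection \ref{recollection:Cpcat_norms}. Concretely, one assembles the whole cone as a single restriction functor along a $K$-shaped diagram in $\underline{\Fin}_{C_p,*}$ built from the maps $\iota_T, \iota_{\alpha_2}, \iota_{\alpha_3}$ of Notation \ref{ntn:paramfinC2set_breakdown}, so that Proposition \ref{prop:restrictions_compatibilities} is merely recording pointwise consequences of this single assembly.
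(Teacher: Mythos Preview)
Your proposal is correct and follows essentially the same approach as the paper: both regard the $\gamma_i$ as legs of a cone over $\mathcal{N}$, invoke Proposition \ref{prop:restrictions_compatibilities} for the compatibilities, and conclude via the universal property of the limit. Your treatment is more explicit about the higher-coherence issue, which the paper simply absorbs into the phrase ``by definition of a homotopy limit.''
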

\begin{proof}
	The functors of Construction \ref{cons:restrictions} may be regarded as 
	\begin{equation*}
	 	\gamma_i : C_p\E_\infty\Alg\left(\Spectra^{C_p}\right) \to \mathcal{N}(i)
	\end{equation*} 
	where $ \mathcal{N} : K \to \Cat_\infty $ is as in Definition \ref{defn:normedrings} and Notation \ref{ntn:indexing_normedrings}. 
	Proposition \ref{prop:restrictions_compatibilities} shows that the functors $ \gamma_i $ commute with the structure maps in the diagram $ \mathcal{N} $. 
	By definition of a homotopy limit, the $ \gamma_i $ assemble to the desired functor $ \gamma $. 
\end{proof}
\begin{proof} [Proof of Proposition \ref{prop:restrictions_compatibilities}]
\begin{enumerate}[label=(\alph*)]
	\item Consider the diagram $ T:= \mathcal{O}_{C_p} \times \Delta^1 \to \underline{\Fin}_{C_p,*} $ 
	\begin{equation}\label{diagram:underlying_of_fold}
		\begin{tikzcd}[column sep=small, row sep=small,cramped]
			{C_p/C_p^{\sqcup p}} && {C_p/C_p} \\
			& {C_p^{\sqcup p}} && {C_p} \\
			{C_p/C_p} && {C_p/C_p} \\
			& {C_p} && {C_p}
			\arrow[from=1-1, to=1-3,"\nabla"]
			\arrow[from=1-1, to=3-1]
			\arrow[from=3-1, to=3-3,equals]
			\arrow[from=1-3, to=3-3,equals]
			\arrow[from=2-2, to=1-1]
			\arrow[from=2-2, to=4-2,crossing over]
			\arrow[from=4-2, to=3-1]
			\arrow["\lrcorner"{anchor=center, pos=0.125, rotate=-90}, draw=none, from=2-2, to=3-1]
			\arrow[from=2-4, to=1-3]
			\arrow[from=2-4, to=4-4,equals]
			\arrow[from=4-2, to=4-4,equals]
			\arrow[from=4-4, to=3-3]
			\arrow["\lrcorner"{anchor=center, pos=0.125, rotate=-90}, draw=none, from=2-4, to=3-3]
			\arrow[from=2-2, to=2-4,crossing over,"\nabla",near start]
		\end{tikzcd}.
	\end{equation}
	Note that $ (m\circ \gamma_1)^e \simeq m \circ (\gamma_1^e) $. 
	Now notice that $ m\circ \gamma_1 $ is given by restriction along $ \iota_{\nabla_{C_p/C_p}} $ (i.e. the back face), while restriction along the front face implements $ \gamma_3 $. 
	We may regard $ \iota_T $ (Notation \ref{ntn:paramfinC2set_breakdown}) as a natural transformation $ \beta: (m \circ \gamma_1)^e \implies \gamma_3 $ by (\ref{eq:Cpcat_classifying_functor}). 
	Since the morphisms from the back face to the front face of (\ref{diagram:underlying_of_fold}) are inert, $ \beta $ is a natural equivalence.   

	\item This is evident. 

	\item Follows from (a) and (b). 

	\item This is evident from the definitions of $ \alpha_2 $ and $ \gamma_4 $. 

	\item Consider the morphism $ w \colon \Delta^1 \to \underline{\Fin}_{C_p,*} $
	\begin{equation*}
		\begin{tikzcd}[ampersand replacement=\&,cramped]
			 C_p/C_p \ar[d, equals] \& C_p \ar[l] \ar[d] \ar[r,equals] \& C_p \ar[d] \\
			  C_p/C_p \ar[r, equals] \&  C_p/C_p \ar[r, equals] \& C_p/C_p
		\end{tikzcd} .
	\end{equation*} 
	Notice that $ w $ is inert and recall that morphisms of operads take inert morphisms to inert morphisms. 
	Because a morphism in $ \left(\underline{\Spectra}^{C_p}\right)^\otimes $ factors canonically as a $ p $-cocartesian morphism and a fiberwise morphism, by definition of $ \zeta $ (\ref{eq:Cpcat_classifying_functor}) we see that restriction along $ \iota_w $ gives an equivalence $ N^{C_p}(\gamma_1^e) \simeq \pi_1 \gamma_4 $. 

	\item Now consider the diagram $ T:= \mathcal{O}_{C_p} \times \Delta^1 \to \underline{\Fin}_{C_p,*} $ 
	\begin{equation}\label{diagram:index_underlying_of_norm}
		\begin{tikzcd}[column sep=small, row sep=small]
			{C_p} && {C_p/C_p} \\
			& {C_p \times C_p} && {C_p} \\
			{C_p/C_p} && {C_p/C_p} \\
			& {C_p} && {C_p}
			\arrow[from=1-1, to=1-3]
			\arrow[from=1-1, to=3-1]
			\arrow[from=3-1, to=3-3,equals]
			\arrow[from=1-3, to=3-3,equals]
			\arrow[from=2-2, to=1-1,"{\pi_1}",near start]
			\arrow[from=2-2, to=4-2,crossing over]
			\arrow[from=4-2, to=3-1]
			\arrow["\lrcorner"{anchor=center, pos=0.125, rotate=-90}, draw=none, from=2-2, to=3-1]
			\arrow[from=2-4, to=1-3]
			\arrow[from=2-4, to=4-4,equals]
			\arrow[from=4-2, to=4-4,equals]
			\arrow[from=4-4, to=3-3]
			\arrow["\lrcorner"{anchor=center, pos=0.125, rotate=-90}, draw=none, from=2-4, to=3-3]
			\arrow[from=2-2, to=2-4,crossing over,"{g=\pi_2}",near start]
		\end{tikzcd}
	\end{equation}
	considered as an inert morphism (in fact, $ \ev_1 $-cocartesian) from the back face $ \alpha_2 $ (Notation \ref{ntn:paramfinC2set_breakdown}) to the front face. 

	Identifying the underlying set of $ C_p $ with $ \{0, 1, \ldots, p-1\} $, notice that the shear equivalence 
	\begin{equation*}
	\begin{split}
	 	sh \colon \{0,1,\ldots, p-1\} \times C_p \to C_p \times C_p \\
		(a, b) \mapsto (a+b, b)
	\end{split}
	\end{equation*} 
	which is equivariant with respect to the diagonal $ C_p $-action on the target and the action by $ C_p $ on the second factor on the source. 
	The shear map identifies $ g $ with the fold map $ \nabla : C_p^{\sqcup p} \to C_p $, i.e. there is a commutative diagram
	\begin{equation*}
	\begin{tikzcd}[row sep=tiny, cramped]
		C_p \times C_p \ar[rd,"{\pi_2 = g}"] & \\
									& C_p  \\
		\{0,1,\ldots,p-1\} \times C_p \ar[ru,"{\pi_2 = \nabla}"'] \ar[uu,"sh","{\rotatebox{90}{$\sim$}}"']
	\end{tikzcd}. 
	\end{equation*}
	Thus we see that the shear map induces an equivalence $ \iota_g \simeq \iota_{\alpha_3} \simeq \gamma_3  $.  

	Now restriction along $ \iota_T $ (Notation \ref{ntn:paramfinC2set_breakdown}) gives a natural transformation $ \beta $
	\begin{equation*}
	\begin{tikzcd}[row sep=tiny, column sep=large]
												&	\Fun\left( \mathcal{O}_{C_p}, \E_\infty \Alg\left(\Spectra^{C_p}\right)\right) \ar[dd,"{(-)^e}"]  \\
		C_p\E_\infty\Alg\left(\Spectra^{C_p}\right) \ar[ru,"{\iota_{\alpha_2}\simeq \gamma_2}"] \arrow[rd,"{\iota_{g}}"',""{name=T}]{} & \\
												&	\Fun\left( \mathcal{O}_{C_p},\E_\infty \Alg\left(\Spectra^{BC_p}\right) \right) \arrow[Rightarrow,from=1-2,to=T,"\beta"]
	\end{tikzcd}.
	\end{equation*}
	Since the back-to-front arrows in (\ref{diagram:index_underlying_of_norm}) are inert, $ \beta $ is a natural equivalence. 
	\qedhere
\end{enumerate}
\end{proof}

\subsection{A parametrized monoidal envelope}\label{subsection:param_monoidal_envelope}
To apply the Barr--Beck--Lurie theorem \cite[Proposition 4.7.3.22]{LurHA}, we will need to show that $ \gamma $ of the free $ C_p $-$ \E_\infty $-algebra on an $ \E_\infty $-algebra $ A $ computes the free normed algebra on $ A $.
A general strategy for understanding free $ C_p $-$ \E_\infty $ algebras is outlined in Remark 4.3.6 of \cite{NS22}; we introduce the ingredients first, then outline the strategy in Recollection \ref{recollection:freealg_colimit}. 
Then we apply the aforementioned general strategy to exhibit a formula for the free $ C_p $-$ \E_\infty $-algebra on an $ \E_\infty $-algebra $ A $ in Theorem \ref{theorem:freenormedalg_formula}. 

\begin{defn}\label{defn:operadic_envelope}
	\cite[Definition 2.8.4 \& Notation 2.8.3]{NS22}
	Let $ \mathcal{O}^\otimes $ be a $ \mathcal{T} $-operad. 
	let
	\begin{equation*}
		\Ar^{act}_{\mathcal{T}}(\mathcal{O}^\otimes) := \mathcal{T}^\op \times_{\Ar(\mathcal{T}^\op)} \Ar^{act}(\mathcal{O}^\otimes)
	\end{equation*}
	where $  \Ar^{act}(\mathcal{O}^\otimes) $ is the $ \mathcal{T} $-full subcategory on the active morphisms. 

	Suppose given a fibration of $ \mathcal{T}$-operads $ p \colon \mathcal{C}^\otimes \to \mathcal{O}^\otimes $. 
	The \emph{$ \mathcal{O} $-monoidal envelope of $ \cat^\otimes $} is
	\begin{equation*}
		\pi : \mathrm{Env}_{\mathcal{O},\mathcal{T}}(\mathcal{C})^\otimes := \mathcal{C}^\otimes \times_{\mathcal{O}^\otimes} \Ar^{act}_{\mathcal{T}}(\mathcal{O}^\otimes) \to \mathcal{O}^\otimes .
	\end{equation*}
	When $ \mathcal{O}  = \underline{\Fin}_{\mathcal{T},*} $ we drop it from notation. 
\end{defn}
\begin{rmk}\label{rmk:underlying_of_envelope}
	More informally, an object of $ \mathrm{Env}_{\mathcal{O},\mathcal{T}}(\mathcal{C})^\otimes $ is a pair $ (c, g \colon p(c) \to o) $ where $ c \in \mathcal{C}^\otimes $, $ o \in \mathcal{O}^\otimes $, $ g $ is a fiberwise active arrow in $ \mathcal{O}^\otimes $. 
	The forgetful map $ \pi $ takes this tuple to $ o $. 
	By \cite[Remark 2.8.5]{NS22}, the underlying $ \mathcal{T} $-$ \infty $-category of $ \mathrm{Env}_{\mathcal{O},\mathcal{T}}(\mathcal{C})^\otimes $ is $ \mathrm{Env}_{\mathcal{O},\mathcal{T}}(\mathcal{C}) \simeq \Com_{\mathcal{T}^\simeq, act}^\otimes $. 
\end{rmk}
\begin{prop}\label{prop:alg_extension_to_envelope}
	\cite[Proposition 2.8.7]{NS22} 
	Let $ p:  \cat^\otimes \to \mathcal{O}^\otimes $ be a fibration of $ \mathcal{T} $-$ \infty $-operads, and let $ \mathcal{D}^\otimes \to \mathcal{O}^\otimes $ be a cocartesian fibration of $ \mathcal{T} $-$ \infty $-operads. 
	Let $ i: \cat^\otimes \subseteq \mathrm{Env}_{\Com_{\mathcal{T}}}\left(\cat\right)^\otimes $ denote the inclusion of $ \cat^\otimes $ into its monoidal envelope. 
	Then there is an adjunction
	\begin{equation*}
	\begin{tikzcd}
		i_! : \Alg_{\mathcal{O}, \mathcal{T}}(\cat, \mathcal{D}) \ar[r,shift left=.4ex] & \Alg_{\mathcal{O}, \mathcal{T}}(\mathrm{Env}_{\mathcal{O}, \mathcal{T}}\left(\cat\right)^\otimes, \mathcal{D}) : i^*  \ar[l,shift left=.4ex]
	\end{tikzcd}
	\end{equation*}
	and $ i_! $ has essential image the full subcategory of the right-hand side given by $ \Fun^\otimes_{\mathcal{O}, \mathcal{T}}(\mathrm{Env}_{\mathcal{O}, \mathcal{T}}\left(\cat\right)^\otimes, \mathcal{D}) $. 
\end{prop}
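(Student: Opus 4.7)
The plan is to adapt Lurie's proof of the non-parametrized symmetric monoidal envelope adjunction \cite[Proposition 2.2.4.9]{LurHA} to the $ \mathcal{T} $-parametrized setting, leveraging the parametrized operadic left Kan extension machinery of Theorem \ref{thm:operad_forget_is_radjoint}. First, define $ i^* $ as restriction along $ i $; since $ i $ carries inert edges to inert edges (compatibly with the Segal condition \ref{defitem:param_segal_condition}, because $ i $ identifies $ \cat^\otimes $ with the locus of identity active arrows in the envelope), this restriction preserves the $ \mathcal{O} $-algebra condition and so lands in $ \Alg_{\mathcal{O},\mathcal{T}}(\cat,\mathcal{D}) $.

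Next, produce the left adjoint $ i_! $ by applying Theorem \ref{thm:operad_forget_is_radjoint} directly to $ i $ viewed as a fibration of $ \mathcal{T} $-$\infty$-operads over $ \mathcal{O}^\otimes $; this immediately yields the adjunction and computes $ i_! $ pointwise as a parametrized operadic left Kan extension in the sense of Definition \ref{defn:param_operadic_LKE}. The substantive content of the proposition is then the identification of the essential image of $ i_! $ with the full subcategory $ \Fun^\otimes_{\mathcal{O}, \mathcal{T}}(\mathrm{Env}_{\mathcal{O},\mathcal{T}}(\cat)^\otimes, \mathcal{D}) $ of $ \mathcal{T} $-symmetric monoidal functors, i.e.\ those $ \mathcal{O} $-algebras that additionally send cocartesian edges of $ \mathrm{Env}_{\mathcal{O},\mathcal{T}}(\cat)^\otimes $ lying over active morphisms in $ \mathcal{O}^\otimes $ to cocartesian edges in $ \mathcal{D}^\otimes $.

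To establish the essential image description, unpack the pointwise formula at an object $ (c, g\colon p(c) \to o) $ of the envelope (Remark \ref{rmk:underlying_of_envelope}): the relevant parametrized slice indexing the Kan extension colimit has the identity cocartesian lift of $ c $ over $ g $ as an initial object, so the colimit collapses and $ i_! F $ carries each such cocartesian lift to a cocartesian edge in $ \mathcal{D}^\otimes $. This gives containment of the image in $ \Fun^\otimes $. Conversely, for a strong $ \mathcal{T} $-monoidal $ F $, the cocartesian lifts furnish a natural equivalence $ F \simeq i_!(i^*F) $, exhibiting $ F $ as a left $ \mathcal{T} $-Kan extension in the sense of Definition \ref{defn:param_Kan_extension} and so placing $ F $ in the image of $ i_! $.

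The main obstacle will be the initial-object argument in the parametrized operadic slice at the essential image step: classically one verifies cofinality because the active-arrow slice $ (\cat^\otimes)^{act}_{/(c,g)} $ has a terminal object, and the parametrized analogue requires verifying the same cofinality fiberwise after passing through the parametrized fibers $ \underline{(c,g)} $ of Notation \ref{ntn:param_fiber}. Once this geometric input is in hand, comparing the universal properties of $ i_! $ with the inclusion $ \Fun^\otimes_{\mathcal{O},\mathcal{T}} \hookrightarrow \Alg_{\mathcal{O},\mathcal{T}} $ yields the stated equivalence between the essential image and the full subcategory of $ \mathcal{T} $-symmetric monoidal functors.
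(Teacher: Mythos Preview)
The paper does not supply a proof of this proposition; it is quoted directly from \cite[Proposition 2.8.7]{NS22} and used as a black box. There is therefore no ``paper's own proof'' to compare against.

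Your outline is the expected strategy and tracks Lurie's non-parametrized argument in \cite[Proposition 2.2.4.9]{LurHA}. One technical point: Theorem~\ref{thm:operad_forget_is_radjoint}, as stated in this paper, produces a left adjoint to restriction along a fibration $\cat^\otimes \to \mathcal{O}^\otimes$, i.e.\ from $\Alg_{\mathcal{O},\mathcal{T}}(\mathcal{E})$ to $\Alg_{\mathcal{O},\mathcal{T}}(\cat,\mathcal{E})$. The inclusion $i\colon \cat^\otimes \hookrightarrow \mathrm{Env}_{\mathcal{O},\mathcal{T}}(\cat)^\otimes$ is a map \emph{over} $\mathcal{O}^\otimes$, not a structure map \emph{to} $\mathcal{O}^\otimes$, so you cannot invoke that theorem verbatim. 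You would need the more general form of \cite[Theorem 4.3.4]{NS22} (left Kan extension along an arbitrary morphism of $\mathcal{T}$-$\infty$-operads over $\mathcal{O}^\otimes$), or else argue directly---as Lurie and Nardin--Shah do---by showing that any $\mathcal{O}$-algebra map out of $\cat^\otimes$ admits a $\mathcal{T}$-operadic left Kan extension along $i$ using the terminal-object-in-the-slice computation you describe, and that this extension is automatically strong monoidal. The essential-image argument you sketch is correct once this is in place.
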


\begin{recollection}\label{recollection:freealg_colimit}
	Consider $ \mathcal{O} = \underline{\Fin}_{\mathcal{T},*} $, $ \mathcal{E} = \left(\underline{\Spectra}^{C_2}\right)^\otimes $ (Notation \ref{ntn:indexing_normedrings}), and $ \cat = \Com_{\mathcal{T}^\simeq} $ (Example \ref{ex:indexingsystems}) in Theorem \ref{thm:operad_forget_is_radjoint}. 
	Then there is an adjunction
	\begin{equation*}
		F: \E_\infty\Alg(\Spectra^{C_2} ) \leftrightarrows C_2\E_\infty\Alg(\Spectra^{C_2} ) : G.
	\end{equation*}
	where $ G $ is from (\ref{eq:forget_paramalg_to_ordinary_calg}). 
	By Remarks 2.8.5 \& 4.3.6 of \cite{NS22}, the free $ C_2 $-$ \E_\infty $-algebra $ F(A) $ on an $ \E_\infty $-algebra in $ \Spectra^{C_2} $ is computed by the $ C_2 $-left Kan extension of $ i_! A^\otimes \colon \mathrm{Env}_{\mathcal{T}}\left(\Com_{\mathcal{T}^\simeq}\right)^\otimes \to \underline{\Spectra}^{C_2} $ along the structure map $ \pi : \mathrm{Env}_{\mathcal{T}}\left(\Com_{\mathcal{T}^\simeq}\right)^\otimes \to \Com_{\mathcal{T}}^\otimes $, where $ i_! $ is from Proposition \ref{prop:alg_extension_to_envelope}. 
\end{recollection}
\begin{theorem}\label{theorem:freenormedalg_formula}
	Let $ A \in \E_\infty\Alg\left(\Spectra^{C_p}\right)  $ (also see Lemma \ref{lemma:Einftyalg_2ways}) and consider the adjunction $ F \dashv G $ of Recollection \ref{recollection:freealg_colimit}. 
	\begin{enumerate}[label=(\arabic*)]
	\item The underlying $ C_p $-spectrum of the free $ C_p $-$ \E_\infty $ algebra $ F(A) $ on $ A $ is given by (via the recollement of Proposition \ref{prop:eqvtspectrarecollement})
	\begin{equation}\label{eq:freenormedalg_formula}
	F(A) \simeq
	\begin{tikzcd}
		&  A^{\varphi C_p} \otimes A^e_{hC_p} \ar[d,"{s_A \otimes \nu_A}"] \\
		A^e \ar[r,mapsto] & A^{tC_p}
	\end{tikzcd}
	\end{equation}
	where $ u $ is the unit, $ s_A : A^{\varphi C_p} \to A^{tC_p} $ is the structure map, and $ \nu_A $ is the twisted Tate-valued Frobenius (Definition \ref{defn:twisted_tate_frob}). 

	\item There is a canonical $ \E_\infty $ ring map $ \eta_A \colon A \to GF(A) $ given by $ \id_{A^{\varphi C_p}} \otimes (\eta_{A^e_{hC_p}}:\sphere^0 \to A^e_{hC_p}) $ on geometric fixed points and the identity on underlying.
	\end{enumerate}
\end{theorem}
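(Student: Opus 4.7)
The plan is to invoke Recollection \ref{recollection:freealg_colimit}, which expresses $F(A)$ as the parametrized operadic left Kan extension of $i_! A^\otimes$ along $\pi : \mathrm{Env}_{\mathcal{T}}(\Com_{\mathcal{T}^\simeq})^\otimes \to \Com_{\mathcal{T}}^\otimes$, and to compute this extension fiberwise via Definition \ref{defn:param_Kan_extension}. I would then extract the underlying $C_p$-spectrum using the recollement of Proposition \ref{prop:eqvtspectrarecollement} by identifying $F(A)^e$, $F(A)^{\varphi C_p}$, and the structure map $s_{F(A)}$ in turn.

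For the underlying $F(A)^e$, I would evaluate the Kan extension at the free orbit $V = C_p$. Since every finite $\mathcal{T}^{/C_p}$-set is free, the only active morphisms in $\Com_{\mathcal{T}}^\otimes$ over $C_p$ are coproducts of fold maps, which already belong to $\Com_{\mathcal{T}^\simeq}^\otimes$; hence $\pi$ restricts to an equivalence over $C_p$, and the Kan extension is essentially the identity there. Since $A^e$ is already an $\E_\infty$-algebra, this yields $F(A)^e \simeq A^e$ in $\E_\infty\Alg(\Spectra^{BC_p})$. For the geometric fixed points, I would evaluate at $V = C_p/C_p$. The parametrized fiber of $\pi$ over the basic object there is indexed by finite $C_p$-sets $U$ together with their unique equivariant map to $C_p/C_p$, and each such $U$ decomposes into free and fixed $C_p$-orbit summands. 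By the $\mathcal{T}$-Segal condition of Definition \ref{defn:param_operad}, the colimit factorizes into a tensor product of a fixed-orbit contribution, which reduces to $A^{C_p}$ (since $A^{C_p}$ is already $\E_\infty$), and a free-orbit contribution, which reduces to $N^{C_p}(A^e)$ (using the universal property of the norm). Applying $(-)^{\varphi C_p}$ and invoking $(N^{C_p}(A^e))^{\varphi C_p} \simeq A^e$ (Recollection \ref{rec:HHRnorm}), together with the $C_p$-symmetry on the free orbit which identifies the permutation action with the internal $C_p$-action on $A^e$, converts the free-orbit factor to $A^e_{hC_p}$. This gives $F(A)^{\varphi C_p} \simeq A^{\varphi C_p} \otimes A^e_{hC_p}$.

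Tracing the Segal decomposition across the recollement boundary then identifies $s_{F(A)}$ with $s_A \otimes \nu_A$; the factor $\nu_A$ appears precisely because the coherence forcing the norm on underlying to be the $p$-fold multiplication is encoded by the Tate diagonal of Definition \ref{defn:twisted_tate_frob}. For part (2), the unit $\eta_A \colon A \to GF(A)$ of $F \dashv G$ is induced by the inclusion of identity active arrows into the envelope; under the identifications above this evaluates to $\id_{A^{\varphi C_p}}$ tensored with the ring unit $\sphere^0 \to A^e_{hC_p}$ on geometric fixed points, and to the identity on underlying. The main obstacle will be the identification of $s_{F(A)}$ with $s_A \otimes \nu_A$: this requires carefully tracking how the $\mathcal{T}$-Segal condition decomposes the colimit, how this decomposition interacts with the recollement, and the distinctions between the diagonal, transposition, and trivial $C_p$-actions of Observation \ref{obs:clarifying_actions}.
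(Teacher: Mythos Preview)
Your overall strategy matches the paper's: both invoke Recollection \ref{recollection:freealg_colimit} to express $F(A)$ as a $\mathcal{T}$-left Kan extension along $\pi$, both compute $F(A)^e$ by observing the fiber over $C_p$ has a terminal object, and both extract $F(A)^{\varphi C_p}$ as a colimit over the fiber at $C_p/C_p$ and then read off the structure map via the recollement.

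The substantive gap is in how you justify the factorization of the $C_p/C_p$-fiber colimit into a tensor product of a fixed-orbit piece and a free-orbit piece. You appeal to ``the $\mathcal{T}$-Segal condition'' for this, but the Segal condition only tells you that the \emph{value} of $i_!A^\otimes$ on a $C_p$-set $U$ decomposes as a tensor product indexed by orbits of $U$; it does not by itself tell you that the \emph{colimit} over the indexing category $\left(\Com_{\mathcal{T}^\simeq,act}^\otimes\right)_{\underline{x},C_p/C_p}$ splits along orbit types. The paper supplies exactly this missing step as a separate cofinality lemma (Lemma \ref{lemma:freeC2alg_colimdiagram}): the inclusion of the full subcategory $BC_p \sqcup * \hookrightarrow \left(\Com_{\mathcal{T}^\simeq,act}^\otimes\right)_{\underline{x},C_p/C_p}$ spanned by the single object $[C_p \sqcup C_p/C_p \to C_p/C_p]$ is cofinal. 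This is proved by factoring through the auxiliary operad $\mathrm{Sub} \simeq \Fin_* \boxplus \Fin_*$ (Recollection \ref{rec:finset_coproduct_operad}): one checks that the ``orbit-type'' projection $Q \colon \Com_{\mathcal{O}_{C_p}^\simeq} \to \mathrm{Sub}$ is a cocartesian fibration (Lemma \ref{lemma:projection_orbits_cocartesian}), hence smooth, and that the analogous cofinality statement holds in $\Env_{\Fin_*}(\mathrm{Sub})$ (Lemma \ref{lemma:cofinality}). Once the indexing category is replaced by $BC_p \sqcup *$, the colimit is visibly $N^{C_p}(A^e)_{hC_p} \otimes A$ in $\Spectra^{C_p}$, and applying $(-)^{\varphi C_p}$ gives your formula directly.

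In short: your proof would go through once you insert this cofinality argument; without it the ``factorization by Segal condition'' step is an assertion rather than a proof.
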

\begin{proof}
	By Recollection \ref{recollection:freealg_colimit}, the $ C_p $-$ \E_\infty $-algebra $ F(A) $ may be computed as the $ C_p $-left Kan extension of $ i_! A^\otimes \colon \mathrm{Env}_{\mathcal{T}}(\Com_{\mathcal{T}^\simeq})^\otimes \to \left(\underline{\Spectra}^{C_2}\right)^{\otimes} $ along the structure map $ \pi : \mathrm{Env}_{\mathcal{T}}(\Com_{\mathcal{T}^\simeq})^\otimes \to \Com_{\mathcal{T}}^\otimes $. 
	
	Denote $ x = [C_p/C_p = C_p/C_p] \in \Com_{\mathcal{T}}^\otimes $. 
	In particular, the $ C_p $-spectrum underlying $ F(A) $ may be computed as the $ C_p $-left Kan extension of $ {i_! A^\otimes_{\underline{x}} \colon \left(\Com_{\mathcal{T}^\simeq, act}^\otimes\right)_{\underline{x}} \to \underline{\Spectra}^{C_p}} $ along the structure map $ \pi_{\underline{x}} : \left(\Com_{\mathcal{T}^\simeq, act}^\otimes\right)_{\underline{x}} \simeq \mathrm{Env}_{\mathcal{T}}(\Com_{\mathcal{T}^\simeq})^{\otimes}_{\underline{x}} \to \left(\Com_{\mathcal{T}}^\otimes\right)_{\underline{x}} \simeq \mathcal{T}^\op $. 

	Let $ I = \left(\Com_{\mathcal{T}^\simeq, act}^\otimes\right)_{\underline{x}} \xrightarrow{q} \mathcal{T}^\op $ be shorthand for our indexing diagram and write $ I_{C_p} $ and $ I_{C_p/C_p} $ for the respective fibers (not parametrized fibers). 
	We will write $ F|_{C_p} $ for the restriction of a diagram $ F $ defined on $ I $ to $ I_{C_p} $.

	By definition of a $ C_2 $-left Kan extension and Definition 5.2 of \cite{Shah18}, we seek a $ \mathcal{T}^\op $-initial lift 
	\begin{equation}\label{eq:param_undercategory}
	\begin{tikzcd}
		& \underline{\Fun}_{\mathcal{T}^\op}\left( I \star_{\mathcal{T}^\op} \mathcal{T}^\op, \underline{\Spectra}^{C_p} \right) \ar[d] \\
		\mathcal{T}^\op \ar[r,"{i_! A^\otimes_{\underline{x}}}"'] \ar[ru,dotted,"\widetilde{F(A)}"] & \underline{\Fun}_{\mathcal{T}^\op}\left( I , \underline{\Spectra}^{C_p} \right)
	\end{tikzcd}.
	\end{equation} 
	Informally, a lift $ \widetilde{F(A)} $ of (\ref{eq:param_undercategory}) is the data of 
	\begin{itemize}
		\item a cocartesian section $ F(A) \colon \mathcal{T}^\op \to \underline{\Spectra}^{C_p} $ 
		\item a morphism $ \beta $ from $ \left. i_! A^\otimes_{\underline{x}}\right|_{C_p} $ to the constant $ I_{C_p} $-indexed diagram at $ F(A)(C_p) $ in $ \Spectra^{BC_p} $
		\item a morphism $ \alpha $ from $ \left. i_! A^\otimes_{\underline{x}} \right|_{C_p/C_p} $ to the constant $ I_{C_p/C_p} $-indexed diagram at $ F(A)(C_p/C_p) $ in $ \Spectra^{C_p} $
		\item Choose a functor $ R \colon I_{C_p/C_p} \to I_{C_p} $ classified by the map $ C_p \surj C_p/C_p $. 
		Then we require the data of an equivalence $ (\alpha)^e \simeq \beta \circ R $ of natural transformations. 
	\end{itemize}
	Now notice that the diagram $ i_! A^\otimes_{\underline{x}}|_{C_p} $ is defined on $ \left(\Com_{\mathcal{T}^\simeq,act}^\otimes\right)_{\underline{x},C_p} $, which has a final object $ {[C_p = C_p]} $. 
	Thus for $ \widetilde{F(A)}(C_p) $ to be an initial object of the $ C_p $-fiber of (\ref{eq:param_undercategory}), we must have $ {F(A)^e \simeq A^e} $. 
	An initial object of the $ C_p/C_p $-fiber of (\ref{eq:param_undercategory}) is equivalently an object $ F(A) : \mathcal{T}^\op \to \underline{\Spectra}^{C_p} $ representing the functor
	\begin{equation*}
	\begin{split}
		\underline{\Spectra}^{C_p} &\to \Spc \\
		B &\mapsto \hom_{\underline{\Fun}_{\mathcal{T}^\op}(I,\underline{\Spectra}^{C_p})}\left(i_! A^\otimes_{\underline{x}}, q^* B\right) .
	\end{split}
	\end{equation*}
	By a similar argument to our earlier discussion of morphisms in categories of cocartesian sections (Observation \ref{obs:morphisms_normedrings}) and Proposition \ref{prop:eqvtspectrarecollement}, the space of morphisms from the diagram $ i_! A^\otimes_{\underline{x}} $ to $ q^*B $ sits in a fiber sequence 
	\begin{equation*}
	\begin{tikzcd}
		\fib\left(\hom_{\Fun\left(I|_{C_p/C_p},Ar(\Spectra)\right)}\left(\morphism{(i_! A^\otimes_{\underline{x}}|_{C_p/C_p})^{\varphi C_p}}{}{(i_! A^\otimes_{\underline{x}}|_{C_p/C_p})^{tC_p}},\morphism{(q^*B|_{C_p/C_p})^{\varphi C_p}}{}{q^*B|_{C_p/C_p}^{tC_p}}\right)\to \hom_{\Fun\left(I|_{C_p},\Spectra\right)}\left((i_! A^\otimes_{\underline{x}}|_{C_p})^{tC_p},q^*B|_{C_p}^{tC_p}\right)\right) \ar[d] \\
		\hom_{\Fun_{\mathcal{T}^\op}\left(I,\underline{\Spectra}^{C_p}\right)}\left(i_! A^\otimes_{\underline{x}}, q^*B\right) \simeq 
		\hom_{\Fun\left(I|_{C_p/C_p},\underline{\Spectra}^{C_p}\right)}\left(i_! A^\otimes_{\underline{x}}|_{C_p/C_p}, q^*B|_{C_p/C_p}\right) \ar[d,"{(-)^e}"] \\
		{\hom_{\Fun_{BC_p}\left(I|_{C_p},\underline{\Spectra}^{BC_p}\right)}(i_! A^\otimes_{\underline{x}}|_{C_p}, q^*B|_{C_p})} 
	\end{tikzcd}.
	\end{equation*}
	By the previous discussion, we have $ {\hom_{\Fun_{BC_p}\left(I|_{C_p},\underline{\Spectra}^{BC_p}\right)}(i_! A^\otimes_{\underline{x}}|_{C_p}, q^*B^e)} \simeq \hom_{\Spectra^{BC_p}}(A^e, B^e) $. 
	Thus we see that for $ \widetilde{F(A)}(C_p/C_p) $ to be an initial object of the $ C_p/C_p $-fiber of (\ref{eq:param_undercategory}), it suffices to take $ F(A)^{\varphi C_p} $ to be the colimit of the diagram
	\begin{equation*}
		 \left(i_! A^\otimes_{\underline{x}}\right)^{\varphi C_p} \colon \left(\Com_{\mathcal{T}^\simeq,act}^\otimes\right)_{\underline{x}, C_p/C_p} \to \Spectra . 
	\end{equation*}
	By Lemma \ref{lemma:freeC2alg_colimdiagram}, the $ C_p $-left Kan extension of $ i_! A^\otimes_{\underline{x}} $ along $ \pi_{\underline{x}} : \Env( \Com_{\mathcal{T}^\simeq})_{\underline{x}} \to \left(\Com_{\mathcal{T}}\right)_{\underline{x}} $ is computed on $ C_p $ geometric fixed points by (\ref{eq:freenormedalg_formula}). 
	
	The existence of the unit $ \eta_A $ follows from monadicity (Proposition \ref{prop:forgetC2ringmap_monadic}), and its exact form follows from tracing through the definition of $ C_p $-left Kan extension. 
\end{proof}
\begin{warning}
	The $ G $-$ \infty $-category of $ G $-spectra $ \underline{\Spectra}^G $ is \emph{not} an example of the $ G $-category of objects of Construction \ref{cons:Tcat_of_objects}. 
	Thus many of the techniques to compute $ G $-left Kan extensions of \cite{Shah18} do not apply to our proof of Theorem \ref{theorem:freenormedalg_formula}. 
\end{warning}
\begin{lemma}\label{lemma:freeC2alg_colimdiagram}
	Consider the fiber $ \left(\Com_{\mathcal{T}^\simeq}^\otimes\right)_{act, C_p/C_p} $ (not parametrized fiber) over $ C_p/C_p $ of $ \left(\Com_{\mathcal{T}^\simeq}^\otimes\right)_{act} $. 
	The inclusion 
	\begin{equation*}
		\iota \colon BC_p \sqcup * \inj \left(\Com_{\mathcal{T}^\simeq}^\otimes\right)_{act, C_p/C_p} 
	\end{equation*} 
	of the full subcategory spanned by the object 
	\begin{equation*}
	\begin{tikzcd}[cramped]
		C_p \sqcup C_p/C_p \ar[r] \ar[d]  & C_p/C_p \ar[d,equals] \\
		C_p/C_p \ar[r,equals] & C_p/C_p
	\end{tikzcd}
	\end{equation*} is cofinal.
\end{lemma}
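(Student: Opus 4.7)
The plan is to apply the cofinality criterion of \cite[Theorem 4.1.3.1]{LurHTT}, which reduces the problem to showing that for every object $\ell$ of $L := \left(\Com_{\mathcal{T}^\simeq}^\otimes\right)_{act, C_p/C_p}$, the slice $K \times_L L_{\ell/}$ is weakly contractible, where $K = BC_p \sqcup *$.

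First, I would make $L$ explicit using Example~\ref{ex:indexingsystems}. Objects at level $C_p/C_p$ are pointed finite $C_p$-sets over $C_p/C_p$, and active morphisms are those whose forward leg is a coproduct of fold maps. Using the $0$-semiadditivity of $\underline{\Fin}_{C_p,*} \subset \underline{\Span}(\Fin_{C_p})$ recalled in Notation~\ref{ntn:paramfinC2set_breakdown}, each object of $L$ decomposes canonically into orbit summands of two types: the free orbit $C_p$, which carries a $C_p$-automorphism inherited from $\mathcal{O}_{C_p}$ via the cocartesian structure, and the trivial orbit $C_p/C_p$ with no nontrivial automorphisms. The subcategory $K = BC_p \sqcup *$ is then realized as the full subcategory on the two orbit summands of the named object $[C_p \sqcup C_p/C_p \to C_p/C_p]$, with $BC_p$ recording the automorphism data of the free summand and $*$ that of the trivial summand.

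Next, I would analyze $K \times_L L_{\ell/}$ by cases on the orbit decomposition $\ell \cong (C_p)^{\sqcup a} \sqcup (C_p/C_p)^{\sqcup b}$. In the base cases $(a,b) \in \{(1,0),(0,1)\}$, the slice reduces to the appropriate component of $K$ together with its automorphism action, which is visibly weakly contractible (the quotient of a $C_p$-torsor by its $C_p$-action in the free case, and a point in the trivial case). For general $\ell$, the strategy is to use the canonical orbit-projection morphisms in $L$ (afforded by the fold-map structure and the $0$-semiadditive hom-sum) to exhibit the slice as a single contractible space built from these projections.

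The main obstacle is the mixed case $a, b \geq 1$: naively the slice appears to decompose into several disjoint components, one per orbit of $\ell$ admitting a projection to a component of $K$, which would contradict contractibility. The resolution requires leveraging the full operadic structure of $\Com_{\mathcal{T}^\simeq}^\otimes$: the fold maps between orbit arrangements together with the $C_p$-automorphism action must collectively collapse these components via higher cells. I anticipate identifying the slice explicitly with the classifying space of a translation groupoid (e.g., for the $\Sigma_a \wr C_p \times \Sigma_b$-action on ordered orbit arrangements of $\ell$) and verifying contractibility from this model.
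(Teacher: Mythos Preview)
Your route differs from the paper's, and the gap you flag in the mixed case is not closed by your proposal. The paper does not analyse the comma categories $K\times_L L_{\ell/}$ directly. It constructs a functor $Q$ from $L$ to the category $\mathrm{Sub}$ of Recollection~\ref{rec:finset_coproduct_operad} recording the orbit-type decomposition of a finite $C_p$-set (Lemma~\ref{lemma:projection_orbits_cocartesian}), observes that $Q$ is a cocartesian fibration and hence smooth \cite[Proposition~4.1.2.15]{LurHTT}, identifies $\iota$ as the pullback along $Q$ of the inclusion of a single object, and then invokes the fact that cofinal maps are stable under pullback along smooth maps \cite[Remark~4.1.2.10]{LurHTT}. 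The residual cofinality downstairs (Lemma~\ref{lemma:cofinality}) is immediate because that object is terminal among active morphisms. This factorisation is the entire content of the argument and bypasses your case analysis altogether.

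Your anticipated resolution of the mixed case does not work as written. Active morphisms in $\Com_{\mathcal{T}^\simeq}^\otimes$ over $C_p/C_p$ are coproducts of fold maps $V^{\sqcup n}\to V$ with $V\in\mathcal{O}_{C_p}$ (Example~\ref{ex:indexingsystems}) and therefore preserve orbit type; under your description of $L$ there are then \emph{no} morphisms from a mixed $\ell=C_p^{\sqcup a}\sqcup(C_p/C_p)^{\sqcup b}$ to either single-orbit object of $K$, so the slice is empty rather than merely disconnected. No automorphism data or ``higher cells'' can manufacture objects in an empty category, and the classifying space of the group $\Sigma_a\wr C_p\times\Sigma_b$ you name is in any case not contractible. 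If you want a direct argument you must first revisit precisely which category $L$ and which subcategory $K$ are intended; the paper's factorisation through $Q$ is designed exactly to sidestep this difficulty.
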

\begin{proof}
	Since we consider the fiber over $ C_p/C_p $, the target is always $ C_p/C_p $ and we omit it throughout the following proof. 
	Observe that there is a pullback diagram of simplicial sets
	\begin{equation*}
	\begin{tikzcd}[cramped]
		BC_p \sqcup * \ar[r,"\iota"] \ar[d] & \left(\Com_{\mathcal{T}^\simeq}^\otimes\right)_{act, C_p/C_p} \ar[d,"Q"] \\
		\{\langle 2 \rangle, \langle 1 \rangle, \{*, 2\} \} \ar[r] & \mathrm{Sub}
	\end{tikzcd}.
	\end{equation*}
	By Lemma \ref{lemma:projection_orbits_cocartesian} and \cite[Proposition 4.1.2.15]{LurHTT}, $ Q $ is smooth. 
	By Remark 4.1.2.10 of \emph{loc. cit.} and Lemma \ref{lemma:cofinality}, $ \iota $ is cofinal. 
\end{proof}

\begin{recollection}\label{rec:finset_coproduct_operad}
	Recall the category $ \mathrm{Sub} $ of \cite[Definition 2.2.3.2]{LurHA} which was defined to have
	\begin{enumerate}[label=(\alph*)]
		\item objects of $ \mathrm{Sub} $ are triples $ (\langle n\rangle,S,T) $ where $ S, T \subseteq \langle n \rangle $ such that $ S \cup T = \langle n \rangle $ and $ S \cap T = \langle n \rangle $.
		\item a morphism of $ \mathrm{Sub} $ from $ (\langle n\rangle,S,T) $ to $ (\langle n'\rangle,S',T') $ is a pointed map $ f : \langle n\rangle \to \langle n' \rangle $ such that $ f(S) \subseteq S' $ and $ f(T) \subseteq T' $.
	\end{enumerate}
	The $ \infty $-category $ \mathrm{Sub} $ is an $ \infty $-operad by Proposition 2.2.3.5 of \emph{loc. cit.} applied to $ \cat^\otimes = \mathcal{D}^\otimes = \Fin_* $. 
	Write $ \pi : \mathrm{Sub} \to \Fin_* $ for the structure map. 
\end{recollection}
\begin{lemma}\label{lemma:cofinality}
	Let $ \mathcal{A} \subseteq \mathrm{Env}_{\Fin_*}(\mathrm{Sub}) $ denote the full subcategory on the object $ (a,\pi(a) \to \langle 1 \rangle) $ for $ a = (\langle 2 \rangle, \langle 1 \rangle, \{2,*\} ) $.
 	Then the inclusion
	\begin{equation*}
		\iota \colon \mathcal{A} \longrightarrow \Env_{\Fin_*} (\mathrm{Sub})
	\end{equation*}
	is cofinal.
\end{lemma}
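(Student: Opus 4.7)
The plan is to prove the stronger statement that $y := (a, g_a)$—where $g_a \colon \langle 2 \rangle \to \langle 1 \rangle$ is the unique active pointed map—is a \emph{terminal} object of $\Env_{\Fin_*}(\mathrm{Sub})$. Since the inclusion of (the full subcategory on) a terminal object is always cofinal—for every $X$ the slice $\mathcal{A} \times_{\Env_{\Fin_*}(\mathrm{Sub})} \Env_{\Fin_*}(\mathrm{Sub})_{X/}$ collapses to the contractible mapping space $\Map_{\Env_{\Fin_*}(\mathrm{Sub})}(X, y)$—this would suffice.

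To carry out the terminality verification, I would first unpack what a morphism in $\Env_{\Fin_*}(\mathrm{Sub}) = \mathrm{Sub} \times_{\Fin_*} \Ar^{act}(\Fin_*)$ looks like after restricting to the fiber over $\langle 1 \rangle \in \Fin_*$. A morphism from $X = ((\langle n \rangle, S, T), g \colon \langle n \rangle \to \langle 1 \rangle)$ to $y$ is precisely a pointed map $f \colon \langle n \rangle \to \langle 2 \rangle$ satisfying three constraints: $f(S) \subseteq \{*, 1\}$, $f(T) \subseteq \{*, 2\}$ (the conditions for a morphism in $\mathrm{Sub}$), and $g_a \circ f = g$ (the compatibility imposed by the arrow-category factor, with identity at the target $\langle 1 \rangle$).

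The main observation is that, since $g$ is active and $g_a$ sends only $*$ to $*$, the condition $g_a \circ f = g$ forces $f(i) \neq *$ for every non-basepoint $i$. Combining this with the preceding inclusion conditions and the defining axioms $S \cup T = \langle n \rangle$, $S \cap T = \{*\}$ of $\mathrm{Sub}$ (Recollection \ref{rec:finset_coproduct_operad}), the map $f$ is uniquely determined: $f(i) = 1$ for $i \in S \setminus \{*\}$ and $f(i) = 2$ for $i \in T \setminus \{*\}$. Since $\mathrm{Sub}$ and $\Ar^{act}(\Fin_*)$ are strict $1$-categories, $\Map_{\Env_{\Fin_*}(\mathrm{Sub})}(X, y)$ is discrete, so this verifies that it has exactly one element, establishing terminality of $y$.

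The only delicate aspect is correctly identifying morphisms in the monoidal envelope from its defining fiber product—in particular remembering that, in the fiber over $\langle 1 \rangle$, the compatibility square in $\Ar^{act}(\Fin_*)$ simplifies to the single equation $g_a \circ \pi(f) = g$. Once this unpacking is in hand the rest is a short bookkeeping argument on pointed finite sets, so I do not expect substantial difficulty beyond setting up notation carefully.
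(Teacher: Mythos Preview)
Your proposal is correct and follows essentially the same route as the paper's proof: both verify that the object $(a,g_a)$ is terminal in $\Env_{\Fin_*}(\mathrm{Sub})$ by exhibiting the unique map sending $S\setminus\{*\}$ to $1$ and $T\setminus\{*\}$ to $2$. You are in fact slightly more careful than the paper, explicitly using the compatibility $g_a\circ f = g$ to rule out $f(i)=*$ (i.e.\ to force $f$ active), whereas the paper's sentence ``there is a unique morphism $(\langle n\rangle,S,T)\to(\langle 2\rangle,\langle 1\rangle,\{2,*\})$'' tacitly means unique among active morphisms.
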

\begin{proof}
	We verify criterion (2) of \cite[Theorem 4.1.3.1]{LurHTT}. 
	Observe that for every object $ (\langle n\rangle,S,T) $ of $ \mathrm{Sub} $, there is a unique morphism $ (\langle n\rangle,S,T) \to (\langle 2 \rangle, \langle 1 \rangle, \{2,*\} ) $ which sends all $ s \in S \setminus\{*\} $ to $ 1 $ and all $ t \in T \setminus \{*\} $ to $ 2 $.  
\end{proof}
\begin{lemma}\label{lemma:projection_orbits_cocartesian}
	There is a functor $ Q \colon \Com_{\mathcal{O}_{C_p}^{\simeq}} \to \mathrm{Sub} \simeq \Fin_* \boxplus \Fin_* $ which takes a $ C_p $-set $ T $ to its set of $ C_p $-orbits grouped by orbit type, i.e. $ Q : C_p \mapsto (\langle 1 \rangle, \langle 1 \rangle, \{*\}) $ and $ Q : C_p/C_p \mapsto (\langle 1 \rangle, \{*\}, \langle 1 \rangle) $. 
	The functor $ Q $ is a coCartesian fibration classified by the functor
	\begin{equation*}
		\begin{split}
			\mathrm{Sub} &\to \Cat \\
			(\langle n\rangle,S,T) &\mapsto \bigsqcup_{S} BC_p \sqcup \bigsqcup_T * 
		\end{split}. 
	\end{equation*}
\end{lemma}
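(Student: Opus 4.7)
The plan is to exhibit $Q$ as the Grothendieck construction (unstraightening) of the stated classifier $\Psi: \mathrm{Sub} \to \Cat$, since any such unstraightening is automatically a coCartesian fibration classifying $\Psi$. This reduces the lemma to two tasks: (i) constructing $\Psi$, and (ii) exhibiting an equivalence over $\mathrm{Sub}$ between $\int \Psi$ and (the relevant part of) $\Com_{\mathcal{O}_{C_p}^\simeq}$.

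To construct $\Psi$, I would use the operadic coproduct description $\mathrm{Sub} \simeq \Fin_* \boxplus \Fin_*$ from Recollection \ref{rec:finset_coproduct_operad} together with the universal property of the coproduct in $\infty$-operads: a symmetric monoidal functor out of $\mathrm{Sub}$ valued in $\Cat$ is determined by its restrictions to the two $\Fin_*$-factors, i.e. by a pair of commutative monoid objects in $\Cat$. I would assign to the first factor the classifier of the free symmetric monoidal $\infty$-category generated by the object $BC_p$, and to the second factor the analogous classifier generated by a single point. The resulting $\Psi$ then sends $(\langle n \rangle, S, T)$ to $\bigsqcup_{S \setminus \{*\}} BC_p \sqcup \bigsqcup_{T \setminus \{*\}} *$, with the action on morphisms governed by fold maps on the $BC_p$-components and by pointed maps on the $*$-components.

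Next, I would identify $\int \Psi \to \mathrm{Sub}$ with $Q$. Over $(\langle n \rangle, S, T)$, an object of $\int \Psi$ picks an object in each summand of the fiber of $\Psi$; since each $BC_p$-summand and each $*$-summand has a unique object, this is equivalent to specifying a pointed finite $C_p$-set whose free orbits are labeled by $S \setminus \{*\}$ and whose trivial orbits are labeled by $T \setminus \{*\}$, matching objects of $\Com_{\mathcal{O}_{C_p}^\simeq}$ in the corresponding fiber of $Q$. A morphism in $\int \Psi$ covering $f: (\langle n \rangle, S, T) \to (\langle n' \rangle, S', T')$ consists of $f$ together with, for each target free-orbit summand, a morphism in $BC_p$ (i.e. an element of $C_p$). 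This data matches exactly that of a fold-type morphism in $\Com_{\mathcal{O}_{C_p}^\simeq}$: source orbits fold into target orbits according to $f$, with the $S$-part twisted by $C_p$-automorphisms of each target free orbit, while the $T$-part folds uniquely.

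The main delicate point will be verifying that $\Map_{\mathcal{O}_{C_p}}(C_p, C_p)$ is precisely the discrete set $C_p$, so that the free commutative monoid on the single generator $C_p$ contributes $BC_p$ in the free-orbit slot, and that the $C_p$-equivariant fold-type morphisms of finite $C_p$-sets, decomposed by orbit type, agree on the nose with the morphisms in $\int \Psi$. Once these match, the lemma follows from the universal property of unstraightening, which simultaneously delivers the coCartesian fibration structure and the identification of its classifier with $\Psi$.
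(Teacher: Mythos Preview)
The paper states this lemma without proof, so there is no original argument to compare against; your strategy—construct the classifier $\Psi$ via the operadic coproduct $\mathrm{Sub}\simeq\Fin_*\boxplus\Fin_*$ and then identify its unstraightening with the domain of $Q$—is the natural one and is the kind of argument one would expect.

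That said, there is a genuine gap in your step (ii). You write that over $(\langle n\rangle,S,T)$ ``an object of $\int\Psi$ picks an object in \emph{each} summand of the fiber of $\Psi$,'' and then identify this with a finite $C_p$-set whose orbits are labeled by $S$ and $T$. But the stated fiber $\Psi(\langle n\rangle,S,T)=\bigsqcup_{S}BC_p\sqcup\bigsqcup_{T}*$ is a \emph{coproduct} of categories, not a product: an object of a coproduct lives in exactly one summand. Thus an object of $\int\Psi$ over $(\langle n\rangle,S,T)$ is a choice of a single index $i\in\langle n\rangle^\circ$ (together with the unique object of the corresponding $BC_p$ or $*$), not a family of choices indexed by all of $\langle n\rangle^\circ$. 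Your identification with ``a pointed finite $C_p$-set whose free orbits are labeled by $S\setminus\{*\}$ and whose trivial orbits are labeled by $T\setminus\{*\}$'' is the identification one would make for the \emph{product} classifier $\prod_{S}BC_p\times\prod_{T}*$, not the coproduct classifier in the lemma. The same conflation propagates to your description of morphisms.

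Before repairing the argument you should pin down exactly which category the paper intends by $\Com_{\mathcal{O}_{C_p}^\simeq}$ here and check that its fibers over $\mathrm{Sub}$ really are the stated coproducts: the way the lemma is invoked in the proof of Lemma~\ref{lemma:freeC2alg_colimdiagram} (where the pullback over a single object of $\mathrm{Sub}$ is asserted to be $BC_p\sqcup *$) is consistent with the coproduct formula but not with the product reading your identification presupposes.
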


\begin{construction}\label{cons:calg_map_out_freeparam_calg}
	Let $ A \in \E_\infty\Alg(\Spectra^{C_p}) $. 
	Given any $ (B, n_B \colon N^{C_p} B \to B ) \in N\E_\infty\Alg\left(\Spectra^{C_p}\right) $, there is a canonical map
	\begin{equation*}
		f: \hom_{\E_\infty\Alg(\Spectra^{C_p})}(A, B) \to \hom_{N\E_\infty\Alg(\Spectra^{C_p}) }(\gamma F(A), (B, n_B \colon N^{C_p} B \to B )). 
	\end{equation*}
	By Observation \ref{obs:morphisms_normedrings} and Corollary \ref{cor:eqvtalg_recollement}, we may define $ f $ `componentwise.'
	Denote $ \hom_{\mathcal{N}(i)}(p_iF(A), p_i(B)) $ by $ M_i $ (Definition \ref{defn:normedrings}). 
	We have 
	\begin{equation*}
	\begin{split}
		 M_0 = {\hom_{\E_\infty\Alg(\Spectra^{C_p})}\left(F(A), B \right)} &\simeq \hom_{\E_\infty\Alg} \left(
		A,B \right) \fiberproduct_{\hom\left(A^{tC_p}, B^{tC_p}\right)} 
		\hom \left(
		\begin{tikzcd}[cramped,column sep=small,ampersand replacement=\&]
			 A^{e}_{hC_p} \ar[d] \& B^{\varphi C_p} \ar[d] \\
			 A^{tC_p}, \& B^{tC_p}
		\end{tikzcd} \right) \\
		&=: M_0' \times_{M^t} M_0''		
	\end{split}.
	\end{equation*}
	Take the identity on $ M_0' $ and define $ \hom_{\E_\infty\Alg(\Spectra^{C_p})}(A, B) \to M_0'' $ to be the composite
	\begin{equation*}
	 	f_0''  \colon \hom_{\E_\infty\Alg(\Spectra^{C_p})}(A, B) \xrightarrow{N^{C_p}} \hom_{\E_\infty\Alg(\Spectra^{C_p})}\left(N^{C_p}A, N^{C_p}B\right) \xrightarrow{g} M' 
	\end{equation*} 
	where $ g $ takes $ h: N^{C_p}A \to N^{C_p}B $ to the outermost trapezoid in the commutative diagram
	\begin{equation*}
	\begin{tikzcd}
		A^e \ar[d,"\Delta"] \ar[rr,"h"] && B^{e} \ar[d,"\Delta"] \ar[rd,"{n_B^{\varphi C_p}}"] & \\
		((A^e)^{\otimes p})^{tC_p} \ar[rr,"{(h^{\otimes p})^{tC_p}}"] \ar[d,"{m^{tC_p}}"'] && ((B^e)^{\otimes p})^{tC_p} \ar[rd, "{n_B^{tC_p}}"] & B^{\varphi C_p} \ar[d] \\
		A^{tC_p} \ar[rrr,"{h^{tC_p}}"] && & B^{tC_p}
	\end{tikzcd}
	\end{equation*}
	where by definition $ n_B^{tC_p} \simeq m_{B^e}^{tC_p} $ and the lower trapezoid is the Tate construction $ (-)^{tC_p} $ on $ m $ of (\ref{eq:calg_multiplication_functor}) applied to $ h $. 
	Clearly $ f_0' $ and $ f_0'' $ lift canonically to a functor $ f_0 : \hom_{\E_\infty\Alg(\Spectra^{C_p})}(A, B) \to M_0 $. 

	Take $ f_4: \hom_{\E_\infty\Alg(\Spectra^{C_p})}(A, B) \to M_4 $ to be the product $ N^{C_p}(-^e) \times \id $. 
	The map $ f_4 $ clearly lifts to $ f_2: \hom_{\E_\infty\Alg(\Spectra^{C_p})}(A, B) \to M_2 $ and is identified canonically with $ (N^{C_p}(-^e) \times \id) \circ f_0 $.  
	Since $ F(A)^e \simeq A^e $, we may define $ f_3 $ and $ f_5 $ as $ m_{(-)^e} $ and $ (-^e)^{\otimes 2} \times (-)^e $, respectively. 

	The $ f_i $ assemble to give the desired map. 
\end{construction}

\subsection{Proof of main theorem}\label{subsection:mainthm_proof}
Equipped with an explicit description of the free $ C_p $-$ \E_\infty $-algebra on an $ \E_\infty $-algebra $ A $ in $ C_p $-spectra from the previous section, here we show that $ \gamma $ of the free $ C_p $-$ \E_\infty $-algebra on an $ \E_\infty $-algebra $ A $ computes the free normed algebra on $ A $ using our description of mapping spaces in the category of normed $ \E_\infty $-algebras. 
The main result then follows from an application of the Barr--Beck--Lurie theorem. 
\begin{theorem} \label{thm:mainthm} 
	The functor $ \gamma \colon C_p\E_\infty\Alg\left(\Spectra^{C_p}\right) \to N\E_\infty\Alg\left(\Spectra^{C_p}\right)$ of Corollary \ref{cor:operadic_diagrammatic_comparison} is an equivalence. 
\end{theorem}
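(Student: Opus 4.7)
The plan is to apply a relative form of the Barr--Beck--Lurie theorem (\cite[Proposition 4.7.3.22]{LurHA}) to the triangle of right adjoints
\begin{equation*}
\begin{tikzcd}[column sep=small]
C_p\E_\infty\Alg(\Spectra^{C_p}) \ar[rr,"\gamma"] \ar[rd,"G"'] & & N\E_\infty\Alg(\Spectra^{C_p}) \ar[ld,"G'"] \\
& \E_\infty\Alg(\Spectra^{C_p}) &
\end{tikzcd}
\end{equation*}
over $\E_\infty\Alg(\Spectra^{C_p})$. To conclude that $\gamma$ is an equivalence, it suffices to verify three things: (i) the triangle commutes, (ii) both $G$ and $G'$ are monadic, and (iii) $\gamma$ carries free $C_p$-$\E_\infty$-algebras to free normed algebras, equivalently, the canonical natural transformation $F' \Rightarrow \gamma \circ F$ between the left adjoints is an equivalence.

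For step (i), the identification $G' \circ \gamma \simeq G$ follows directly from Construction \ref{cons:restrictions} and Proposition \ref{prop:restrictions_compatibilities}, since the projection $p_0 \colon N\E_\infty\Alg \to \E_\infty\Alg(\Spectra^{C_p})$ composed with $\gamma$ is (by construction of $\gamma$) the functor $\gamma_1$, which is $G$. Step (ii) reduces to invoking Proposition \ref{prop:normedalg_monadic} for $G'$ and the analogous statement (Proposition \ref{prop:forgetC2ringmap_monadic}, referenced in the proof of Theorem \ref{theorem:freenormedalg_formula}) for $G$; both follow from conservativity plus preservation of the relevant split-geometric-realization-shaped colimits, which are computed underlyingly at the level of $\E_\infty$-algebras.

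The substantive step is (iii): we must show that for every $A \in \E_\infty\Alg(\Spectra^{C_p})$, the comparison map $f$ of Construction \ref{cons:calg_map_out_freeparam_calg} is an equivalence of mapping spaces for every test object $(B, n_B) \in N\E_\infty\Alg(\Spectra^{C_p})$. Combining Theorem \ref{theorem:freenormedalg_formula}, which gives the explicit formula
\begin{equation*}
\gamma F(A)^e \simeq A^e, \qquad \gamma F(A)^{\varphi C_p} \simeq A^{\varphi C_p} \otimes A^e_{hC_p}, \qquad s_{F(A)} \simeq s_A \otimes \nu_A,
\end{equation*}
with Observation \ref{obs:morphisms_normedrings} and Corollary \ref{cor:eqvtalg_recollement}, the target $\hom_{N\E_\infty}(\gamma F(A), (B,n_B))$ decomposes as a homotopy limit of mapping spaces of $\E_\infty$-algebras indexed by the diagram $K$ of Notation \ref{ntn:indexing_normedrings}. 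The key simplification is that the geometric-fixed-points component of such a morphism is a factorization through $n_B^{\varphi C_p}$ of a map out of $A^{\varphi C_p} \otimes A^e_{hC_p}$, and tensoring with the free datum $A^e_{hC_p}$ makes the homotopy coherences needed to match the Tate-valued norm of $A$ to that of $B$ (mediated by $n_B$) contractible. After unwinding, each piece of the limit reduces to either $\hom_{\E_\infty\Alg(\Spectra^{BC_p})}(A^e, B^e)$, $\hom(A^{\varphi C_p}, B^{\varphi C_p})$, or identifications of the induced maps to $B^{tC_p}$; reassembling via the recollement description of $\hom_{\E_\infty\Alg(\Spectra^{C_p})}(A,B)$ recovers exactly the source of $f$.

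The main obstacle is the mapping-space computation in step (iii): bookkeeping the many fiber products in the limit diagram defining $\hom_{N\E_\infty}$ and verifying that the Tate-valued norm component, together with the $sh$ shear of Observation \ref{obs:clarifying_actions}, precisely matches the structure on $F(A)^{\varphi C_p}$ imposed by the formula $s_A \otimes \nu_A$. Once this matching is in hand, the conclusion that $\gamma$ is an equivalence is immediate from the Barr--Beck comparison.
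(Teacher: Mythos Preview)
Your proposal is correct and follows essentially the same route as the paper: the commutative triangle over $\E_\infty\Alg(\Spectra^{C_p})$, monadicity of both legs (Propositions \ref{prop:normedalg_monadic} and \ref{prop:forgetC2ringmap_monadic}), and the free-algebra comparison, which the paper packages as Proposition \ref{prop:unit_in_normedalg} and Corollary \ref{cor:free_algebras_agree} before invoking \cite[Proposition 4.7.3.16]{LurHA}. The only substantive difference is in the execution of your step (iii): the paper argues by showing that the fiber of $\eta^*\circ G'$ over a fixed $\E_\infty$-map $A\to B$ is the space of fillings of an explicit cube, contractible via the adjunction $(-)^{\mathrm{triv}} \dashv (-)_{hC_p}$, and then uses the map $f$ of Construction \ref{cons:calg_map_out_freeparam_calg} only to witness $\pi_0$-surjectivity---this is a little sharper than your direct limit-by-limit decomposition, but the content is the same.
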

\begin{proof}
	Consider the diagram of forgetful functors 
	\begin{equation}\label{diagram:forgetforget_is_forget}
	\begin{tikzcd}[column sep=tiny]
		C_p\E_\infty\Alg\left(\Spectra^{C_p}\right) \ar[rr,"\gamma"] \ar[rd,"G"'] & & N\E_\infty\Alg\left(\Spectra^{C_p}\right) \ar[ld,"G'"] \\
		& \E_\infty\Alg\left(\Spectra^{C_p}\right) & 
	\end{tikzcd}
	\end{equation} 
	where $ G' $ is from Definition \ref{defn:normedrings} and $ G $ is (\ref{eq:forget_paramalg_to_ordinary_calg}). 
	The diagram (\ref{diagram:forgetforget_is_forget}) evidently commutes. 

	The functor $ G $ is monadic by Proposition \ref{prop:forgetC2ringmap_monadic}. 
	The functor $ G' $ is monadic by Proposition \ref{prop:normedalg_monadic}.
	Now for any $ A \in \E_\infty\Alg\left(\Spectra^{C_2}\right) $, the unit $ A \to \gamma F(A) $ of Theorem \ref{theorem:freenormedalg_formula} induces an equivalence $ F'(A) \simeq \gamma F(A) $ by Corollary \ref{cor:free_algebras_agree}.  
	The result follows from \cite[Proposition 4.7.3.16]{LurHA}. 
\end{proof}

\begin{prop}\label{prop:unit_in_normedalg}
	Let $ A $ be an $ \E_\infty $-ring in $ \Spectra^{C_p} $ and let $ (B, n_B \colon N^{C_p}B \to B) $ be a normed $ \E_\infty $-ring in $ \Spectra^{C_p} $. 
	Then precomposition with the $ \E_\infty $-map $ \eta_A : A \to GF(A) $ of Theorem \ref{theorem:freenormedalg_formula} induces an equivalence of morphism spaces
	\begin{equation}\label{eq:unit_in_normedalg}
	\begin{tikzcd}
		{\Hom_{N\E_\infty}\left(\left(\gamma F(A), n_{F(A)} \colon N^{C_p}\gamma F(A) \to \gamma F(A)\right),\left(B, n_B \colon N^{C_p} B \to B \right)\right)} \ar[d,"{G'}"] \ar[dd, bend left=75,looseness=2,"{\rotatebox{90}{$\sim$}}"] \\
		\Hom_{\E_\infty}(G'\gamma F(A), G'(B)) \ar[d,"{\eta^*}"] \\
		 \Hom_{\E_\infty}(A, G'(B)).
	\end{tikzcd}
	\end{equation}
	where $ G' $ is the forgetful functor of Definition \ref{defn:normedrings} and $ \gamma $ is the functor of Corollary \ref{cor:operadic_diagrammatic_comparison}. 
	That is, $ \eta_{(-)} $ is a unit for the functors $ (\gamma \circ F, G') $ in the sense of \cite[Definition 5.2.2.7]{LurHTT}. 
\end{prop}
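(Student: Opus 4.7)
The plan is to verify (\ref{eq:unit_in_normedalg}) by unpacking both sides via the limit description of morphism spaces in $N\E_\infty\Alg$ from Observation \ref{obs:morphisms_normedrings} together with the recollement of Corollary \ref{cor:eqvtalg_recollement}, and matching them term by term using the explicit formula for $F(A)$ from Theorem \ref{theorem:freenormedalg_formula}. Since the diagram (\ref{diagram:forgetforget_is_forget}) yields $G' \circ \gamma = G$, it suffices to show that for any $\E_\infty$-map $f \colon A \to G'(B)$ in $\E_\infty\Alg(\Spectra^{C_p})$, the space of normed-algebra morphisms $\gamma F(A) \to (B, n_B)$ extending $f$ is contractible.

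Write $\tilde A := \gamma F(A)$; by Theorem \ref{theorem:freenormedalg_formula}, $\tilde A^e \simeq A^e$ and $\tilde A^{\varphi C_p} \simeq A^{\varphi C_p} \otimes A^e_{hC_p}$ with structure map $s_A \otimes \nu_A$, while $\eta_A$ is the identity on underlying and $\id \otimes u$ on geometric fixed points. By Corollary \ref{cor:eqvtalg_recollement}, an $\E_\infty$-map $g \colon \tilde A \to B$ in $\Spectra^{C_p}$ amounts to a pair $(g^e, g^\varphi)$ together with a homotopy $s_B \circ g^\varphi \simeq (g^e)^{tC_p} \circ (s_A \otimes \nu_A)$. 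The condition $g \circ \eta_A \simeq f$ forces $g^e = f^e$ and forces the restriction of $g^\varphi$ to the $A^{\varphi C_p}$-factor to equal $f^{\varphi C_p}$. Since $\otimes$ is the coproduct in $\E_\infty\Alg(\Spectra)$, the remaining freedom in $g^\varphi$ reduces to a single $\E_\infty$-map $h \colon A^e_{hC_p} \to B^{\varphi C_p}$.

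The Tate compatibility restricted to the $A^e_{hC_p}$-factor forces $s_B \circ h \simeq (f^e)^{tC_p} \circ \nu_A \simeq \nu_B \circ f^e$, the latter equivalence being naturality of the Tate-valued norm (Definition \ref{defn:twisted_tate_frob}). The normed structure on $B$ supplies precisely such a lift: by Remark \ref{rmk:CpEinftyringdata}, the datum $n_B$ includes a factorization $n_B^{\varphi C_p} \colon B^e \to B^{\varphi C_p}$ of $\nu_B$ through $s_B$, and the trivial action on $B^{\varphi C_p}$ allows one to factor $n_B^{\varphi C_p}$ through $B^e_{hC_p}$. Taking $h := \overline{n_B^{\varphi C_p}} \circ (f^e)_{hC_p}$ produces the desired extension, and the compatibility data at the remaining vertices of (\ref{diagram:normedrings}) are then pinned down by the choices already made together with the norm map $n_B$.

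The main obstacle will be showing that the space of such extensions is genuinely contractible rather than merely nonempty. Concretely, one must identify the fiber of the relevant morphism-space projection in the limit of Observation \ref{obs:morphisms_normedrings} with the fiber of $\Hom_{\E_\infty\Alg}(A^e_{hC_p}, B^{\varphi C_p}) \to \Hom_{\E_\infty\Alg}(A^e_{hC_p}, B^{tC_p})$ over $\nu_B \circ f^e$, and verify that the coherence witnessed by $n_B$ (namely the filling of the square (\ref{eq:tatefrobenius_lift}) for $B$) provides exactly the homotopy data required to contract this fiber to a point. The verification hinges on a careful application of Corollary \ref{cor:eqvtalg_recollement} in the relative arrow category, matching the data of $n_B$ vertex by vertex against the limit description of $N\E_\infty\Alg$.
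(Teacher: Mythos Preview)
Your overall strategy matches the paper's: show that the fiber of $\eta^* \circ G'$ over each $f \in \Hom_{\E_\infty}(A,B)$ is contractible by unpacking via the recollement and the limit description of Observation \ref{obs:morphisms_normedrings}, and identify the residual freedom as a map $h \colon A^e_{hC_p} \to B^{\varphi C_p}$ subject to constraints. Your second and third paragraphs are correct and agree with the paper.

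The gap is in your last paragraph. The fiber you write down, namely $\fib\bigl(\Hom_{\E_\infty\Alg}(A^e_{hC_p}, B^{\varphi C_p}) \to \Hom_{\E_\infty\Alg}(A^e_{hC_p}, B^{tC_p})\bigr)$ over $\nu_B \circ f^e$, is not the correct one, and the datum of $n_B$ does not contract it: $n_B$ only supplies a \emph{point}, and when $s_B$ is not a monomorphism that space can be large. What you have recorded is only the Tate-compatibility (``front face'') constraint on $h$, which computes the fiber of $\Hom_{\E_\infty}(GF(A),B) \to \Hom_{\E_\infty}(A,B)$, not of the normed mapping space. You have not imposed the \emph{normed} constraint from vertex $2$ of the diagram (\ref{diagram:normedrings}), namely that $g$ intertwine $n_{F(A)}$ with $n_B$; on geometric fixed points this becomes a ``top face'' condition $h \circ q \simeq n_B^{\varphi C_p} \circ f^e$, where $q \colon A^e \to A^e_{hC_p}$ is the canonical map. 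The paper packages both constraints into a single cube $\mathcal{O}_{C_p} \times (\Delta^1)^{\times 2} \to \E_\infty\Alg(\Spectra)$ with all faces except the top and front already given, and observes that the space of fillings is contractible by the adjunction between $(-)_{hC_p}$ and $(-)^{\mathrm{triv}}$: since $B^{\varphi C_p}$ and $B^{tC_p}$ carry trivial $C_p$-action, the top-face constraint determines $h$ (together with its witnessing $2$-cell) uniquely from the equivariant composite $n_B^{\varphi C_p} \circ f^e \colon A^e \to B^{\varphi C_p}$, after which the front face and the $3$-cell are forced by the remaining faces. This adjunction is the ingredient that closes the argument; without it your proposed fiber is simply too large.
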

\begin{cor}\label{cor:free_algebras_agree}
	The natural transformation $ \eta_{(-)} $ exhibits $ \gamma \circ F $ as a left adjoint to $ G' $. 
\end{cor}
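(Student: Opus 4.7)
The plan is to deduce this as a direct consequence of Proposition \ref{prop:unit_in_normedalg} via the standard $\infty$-categorical characterization of adjunctions by units. Specifically, I would invoke \cite[Proposition 5.2.2.8]{LurHTT} (the counit/unit criterion for adjunctions), which says that a natural transformation $\eta \colon \id \Rightarrow G' \circ (\gamma \circ F)$ exhibits $\gamma \circ F$ as left adjoint to $G'$ if and only if, for every $A \in \E_\infty\Alg(\Spectra^{C_p})$ and every $(B,n_B) \in N\E_\infty\Alg(\Spectra^{C_p})$, precomposition with $\eta_A$ induces an equivalence of mapping spaces
\[
\Hom_{N\E_\infty}\bigl(\gamma F(A),(B,n_B)\bigr) \xrightarrow{\sim} \Hom_{\E_\infty\Alg(\Spectra^{C_p})}(A, G'(B)).
\]

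First, I would observe that Theorem \ref{theorem:freenormedalg_formula}(2) already produces the requisite natural transformation $\eta_A \colon A \to G'\gamma F(A)$, at the level of each $A$, in a natural way; naturality in $A$ follows from the universal property of $F$ (since $F \dashv G$, the unit $\eta$ is natural, and $\gamma$ preserves it). This gives a candidate unit transformation $\eta \colon \id \Rightarrow G' \circ \gamma \circ F$.

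Next, I would verify the hypothesis of \cite[Proposition 5.2.2.8]{LurHTT}: Proposition \ref{prop:unit_in_normedalg} asserts precisely that the composite in display (\ref{eq:unit_in_normedalg}) is an equivalence, for all $A$ and all $(B,n_B)$. This is exactly the universal property required of a unit. Applying the cited HTT result then immediately yields the adjunction $(\gamma \circ F) \dashv G'$, with $\eta_{(-)}$ as its unit.

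The only potential subtlety (and the sole place where one needs to be a little careful) is ensuring that $\eta_A$ really is natural in $A$, as opposed to just pointwise defined. However, since $\eta_A$ is constructed in Theorem \ref{theorem:freenormedalg_formula} as the unit of the operadic adjunction $F \dashv G$ post-composed with the comparison functor $\gamma$, and since $\gamma$ is a functor, naturality is automatic. There is no serious obstacle; the content of the corollary is entirely packaged into Proposition \ref{prop:unit_in_normedalg}, and the corollary itself is essentially a one-line appeal to general adjoint functor theory in $\infty$-categories.
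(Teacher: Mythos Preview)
Your proposal is correct and takes essentially the same approach as the paper: the paper's proof is the one-line ``Follows from \cite[Proposition 5.2.2.8]{LurHTT} and Proposition \ref{prop:unit_in_normedalg}.'' Your only superfluous concern is naturality of $\eta$ in $A$: \cite[Proposition 5.2.2.8]{LurHTT} operates pointwise (via \cite[Definition 5.2.2.7]{LurHTT}, which is exactly what Proposition \ref{prop:unit_in_normedalg} verifies), so no separate naturality argument is needed.
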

\begin{proof}
	Follows from \cite[Proposition 5.2.2.8]{LurHTT} and Proposition \ref{prop:unit_in_normedalg}. 
\end{proof}
\begin{proof} [Proof of Proposition \ref{prop:unit_in_normedalg}] 
	By Observation \ref{obs:morphisms_normedrings}, the space of morphisms\linebreak $ {\Hom_{N\E_\infty}((F(A), n_{F(A)} \colon N^{C_p}F(A) \to F(A)),(B, n_B \colon N^{C_p} B \to B ))} $ is computed by the limit of the diagram 
	\begin{equation}\label{diagram:freeparam_hom_normedalg}
	\begin{tikzcd}[column sep=small]
		& \hom\left( \morphism{A^e}{}{(A^{\otimes p})^{tC_p}}, \morphism{B^e}{}{(B^{\otimes p})^{tC_p}} \right) \ar[dd] & \\
		& & \mathclap{{\hom\left(\morphism{A}{\Delta}{\left(A^{\otimes p}\right)^{tC_p}}, \morphism{B}{\Delta}{\left(B {\otimes p}\right)^{tC_p}} \right)}  \fiberproduct_{\hom\left(\morphism{A}{\Delta}{\left(A^{\otimes p}\right)^{tC_p}}, \morphism{B^{\varphi C_p}}{\Delta}{B^{tC_p}} \right)} {\hom\left(\morphism{A^{\varphi C_p} \otimes A_{hC_p} }{s_A \otimes \nu_A}{A^{tC_p}}, \morphism{B^{\varphi C_p}}{\Delta}{B^{tC_p}} \right)}} \ar[dd] \ar[lu, bend right=30] \\ 
		& \mathclap{\hom\left( (A^{\otimes p})^{tC_p}, (B^{\otimes p})^{tC_p} \right) \times \hom\left( A^{tC_p}, B^{tC_p} \right)} & \\
		\mathclap{\hom_{\E_\infty\Alg\left(\Spectra^{C_p}\right)}(F(A),B)} {} \ar[rr,shorten <=13ex] \ar[ru,shorten=1.5ex] \ar[ruuu,"{m:(-^e)^{\otimes p} \to (-^e)}"',near end, bend left=50] & & {\hom\left(\morphism{A}{\Delta}{\left(A^{\otimes p}\right)^{tC_p}}, \morphism{B}{\Delta}{\left(B^{\otimes p}\right)^{tC_p}} \right)} \times {\hom\left(\morphism{A^{\varphi C_p} \otimes A_{hC_p}}{\Delta}{A^{tC_p}}, \morphism{B^{\varphi C_p}}{\Delta}{B^{tC_p}} \right)}	\ar[lu,shorten >=5ex]	
	\end{tikzcd}.
	\end{equation}
	where all morphisms are computed in either $ \E_\infty\Alg\left(\Spectra^{BC_p}\right) $ or $ \Ar \E_\infty\Alg\left(\Spectra^{BC_p}\right) $. 
	Notice that 
	\begin{equation*}
		\hom_{\E_\infty\Alg\left(\Spectra^{C_p}\right)}(F(A),B) \simeq \hom_{\E_\infty\Alg\left(\Spectra^{C_p}\right)}(A,B) \fiberproduct_{\hom\left(A^{tC_p},B^{tC_p}\right) } \hom \left( \morphism{A_{hC_p} }{\nu_A}{A^{tC_p}}, \morphism{B^{\varphi C_p}}{\Delta}{B^{tC_p}} \right)
	\end{equation*}
	and moreover the composite
	\begin{equation*}
		\hom_{N\E_\infty\Alg}(F(A), B) \xrightarrow{G'} \hom_{\E_\infty\Alg\left(\Spectra^{C_p}\right)}(F(A),B) \xrightarrow{\pi_1} \hom_{\E_\infty\Alg\left(\Spectra^{C_p}\right)}(A,B)
	\end{equation*}
	 is equivalent to $ \eta^* \circ G' $. 
	Unravelling definitions, we see that given a point $ f \in \hom_{\E_\infty\Alg\left(\Spectra^{C_p}\right)}(A,B) $, the fiber of $ \eta^* \circ G' $ over $ f $ is given by the space of fillings of the below diagram to a commutative diagram $ \mathcal{O}_{C_p} \times (\Delta^1)^{\times 2} \to \E_\infty\Alg(\Spectra) $: 	
	\begin{equation*}
	\begin{tikzcd}[column sep=small, row sep=small]
		A^e \ar[rr,"{f^e}"] \ar[dd] \ar[rd] & & B^e \ar[dd] \ar[rd,"{n_B}"] & \\
			& A^e_{hC_p}  \ar[rr, dotted,crossing over] & & B^{\varphi C_p} \ar[dd] \\
		{(A^{\otimes p})^{tC_p}} \ar[rr,"{(f^{\otimes p})^{tC_p}}",near end] \ar[rd] & & {( B^{\otimes p})^{tC_p}} \ar[rd] & \\
		& A^{tC_p} \ar[rr,"{f^{tC_p}}"] & & B^{tC_p} \ar[from=2-2,to=4-2,crossing over,"{\nu_{A^e}}", near end]
	\end{tikzcd}
	\end{equation*} 
	wherein all but the top and front face are given. 
	This space is contractible by the adjunction
	\begin{equation*}
		(-)^{\mathrm{triv}} \colon \Spectra \leftrightarrows \Spectra^{C_p} \colon (-)_{hC_p} . 	
	\end{equation*} 
	Now by Construction \ref{cons:calg_map_out_freeparam_calg}, $ \eta^* \circ G' $ admits a right inverse $ f $, hence it is surjective on connected components. 
	Thus the result follows. 
\end{proof}

\begin{prop}\label{prop:forgetC2ringmap_monadic}
	The forgetful functor $ G: C_p\E_\infty\Alg\left(\Spectra^{C_p}\right) \to \E_\infty\Alg\left(\Spectra^{C_p}\right) $ of (\ref{eq:forget_paramalg_to_ordinary_calg}) is monadic. 
\end{prop}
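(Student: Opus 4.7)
The plan is to verify the three hypotheses of the Barr--Beck--Lurie theorem \cite[Theorem 4.7.3.5]{LurHA}: namely, that $G$ admits a left adjoint, is conservative, and preserves geometric realizations of $G$-split simplicial objects.

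First, $G$ admits a left adjoint by Theorem \ref{thm:operad_forget_is_radjoint} applied to the inclusion of $C_p$-$\infty$-operads $\Com_{\mathcal{O}_{C_p}^\simeq}^\otimes \subseteq \Com_{C_p}^\otimes$ of Example \ref{ex:indexingsystems}, with target the $C_p$-symmetric monoidal $\infty$-category $(\underline{\Spectra}^{C_p})^\otimes$ of Recollection \ref{recollection:Cpcat_norms}.

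For conservativity, write $U \colon \E_\infty\Alg(\Spectra^{C_p}) \to \Spectra^{C_p}$ for the ordinary forgetful functor, which is conservative. The composite $U \circ G \colon C_p\E_\infty\Alg(\Spectra^{C_p}) \to \Spectra^{C_p}$ sends a $C_p$-$\E_\infty$-algebra to its underlying $C_p$-spectrum. A morphism in $C_p\E_\infty\Alg(\Spectra^{C_p})$ is an equivalence iff it induces an equivalence on each fiber of $(\underline{\Spectra}^{C_p})^\otimes \to \underline{\Fin}_{C_p,*}$, and by the parametrized Segal condition \ref{defitem:param_segal_condition} these fibers are determined by the values at orbits, i.e.\ by the underlying $C_p$-spectrum. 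Hence $U \circ G$ is conservative, and so is $G$.

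For the third hypothesis, we will show $G$ preserves all sifted colimits. Both $U$ and $U \circ G$ preserve sifted colimits: for $U$ this is \cite[Proposition 3.2.3.1]{LurHA}, and for $U \circ G$ this is the parametrized analog established in \cite{NS22}. Given a sifted diagram $\alpha \colon K \to C_p\E_\infty\Alg(\Spectra^{C_p})$ with colimit $X$, let $Y := \colim G \alpha$ in $\E_\infty\Alg(\Spectra^{C_p})$. The canonical comparison $Y \to G(X)$ induces an equivalence on underlying $C_p$-spectra, since $U(Y) \simeq \colim U G \alpha \simeq U G(X)$, so it is an equivalence by conservativity of $U$. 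Thus $G$ preserves sifted colimits, in particular $G$-split geometric realizations, and Barr--Beck--Lurie yields the claim.

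The main obstacle is not a conceptual one but rather verifying the cited input that sifted colimits of $C_p$-$\E_\infty$-algebras are computed on underlying $C_p$-spectra, which requires pinning down the appropriate result in \cite{NS22} (the parametrized analog of the fact that forgetful functors from operadic algebras preserve sifted colimits); once this is in hand, each remaining step reduces to a standard conservativity or adjunction argument.
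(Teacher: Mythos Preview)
Your proof is correct and follows essentially the same approach as the paper: both arguments use the commuting triangle of forgetful functors
\[
\begin{tikzcd}[column sep=tiny]
C_p\E_\infty\Alg(\Spectra^{C_p}) \ar[rr,"G"] \ar[rd] & & \E_\infty\Alg(\Spectra^{C_p}) \ar[ld] \\
& \Spectra^{C_p} &
\end{tikzcd}
\]
together with the existence of a left adjoint to $G$ from \cite[Theorem 4.3.4]{NS22}. The paper is slightly more efficient in two places: it cites \cite[Corollary 5.1.5]{NS22} for the monadicity of both diagonal arrows (which is exactly the ``parametrized analog'' you left unspecified), and it then invokes \cite[Proposition 4.7.3.22]{LurHA} directly rather than unpacking the Barr--Beck--Lurie hypotheses by hand. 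Your explicit verification of conservativity and sifted-colimit preservation is simply a spelling-out of what that proposition does internally.
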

\begin{proof}
	We consider the commuting triangle of forgetful functors
	\begin{equation*}
	\begin{tikzcd}[column sep=tiny]
		C_p\E_\infty\Alg\left(\Spectra^{C_p}\right) \ar[rr,"G"] \ar[rd] & & \E_\infty\Alg\left(\Spectra^{C_p}\right) \ar[ld] \\
		& \Spectra^{C_p} & 
	\end{tikzcd}. 
	\end{equation*} 
	The upper horizontal arrow is given by restricting along the $ C_p $ operadic inclusion $ \Com_{\mathcal{T}^\simeq}^\otimes \inj \Com_{\mathcal{T}}^\otimes $ and applying the equivalence of Lemma \ref{lemma:Einftyalg_2ways}. 
	By \cite[Corollary 5.1.5]{NS22}, the diagonal arrows are monadic; in particular by the Barr--Beck--Lurie theorem the right diagonal arrow is conservative. 
	By \cite[Theorem 4.3.4]{NS22} applied to the morphism $ \Com_{\mathcal{T}^\simeq}^\otimes \inj \Com_{\mathcal{T}}^\otimes $, the upper horizontal arrow admits a left adjoint. 
	Thus the result follows from \cite[Proposition 4.7.3.22.]{LurHA}. 
\end{proof}
\begin{lemma}
	\label{lemma:Einftyalg_2ways}
	For the minimal indexing system $ \Com_{\mathcal{T}^\simeq} $ (Example \ref{ex:indexingsystems}), we have a canonical identification of $ \Com_{\mathcal{T}^\simeq} $-algebras in $ \left(\underline{\Spectra}^{C_p}\right)^\otimes $ with $ \mathcal{O}_{C_p}^\op $-families of $ \E_\infty $-algebras in spectra, or equivalently $ \E_\infty $-algebras in $ \Spectra^{C_p} $. 
\end{lemma}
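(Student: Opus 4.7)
The plan is to establish the claim via two successive equivalences. First, I will identify $\Com_{\mathcal{T}^\simeq}$-algebras in $(\underline{\Spectra}^{C_p})^\otimes$ with cocartesian sections over $\mathcal{T}^\op$ of the fibration whose fiber over $V$ is $\E_\infty\Alg((\underline{\Spectra}^{C_p})_V)$. Second, I will identify those cocartesian sections with $\E_\infty$-algebras in the global sections $\Spectra^{C_p}$. The conceptual content of the first step is that the minimal indexing system $\Com_{\mathcal{T}^\simeq}^\otimes$ encodes only \emph{fiberwise} $\E_\infty$-structure; no genuine equivariant norm data enters beyond that forced by the cocartesian structure over $\mathcal{T}^\op$.

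For the first equivalence, I would unwind Example \ref{ex:indexingsystems}: a morphism of $\Com_{\mathcal{T}^\simeq}^\otimes$ is a span whose ``active'' leg $m$ is a coproduct of (possibly empty) fold maps. Restricting to the fiber over a single $V \in \mathcal{T}^\op$ cuts out precisely Segal's $\infty$-operad $\Fin_*$ (indexed by the orbits of $V$). Consequently, a $\mathcal{T}$-functor $\Com_{\mathcal{T}^\simeq}^\otimes \to (\underline{\Spectra}^{C_p})^\otimes$ preserving inert edges restricts fiberwise to an ordinary $\E_\infty$-algebra in $(\underline{\Spectra}^{C_p})_V$, and the lifting condition over inert arrows of $\mathcal{T}^\op$ promotes this fiberwise data to a cocartesian section of $V \mapsto \E_\infty\Alg((\underline{\Spectra}^{C_p})_V)$.

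For the second equivalence, I will appeal to the fact that $\Spectra^{C_p}$ identifies with the cocartesian sections of $\underline{\Spectra}^{C_p} \to \mathcal{T}^\op$ (cf.\ Remark \ref{rmk:param_unstraighten} and Example \ref{ex:param_Gspectra}), together with the general principle that $\E_\infty\Alg(-)$ commutes with limits of symmetric monoidal $\infty$-categories along symmetric monoidal functors. Concretely, both sides of the desired comparison can be presented as $\lim_{V \in \mathcal{T}^\op} \E_\infty\Alg((\underline{\Spectra}^{C_p})_V)$, the limit being taken along the symmetric monoidal restriction functors encoded by $\zeta$ of Recollection \ref{recollection:Cpcat_norms}.

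The main obstacle will be the bookkeeping in the first step: one must check carefully that the fiber of $\Com_{\mathcal{T}^\simeq}^\otimes$ over each $V \in \mathcal{T}^\op$ genuinely recovers Segal's $\infty$-operad on the nose, and that the parametrized Segal condition of Definition \ref{defn:param_operad}(\ref{defitem:param_segal_condition}) restricts fiberwise to the classical Segal condition. Once this identification is pinned down, the remaining steps reduce to standard facts about limits of $\E_\infty$-algebras along symmetric monoidal functors between presentably symmetric monoidal $\infty$-categories.
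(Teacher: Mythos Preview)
Your proposal is correct and conceptually aligned with the paper's proof: both rest on the observation that $\Com_{\mathcal{T}^\simeq}^\otimes$ encodes only fiberwise $\E_\infty$-structure over $\mathcal{T}^\op$, with no genuine norm data. The paper's argument is terser and organized slightly differently: it characterizes a $\Com_{\mathcal{T}^\simeq}$-algebra as a section that is $\mathcal{T}$-right Kan extended from the full subcategory on coproducts of $[C_p/C_p = C_p/C_p]$, then invokes \cite[Proposition 4.3.2.15]{LurHA} (together with \cite[Corollary 2.4.15]{NS22} and \cite[Example 2.1.3.5]{LurHA}) to pass to an $\mathcal{O}_{C_p}^\op$-family of ordinary $\E_\infty$-algebras. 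Your route---explicitly identifying each fiber of $\Com_{\mathcal{T}^\simeq}^\otimes$ with $\Fin_*$ and then using that $\E_\infty\Alg(-)$ commutes with limits along symmetric monoidal functors---is more hands-on and arguably more transparent, trading the black-box citation of Lurie's family-of-operads machinery for a direct limit argument; the paper's version is shorter but leans more heavily on the reader knowing what those cited results say.
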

\begin{proof}
	Compare \cite[Corollary 2.4.15]{NS22} and \cite[Example 2.1.3.5]{LurHA}. 

	Let $ A :  \Com_{\mathcal{T}^\simeq}^\otimes \to \cat^\otimes $ be a section of $ p: \cat^\otimes \to \underline{\Fin}_{\mathcal{T},*} $. 
	Notice that $ A $ is a $ \Com_{\mathcal{T}^\simeq} $-algebra if and only if $ A $ is $ \mathcal{T} $-right Kan extended from the full subcategory of $\Com_{\mathcal{T}^\simeq}^\otimes $ spanned by coproducts of $ [C_p/C_p = C_p/C_p] $. 
	The result follows from \cite[Proposition 4.3.2.15]{LurHA}. 
\end{proof}

\section{Applications \& examples}\label{section:app_ex}
\subsection{Examples}
The example which will be used in the author's upcoming work on real ($ C_2 $-equivariant) trace theories is 
\begin{theorem}\label{thm:constMackey_is_normed}
	Let $ k $ be a discrete commutative ring. 
	The constant $ C_p $-Mackey functor $ \underline{k} $ on $ k $ uniquely acquires the structure of a $ C_p $-$ \E_\infty $-ring. 
\end{theorem}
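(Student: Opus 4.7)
The plan is to apply Theorem~\ref{thm:mainthm} to reduce the claim to producing an essentially unique normed $\E_\infty$-ring structure on $\underline{k}$, then exploit the simplification of Remark~\ref{rmk:CpEinftyringdata} afforded by the fact that $\underline{k}^e = Hk$ carries the trivial $C_p$-action. First, $\underline{k}$ acquires a canonical underlying $\E_\infty$-ring structure in $\Spectra^{C_p}$: the functor $\CRing \to \E_\infty\Alg(\Spectra^{C_p})$ sending a commutative ring to its associated constant Mackey functor is (lax) symmetric monoidal, so $\underline{k}$ is the image of $k$.

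By the ``trivial action'' form of Remark~\ref{rmk:CpEinftyringdata}, specifying a normed refinement of this $\E_\infty$-structure amounts to producing an $\E_\infty$-ring lift
$$
\begin{tikzcd}
& \underline{k}^{\varphi C_p} \ar[d, "{s_{\underline{k}}}"] \\
k \ar[ur, dashed, "{n_k}"] \ar[r, "{\nu_k}"'] & k^{tC_p}
\end{tikzcd}
$$
of the Tate-valued Frobenius together with a $2$-cell witnessing the triangle. Equivalently, one must show that the fiber of $\Map_{\E_\infty\Alg}(k, \underline{k}^{\varphi C_p}) \to \Map_{\E_\infty\Alg}(k, k^{tC_p})$ over the point $\nu_k$ is contractible. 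The central geometric input is the claim that $s_{\underline{k}}$ realizes $\underline{k}^{\varphi C_p}$ as the connective cover $\tau_{\geq 0}(k^{tC_p})$ of $k^{tC_p}$ as an $\E_\infty$-ring. Granted this, contractibility of the lifting space follows immediately from the universal property of connective covers, since $k$ is connective.

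To prove the geometric claim, I would unpack the definition $\underline{k}^{\varphi C_p} = \cofib((Hk)_{hC_p} \xrightarrow{\tr} \underline{k}^{C_p})$ from Proposition~\ref{prop:eqvtspectrarecollement} using the Mackey functor data defining $\underline{k}$ (restriction is the identity and transfer is multiplication by $p$), compare with the Tate fiber sequence $(Hk)_{hC_p} \to (Hk)^{hC_p} \to (Hk)^{tC_p}$, and verify that the genuine-to-Borel comparison $\underline{k}^{C_p} \to (Hk)^{hC_p}$ is an equivalence in negative degrees. The main obstacle is executing this comparison while tracking the $\E_\infty$-ring structure on $\underline{k}^{\varphi C_p}$; for $k = \mathbb{F}_p$ or $k = \mathbb{Z}$ the underlying homotopy-group computation is classical, and the general case reduces to these via base change along the unit $\mathbb{Z} \to k$ together with the symmetric monoidality of the constant-Mackey-functor construction.
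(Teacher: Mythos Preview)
Your proposal is correct and follows essentially the same route as the paper: reduce via Theorem~\ref{thm:mainthm} and Remark~\ref{rmk:CpEinftyringdata} to the lifting problem along $s_{\underline{k}}$, identify $\underline{k}^{\varphi C_p}$ with $\tau_{\geq 0}(k^{tC_p})$, and conclude by the universal property of connective covers of $\E_\infty$-rings. The only difference is that the paper verifies $\underline{k}^{\varphi C_p}\simeq \tau_{\geq 0}k^{tC_p}$ more directly, by observing in the isotropy separation square that $\underline{k}^{C_p}=k\to k^{hC_p}$ is already a connective cover (since $k$ is discrete and $\pi_0 k^{hC_p}=k$), hence so is the induced map on cofibers; your proposed case-reduction to $k=\Z$ or $\F_p$ via base change is unnecessary.
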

\begin{proof}
In view of Theorem \ref{thm:mainthm}, it suffices to show that $ \underline{k} $ can be lifted to an object of Definition \ref{defn:normedrings}. 
Note that the isotropy separation sequence for $ \underline{k} $ is
\begin{equation*}
\begin{tikzcd}
	k^{C_p} \ar[d,"\tau_{\geq 0}"'] \ar[r] & \underline{k}^{\varphi C_p} \ar[d] \\
	k^{hC_p} \ar[r] & k^{tC_p} 
\end{tikzcd}.
\end{equation*}
The left vertical arrow is a connective cover; hence so is the right vertical arrow and $ \underline{k}^{\varphi C_p} = \tau_{\geq 0} k^{tC_p} $. 
Note that $ \tau_{\geq 0} k^{tC_p} $ is an $ \E_\infty $-ring in spectra because the Tate construction and connective cover are lax symmetric monoidal functors. 
By Theorem \ref{thm:mainthm} and Remark \ref{rmk:CpEinftyringdata}, it suffices to exhibit a commutative diagram
\begin{equation*}
\begin{tikzcd}
	k \ar[d,"\text{Tate diagonal}"'] \ar[r, dotted,"{n_{\underline{k}}}"] & \underline{k}^{\varphi C_p} \ar[d,"\alpha"] \\
	\left(k^{\otimes p}\right)^{tC_p} \ar[r,"{m^{tC_p}}"] & k^{tC_p} 
\end{tikzcd}.
\end{equation*} 
The dotted arrow and 2-cell making the diagram commute exist up to contractible choice because the inclusion of connective $ \E_\infty $-algebra spectra into all $ \E_\infty $-algebra spectra admits a right adjoint \cite[Proposition 7.1.3.13]{LurHA}, and our assumption that $ k $ is connective. 
\end{proof}

There is a natural class of equivariant $ C_p $-spectra for which the data of (\ref{eq:tatefrobenius_lift}) is no extra data at all. 
\begin{recollection}
	The $ \infty $-category $ \Spectra^{C_p, \Borel} $ of \emph{Borel} $ C_p $-spectra is the image of $ \Spectra^{BC_p} $ under the fully faithful right adjoint to the `underlying' spectrum functor of Proposition \ref{prop:eqvtspectrarecollement}. 
	Write $ C_p\E_\infty\Alg^{\Borel}\left(\Spectra^{C_p} \right) $ for the pullback
	\begin{equation*}
		C_p\E_\infty\Alg\left(\Spectra^{C_p} \right) \fiberproduct_{\Spectra^{C_p}} \Spectra^{C_p, \Borel} .
	\end{equation*}
	In words, this is the category of $ C_p $-$ \E_\infty $-algebras whose underlying $ C_p $-spectrum is Borel. 
	A $ C_p $-spectrum is Borel if and only if the structure map $ A^{\varphi C_p} \to A^{tC_p} $ is an equivalence. 
\end{recollection}
In view of the expected correspondence between the theories of $ N_\infty $-algebras of Blumberg--Hill and the $ C_p $-$ \E_\infty $-algebras of Nardin--Shah, we have the following analogue of \cite[Theorem 6.26]{MR3406512}. 
\begin{prop}\label{prop:Borel_are_normed} 
	Every Borel $ \E_\infty$-algebra in $ C_p $-spectra admits an essentially unique structure of a $ C_p\E_\infty $-algebra. 
	More precisely, there is an equivalence of categories
	\begin{equation*}
		\E_\infty\Alg^{\Borel}\left(\Spectra^{C_p} \right) \xrightarrow{\exists} C_p\E_\infty\Alg^{\Borel}\left(\Spectra^{C_p} \right)
	\end{equation*}
	with inverse the forgetful functor $ G $ (\ref{eq:forget_paramalg_to_ordinary_calg}). 
\end{prop}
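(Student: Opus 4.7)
The plan is to reduce to normed $\E_\infty$-algebras via Theorem \ref{thm:mainthm}, and then exploit the defining feature of Borel spectra---that the structure map $\alpha\colon A^{\varphi C_p} \to A^{tC_p}$ is an equivalence---to show that the additional data required to promote an $\E_\infty$-algebra in $\Spectra^{C_p}$ to a normed algebra collapses to contractible choice.

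First, by Theorem \ref{thm:mainthm} the comparison functor $\gamma$ is an equivalence, and by construction $G \simeq G' \circ \gamma$, where $G'$ is the forgetful functor of Definition \ref{defn:normedrings}. Define $N\E_\infty\Alg^{\Borel}(\Spectra^{C_p})$ by pullback against $\Spectra^{C_p,\Borel} \subseteq \Spectra^{C_p}$. The restriction of $\gamma$ is then an equivalence on Borel subcategories, so it suffices to show that
\[
G'^{\Borel}\colon N\E_\infty\Alg^{\Borel}(\Spectra^{C_p}) \to \E_\infty\Alg^{\Borel}(\Spectra^{C_p})
\]
is an equivalence of $\infty$-categories.

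Second, I compute the fiber of $G'$ over an object $A$. By Remark \ref{rmk:CpEinftyringdata}, promoting $A$ to a normed algebra is the same datum as a factorization of the twisted Tate-valued norm $\nu_A\colon A \to A^{tC_p}$ through the structure map $\alpha\colon A^{\varphi C_p} \to A^{tC_p}$, together with a witnessing 2-cell with its appropriate equivariance. That is, the fiber is computed as the fiber over $\nu_A$ of post-composition $\alpha_*\colon \Map_{\E_\infty\Alg}(A, A^{\varphi C_p}) \to \Map_{\E_\infty\Alg}(A, A^{tC_p})$ (with the $BC_p$-equivariance built in). When $A$ is Borel, $\alpha$ is an equivalence, so this fiber is contractible; essential surjectivity of $G'^{\Borel}$ then follows by taking the canonical lift $n_A\colon A \xrightarrow{\nu_A} A^{tC_p} \xleftarrow{\sim} A^{\varphi C_p}$ with its tautological 2-cell.

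For full faithfulness I invoke Observation \ref{obs:morphisms_normedrings}, which presents the morphism spaces in $N\E_\infty\Alg$ as a pullback whose additional factors parametrize compatibility with the structure maps $\alpha_A$ and $\alpha_B$. Restricting to Borel source and target makes both structure maps equivalences, so the pullback collapses and the forgetful functor induces an equivalence on mapping spaces. The main obstacle is the second step: carefully unwinding the equivariance data encoded in (\ref{eq:tatefrobenius_lift}) and its 2-cell, and confirming that once $\alpha$ is invertible this equivariance places no further obstruction---beyond the existence of the factorization of $\nu_A$ through $\alpha$---on the promotion to a normed algebra structure.
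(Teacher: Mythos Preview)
Your proposal is correct and follows essentially the same approach as the paper: reduce via Theorem \ref{thm:mainthm} to normed algebras and observe that for Borel $A$ the structure map $\alpha\colon A^{\varphi C_p}\to A^{tC_p}$ is an equivalence, so the space of lifts in (\ref{eq:tatefrobenius_lift}) is contractible. Your treatment is in fact more careful than the paper's, which compresses the essential surjectivity and full faithfulness into the single phrase ``functorial in $A$''; your concern about the equivariance datum is legitimate but, as you suspect, it imposes no further obstruction once $\alpha$ is invertible.
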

This result may also be regarded as a special case of \cite[Proposition 3.3.6]{HilmanThesis}. 
\begin{proof}
	Let $ A \in \E_\infty\Alg^{\Borel}\left(\Spectra^{C_p} \right) $. 
	Then by Theorem \ref{thm:mainthm}, it suffices to produce a lift
	\begin{equation*}
	\begin{tikzcd}
		A^e \ar[d,"\Delta"] \ar[r,dotted,"{\exists ?}"] & A^{\varphi C_p} \ar[d,"{s_A}"] \\
		\left( A^{\otimes p}\right)^{tC_p} \ar[r,"{m^{tC_p}}"] & A^{tC_p}
	\end{tikzcd}
	\end{equation*}
	which is functorial in $ A $. 
	By definition of Borel spectra, $ s_A $ is an equivalence, so the space of choices of $ \dasharrow $ and a 2-cell making the diagram commute is contractible. 
\end{proof}
\begin{cor}\label{cor:real_bordism_normed}
	The real bordism spectrum $ MU_{\R} $ admits a unique refinement to $ C_2 $-$ \E_\infty $-algebra.
\end{cor}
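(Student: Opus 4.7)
The plan is to deduce Corollary \ref{cor:real_bordism_normed} as an immediate application of Proposition \ref{prop:Borel_are_normed}. To set this up, I would first recall that $MU$ carries a natural $C_2$-action by complex conjugation, and this action is compatible with the $\E_\infty$-ring structure (since complex conjugation is a ring automorphism of the universal complex-oriented theory). Thus $MU$ with its conjugation action defines an object of $\E_\infty\Alg(\Spectra^{BC_2})$, and under the fully faithful Borel inclusion $\Spectra^{BC_2} \hookrightarrow \Spectra^{C_2}$ (right adjoint to $(-)^e$, cf.\ Proposition \ref{prop:eqvtspectrarecollement}) this produces the $C_2$-spectrum $MU_{\R}$ together with its $\E_\infty$-algebra structure in $\Spectra^{C_2}$.

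Next I would verify that this $MU_{\R}$ lies in the Borel subcategory $\Spectra^{C_2,\Borel}$, i.e.\ that the structure map $MU_{\R}^{\varphi C_2} \to MU_{\R}^{tC_2}$ is an equivalence. Since $MU_{\R}$ is defined here as the Borel refinement of the conjugation-equivariant $MU$, this is essentially tautological: the essential image of the Borel inclusion is cut out exactly by this condition. (Alternatively, one can invoke the resolution of the homotopy limit problem for real cobordism.) Consequently $MU_{\R}$ defines an object of $\E_\infty\Alg^{\Borel}(\Spectra^{C_2})$.

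Having established that $MU_{\R}$ is a Borel $\E_\infty$-algebra in $\Spectra^{C_2}$, I would invoke Proposition \ref{prop:Borel_are_normed} in the case $p=2$: the forgetful functor
\[
G \colon C_2\E_\infty\Alg^{\Borel}(\Spectra^{C_2}) \xrightarrow{\sim} \E_\infty\Alg^{\Borel}(\Spectra^{C_2})
\]
is an equivalence, so $MU_{\R}$ admits a canonical, essentially unique preimage in $C_2\E_\infty\Alg(\Spectra^{C_2})$. This preimage is the desired $C_2$-$\E_\infty$-refinement, and its uniqueness is built into the statement of Proposition \ref{prop:Borel_are_normed}.

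The only substantive point is to ensure that $MU_{\R}$ as treated in this paper agrees with the Borel refinement of conjugation-equivariant $MU$; this is a convention/definition check rather than a computation. If instead one adopts the genuine real bordism spectrum in the sense of Landweber--Araki--Hu--Kriz, one would need to appeal separately to the statement that its geometric fixed points agree with the Tate construction (i.e.\ the Borel condition), and then the remainder of the argument is identical.
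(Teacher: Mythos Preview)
Your proposal is correct and follows the same route as the paper: reduce to Proposition~\ref{prop:Borel_are_normed} by verifying that $MU_{\R}$ is Borel. The paper takes the genuine real bordism spectrum (your second branch) and cites Hu--Kriz, Theorem~4.1(1), for the Borel property, so your ``alternatively'' remark is in fact the operative case; otherwise the arguments coincide.
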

\begin{proof}
	Follows from \cite[Theorem 4.1(1)]{MR1808224} and Proposition \ref{prop:Borel_are_normed}.
\end{proof}
\begin{prop}\label{prop:norms_are_normed}
	Let $ B \in \E_\infty\Alg \left(\Spectra\right) $ be an $ \E_\infty $-algebra. 
	Then $ N^{C_p}B $ admits a canonical structure of a $ C_p $-$ \E_\infty $-algebra. 
	That is, there is a factorization
	\begin{equation*}
	\begin{tikzcd}[cramped]
		&  N\E_\infty\Alg\left(\Spectra^{C_p}\right) \ar[d,"{G'}"] \\
		\E_\infty\Alg \left(\Spectra\right) \ar[ru,dotted,"\exists",bend left=30] \ar[r,"{N^{C_p}}"] & \E_\infty \Alg\left(\Spectra^{C_p}\right)
	\end{tikzcd}.
	\end{equation*}
\end{prop}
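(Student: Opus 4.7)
The plan is to invoke Theorem~\ref{thm:mainthm} and exhibit a lift of $N^{C_p}B$ to a normed $\E_\infty$-algebra functorially in $B \in \E_\infty\Alg(\Spectra)$. By Recollection~\ref{rec:HHRnorm} and Proposition~\ref{prop:eqvtspectrarecollement}, for $A := N^{C_p}B$ the underlying object decomposes as $A^e \simeq B^{\otimes p}$ with the cyclic permutation action, $A^{\varphi C_p} \simeq B$ (with trivial action), and structure map $s_A \simeq \Delta_B$ given by the Tate diagonal of $B$.

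By Remark~\ref{rmk:CpEinftyringdata}, it suffices to fill in the square (\ref{eq:tatefrobenius_lift}) for this $A$, which becomes
\[
\begin{tikzcd}
B^{\otimes p} \ar[d,"\Delta_{B^{\otimes p}}"'] \ar[r,dotted] & B \ar[d,"\Delta_B"] \\
((B^{\otimes p})^{\otimes^\tau p})^{tC_p} \ar[r,"{m^{tC_p} \circ sh}"'] & (B^{\otimes p})^{tC_p}
\end{tikzcd}
\]
in $\E_\infty\Alg(\Spectra)$, subject to the equivariance clause of Remark~\ref{rmk:CpEinftyringdata}. I would take the dotted arrow to be the $p$-fold multiplication $\mu \colon B^{\otimes p} \to B$ of $B$, which is $C_p$-invariant for the cyclic permutation action by commutativity of $B$. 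Naturality of the Tate diagonal applied to the $\E_\infty$-ring map $\mu$ supplies a canonical homotopy $\Delta_B \circ \mu \simeq (\mu^{\otimes p})^{tC_p} \circ \Delta_{B^{\otimes p}}$, so it remains to identify $(\mu^{\otimes p})^{tC_p}$ with $m^{tC_p} \circ sh$. Both are $C_p$-equivariant $\E_\infty$-ring maps from $((B^{\otimes p})^{\otimes^\tau p})^{tC_p}$ to $(B^{\otimes p})^{tC_p}$; they differ by a $C_p$-equivariant permutation of the $p^2$ tensor factors in the source which, because it commutes with the cyclic $C_p$-action on the outer factors, is canonically homotopic to the identity in $\E_\infty\Alg(\Spectra^{BC_p})$ by the $\E_\infty$-structure.

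Each ingredient above---the Tate diagonal, $\mu$, the shear, and the Hill--Hopkins--Ravenel norm---is natural in $\E_\infty$-algebra maps of $B$, so the assignment $B \mapsto (N^{C_p}B, \mu)$ assembles into a functor $\E_\infty\Alg(\Spectra) \to N\E_\infty\Alg(\Spectra^{C_p})$ lifting $N^{C_p}$; composing with (the inverse of) the equivalence $\gamma$ from Theorem~\ref{thm:mainthm} yields the desired factorization. The main obstacle will be making the equivariant shuffle identification $\mu^{\otimes p} \simeq m \circ sh$ precise at the level of $\E_\infty\Alg(\Spectra^{BC_p})$, rather than merely on underlying spectra, and verifying the equivariance clause of Remark~\ref{rmk:CpEinftyringdata}; both reduce to careful bookkeeping with the diagonal versus transposition actions of Notation~\ref{ntn:clarifying_actions} and the $C_p$-equivariance of the shear asserted in Observation~\ref{obs:clarifying_actions}(1).
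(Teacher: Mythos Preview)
Your proposal is correct and follows essentially the same approach as the paper: reduce via Theorem~\ref{thm:mainthm} to filling the square~(\ref{eq:tatefrobenius_lift}) for $A = N^{C_p}B$, take the dotted arrow to be the $p$-fold multiplication $m_B$, and use naturality of the Tate diagonal to supply the commuting homotopy. The paper's proof is terser and simply writes the bottom arrow as $m^{tC_p}$ without isolating the shear identification or the equivariance clause you discuss; your more careful unpacking of $(\mu^{\otimes p})^{tC_p} \simeq m^{tC_p}\circ sh$ and of the $C_p$-equivariance is extra bookkeeping rather than a different idea.
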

\begin{proof}
	Then by Theorem \ref{thm:mainthm}, it suffices to produce a dotted arrow and a commutative diagram $ \mathcal{O}_{C_p} \times \Delta^1 \to \E_\infty\Alg\left(\Spectra\right) $
	\begin{equation*}
	\begin{tikzcd}
		B^{\otimes p} \ar[d,"\Delta"'] \ar[r,dotted,"{\exists ?}"] & B \ar[d,"{\Delta}"] \\
		\left( B^{\otimes p^2} \right)^{tC_p} \ar[r,"{m^{tC_2}}"] & (B^{\otimes p})^{tC_p}
	\end{tikzcd}
	\end{equation*}
	which is functorial in $ B $. 
	We can choose the dotted arrow to be $ m_B $ and a commutative diagram to be functorial in $ B $ because the Tate diagonal is functorial. 
\end{proof}

\subsection{Real motivic invariants}
Here, we briefly recall that algebras with $ C_2 $-actions naturally give rise to motivic invariants valued in genuine $ C_2 $-spectra. 
These real motivic invariants and their associated real trace theories provided the impetus for this work. 

We only provide brief sketches of the required constructions and definitions; readers who are unfamiliar with the following notions should refer to sources cited below for details. 
\begin{recollection}
	[Real topological K-theory]\label{rec:real_top_Ktheory} \cites{MR206940,MR2240234} 
	The space $ \Z \times BU $ has a $ C_2 $-action coming from complex conjugation on the unitary group $ U $, with $ C_2 $-fixed points $ \Z \times BO $. 
	Furthermore, there is a $ C_2 $-equivariant form of Bott periodicity $ \Z \times BU \simeq \Omega^{\rho}(\Z \times BU) $.
	Real K-theory $ KU_{\R} $ is the associated $ C_2 $-spectrum (under the equivalence in \cite[\S3]{GuillouMay}).

	By Appendix A (see discussion after Theorem A.5) of \cite{MR3214285}, $ KU_{\R} $ is an $ \E_\infty $ algebra in $ C_2 $-spectra. 
\end{recollection} 
\begin{recollection} [Poincaré $ \infty $-categories]
	There is an $ \infty $-category $ \Cat^p_{\infty} $ of \emph{Poincaré $ \infty $-categories} (\cite[Definitions 1.2.7-8]{CDHHLMNNSI}) whose objects are pairs $ (\cat, \Qoppa) $ consisting of a small stable $ \infty $-category and a quadratic functor $ \Qoppa: \cat^\op \to \Spectra $, and morphisms are given by duality-preserving exact functors. 
	Moreover, the $ \infty $-category $ \Cat^p_{\infty} $ has a symmetric monoidal structure \cite[Theorem 5.2.7(iii)]{CDHHLMNNSI} lifting the Lurie tensor product on small stable $ \infty $-categories. 
\end{recollection}
\begin{defn}[Real algebraic K-theory]\label{defn:real_alg_Ktheory} 
	Let $ A $ be a $ C_2 $-$ \E_\infty $-algebra. 
	We may associate to $ A $ the module with genuine involution $ (M= A^e, N = A^{\varphi C_2}, s_A : A^{\varphi C_2} \to A^{tC_2}) $ (Definition 3.2.2 of \emph{loc. cit}.
	To such a module with genuine involution there is an associated Poincaré $ \infty $-category $ \left(\Perf_{A^e},\Qoppa^{s_A}_{A^e} \right) $ (\cite[Construction 3.2.5]{CDHHLMNNSI}). 
	The \emph{real algebraic K-theory $ K_\R(A) $} of $ A $ is the real algebraic K-theory 
	\begin{equation*}
		K_\R(A) \simeq \mathrm{GW}^\mathrm{ghyp} \left(\Perf_{A^e},\Qoppa^{s_A}_{A^e} \right) \in \Spectra^{C_2} 	
	\end{equation*} 
	in the sense of \cite[Definition 4.5.1]{CDHHLMNNSII}. 
\end{defn}
\begin{prop}
\begin{enumerate}[label=(\arabic*)]
	\item \label{propitem:poincare_modules} The assignment $ A \mapsto \left(\Perf_{A^e},\Qoppa^{s_A}_{A^e} \right) $ of Definition \ref{defn:real_alg_Ktheory} (compare examples from \S3.2, in particular Example 3.2.11 of \cite{CDHHLMNNSI}) promotes to a symmetric monoidal functor
	\begin{equation*}
	\begin{split}
		C_2\E_\infty\Alg\left(\Spectra^{C_2}\right) \to \Cat^p_{\infty}
	\end{split}. 
	\end{equation*}

	\item \label{propitem:realKtheory_Einftyalg} The real algebraic K-theory of a $ C_2 $-$ \E_\infty $-algebra $ R $ canonically refines to an $ \E_\infty $-algebra in $ C_2 $-spectra. 
\end{enumerate}
\end{prop}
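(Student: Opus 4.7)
The plan is to factor the desired functor through the symmetric monoidal category of commutative modules with genuine involution (MGIs), exploiting Theorem \ref{thm:mainthm} to make the bridge concrete.

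For part (1), I would first apply Theorem \ref{thm:mainthm} to identify $ C_2\E_\infty\Alg(\Spectra^{C_2}) $ with the category $ N\E_\infty\Alg(\Spectra^{C_2}) $ of normed algebras. Unpacking Definition \ref{defn:normedrings}, the data of a normed algebra---an $ \E_\infty $-ring $ A^e \in \E_\infty\Alg(\Spectra^{BC_2}) $, an $ \E_\infty $-ring $ A^{\varphi C_2} \in \E_\infty\Alg(\Spectra) $, and a structure map $ s_A\colon A^{\varphi C_2} \to A^{tC_2} $ lifting the Tate-valued norm $ \nu_{A^e} $---is exactly the data of a commutative MGI in the sense of \cite[Example 3.2.11]{CDHHLMNNSI}. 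Composing the resulting symmetric monoidal equivalence with the Poincaré category construction $ (M, N, \alpha) \mapsto (\Perf_M, \Qoppa^{\alpha}_M) $ of \cite[Construction 3.2.5]{CDHHLMNNSI}, which is symmetric monoidal when restricted to commutative MGIs, yields the desired functor.

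For part (2), I would compose the symmetric monoidal functor of (1) with the lax symmetric monoidal hyperbolic Grothendieck--Witt functor $ \mathrm{GW}^{\mathrm{ghyp}}\colon \Cat^p_\infty \to \Spectra^{C_2} $ established in \cite{CDHHLMNNSII}. Such lax symmetric monoidal composites carry $ \E_\infty $-algebras to $ \E_\infty $-algebras, and every object of $ C_2\E_\infty\Alg(\Spectra^{C_2}) $ is tautologically a commutative algebra under its (cocartesian) tensor product of algebras, so the composite produces the required refinement of $ K_\R(R) $ to an object of $ \E_\infty\Alg(\Spectra^{C_2}) $.

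The main obstacle will be establishing the symmetric monoidality in part (1). Concretely, one must verify that for $ C_2 $-$ \E_\infty $-algebras $ A $ and $ B $, the natural comparison maps identify
\begin{equation*}
	(\Perf_{A^e}, \Qoppa^{s_A}_{A^e}) \otimes (\Perf_{B^e}, \Qoppa^{s_B}_{B^e}) \simeq (\Perf_{(A \otimes B)^e}, \Qoppa^{s_{A \otimes B}}_{(A \otimes B)^e}).
\end{equation*}
The underlying equivalence $ \Perf_{A^e} \otimes \Perf_{B^e} \simeq \Perf_{A^e \otimes B^e} $ is classical; the substantive content lies in matching the hermitian structures. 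Via the normed algebra picture of Proposition \ref{prop:normedrings_alt}, this reduces to the symmetric monoidality of both the geometric fixed points functor $ (-)^{\varphi C_2} $ and the Tate construction $ (-)^{tC_2} $, together with the observation that the normed structure $ s_{A \otimes B} $ is determined componentwise from $ s_A $ and $ s_B $, which is transparent from the componentwise description of coproducts in $ N\E_\infty\Alg(\Spectra^{C_2}) $.
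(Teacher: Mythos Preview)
Your approach to part (2) matches the paper's: both compose the functor from (1) with the lax symmetric monoidal Grothendieck--Witt functor. One bibliographic point: the paper attributes the lax symmetric monoidality of $\mathrm{GW}^{\mathrm{ghyp}}$ to the forthcoming \cite{CDHHLMNNSIV}, not to \cite{CDHHLMNNSII} as you do, so you should double-check where this is actually proved.

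For part (1) you take a genuinely different route. The paper's argument is direct and terse: it simply observes that a morphism $\phi\colon A \to B$ of $C_2$-$\E_\infty$-algebras induces a triple $(\delta,\gamma,\sigma)$ in the sense of \cite[Corollary 3.3.2]{CDHHLMNNSI}, and that this triple satisfies the Poincar\'e criterion of \cite[Lemma 3.3.3 and Definition 3.3.4]{CDHHLMNNSI}. It does not pass through Theorem~\ref{thm:mainthm} or MGIs at all. Your approach, by contrast, uses Theorem~\ref{thm:mainthm} to rewrite the source as normed algebras, identifies these with commutative MGIs, and then appeals to a symmetric monoidal enhancement of \cite[Construction 3.2.5]{CDHHLMNNSI}. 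This is conceptually appealing and makes the symmetric monoidality more transparent than the paper's sketch does, since the paper's proof of (1) really only exhibits functoriality and leaves the monoidal compatibility implicit.

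That said, your bridge has a step that needs more care. The claim that a normed algebra ``is exactly the data of a commutative MGI'' is not immediate. An MGI is just the recollement triple $(A^e, A^{\varphi C_2}, s_A)$, which by Corollary~\ref{cor:eqvtalg_recollement} is already the data of an $\E_\infty$-algebra in $\Spectra^{C_2}$; the \emph{additional} datum of a normed algebra is the square (\ref{eq:tatefrobenius_lift}) expressing compatibility of $s_A$ with the Tate-valued norm. For your identification to go through, you need to argue that whatever symmetric monoidal structure on MGIs you have in mind forces precisely this compatibility on commutative algebra objects---that is, that the unit and multiplication maps of a commutative MGI encode the diagram (\ref{eq:tatefrobenius_lift}). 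This is plausible but not something you can read off directly from \cite[Example 3.2.11]{CDHHLMNNSI}; you should spell it out or cite a precise statement.
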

\begin{proof}
	To prove \ref{propitem:poincare_modules}, it suffices to observe that a morphism of $ C_2 $-$ \E_\infty $-algebras $ \phi \colon A \to B $ induces a canonical triple $ (\delta, \gamma, \sigma) $ in the sense of Corollary 3.3.2 of \cite{CDHHLMNNSI} (corresponding to a hermitian functor $ \left(\Perf_{A^e},\Qoppa^{s_A}_{A^e} \right) \to \left(\Perf_{B^e},\Qoppa^{s_B}_{B^e} \right) $covering the induction $ \phi_* \colon \Mod_{A^e} \to \Mod_{B^e} $).  
	Furthermore, the triple $ (\delta, \gamma, \sigma) $ automatically satisfies the criterion of Lemma 3.3.3 \& Definition 3.3.4 of \emph{loc.cit}., hence the associated hermitian functor is in fact Poincaré.  

	To prove \ref{propitem:realKtheory_Einftyalg}, it suffices to exhibit a composite functor $ C_2 \E_\infty \Alg\left(\Spectra^{C_2}\right) \to \Cat_\infty^p \to \Spectra^{C_2} $ which is lax symmetric monoidal. 
	The former functor is lax symmetric monoidal by \ref{propitem:poincare_modules}. 
	The latter functor is that of \cite[Definition 4.5.1]{CDHHLMNNSII}.  
	That it is lax symmetric monoidal will appear in \cite{CDHHLMNNSIV}. 
\end{proof}

A ring $ R $ is said to satisfy the \emph{homotopy limit problem} if its genuine symmetric real K-theory is a Borel $ C_2 $-spectrum \cites{MR711065}[\S3]{CDHHLMNNSIII}. 
\begin{cor}\label{cor:realKtheory_normed}
\begin{itemize}
	\item Real topological K-theory $ KU_{\R} $ admits a unique refinement to a $ C_2 $-$ \E_\infty $ ring spectrum. 
	\item If $ A $ satisfies the \emph{homotopy limit problem}, then $ K_{\R}(A) $ admits a unique refinement to a $ C_2 $-$\E_\infty $ ring spectrum. 
\end{itemize}	
\end{cor}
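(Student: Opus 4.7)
The plan is to deduce both bullet points as immediate consequences of Proposition \ref{prop:Borel_are_normed}, which guarantees that any Borel $\E_\infty$-algebra in $C_2$-spectra admits an essentially unique refinement to a $C_2$-$\E_\infty$-algebra. Thus in each case the only tasks are (i) to exhibit the object in question as an $\E_\infty$-algebra in $C_2$-spectra and (ii) to verify that the underlying $C_2$-spectrum is Borel. In both cases (i) has already been recorded earlier in the paper, and (ii) is where the small amount of input specific to the corollary enters.

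For the first bullet, Recollection \ref{rec:real_top_Ktheory} (invoking \cite{MR3214285}) records that $KU_\R$ is an $\E_\infty$-algebra in $C_2$-spectra. To verify that the underlying $C_2$-spectrum is Borel I would check, equivalently, that the comparison map $KU_\R^{C_2}\to (KU_\R^e)^{hC_2}$ is an equivalence. This is Atiyah's classical identification $KO\simeq KU^{hC_2}$ (with $C_2$ acting on $KU$ by complex conjugation), i.e.\ the affirmative solution of the homotopy limit problem for topological K-theory. With this in hand, Proposition \ref{prop:Borel_are_normed} provides the unique $C_2$-$\E_\infty$ refinement.

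For the second bullet, part \ref{propitem:realKtheory_Einftyalg} of the proposition preceding the corollary equips $K_\R(A)$ with the structure of an $\E_\infty$-algebra in $C_2$-spectra. By the very definition of the homotopy limit problem given immediately above the corollary, assuming that $A$ satisfies the HLP is the same as assuming that $K_\R(A)$ is a Borel $C_2$-spectrum, so once again Proposition \ref{prop:Borel_are_normed} applies directly.

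There is no real obstacle here beyond citing the correct prior results. All of the substantive content has already been absorbed into Theorem \ref{thm:mainthm} and its corollary Proposition \ref{prop:Borel_are_normed}; what remains is the pair of essentially external inputs, namely Atiyah's theorem to treat $KU_\R$ and the definitional reformulation of the HLP to treat $K_\R(A)$.
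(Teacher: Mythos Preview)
Your proposal is correct and follows essentially the same approach as the paper: both bullets are reduced to Proposition \ref{prop:Borel_are_normed} by checking Borelness, invoking Atiyah's result for $KU_\R$ and the definition of the homotopy limit problem for $K_\R(A)$. The paper's proof is terser but cites the same ingredients (\cite{MR206940} for the first bullet, and implicitly the definition of the HLP for the second).
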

\begin{proof}
	By \cite{MR206940} (also see \cites[proof of Proposition 5.3.1]{MR2387923}[Corollary 7.6]{MR2240234}), $ KU_{\R} $ is Borel. 
	Both results follow from Proposition \ref{prop:Borel_are_normed}. 
\end{proof}

\subsection{A relative enhancement}
In this section we state a version of our main theorem \emph{relative} to an arbitrary base $ C_p $-$ \E_\infty $-algebra $ A $ (Example \ref{ex:param_calg}). 
In order to make sense of a $ C_p $-$ \E_\infty $-algebra over $ A $, we require a $ C_p $-symmetric monoidal structure on the category of $ A $-modules. 
That this is possible is suggested by the following
\begin{defn}\label{defn:relativenorm}
	Let $ A $ be a $ C_p $-$ \E_\infty $-ring in $ \Spectra^{C_p} $. 
	The \emph{($ A $-linear or relative) norm} is the functor 
	\begin{align*}
		\underline{N}_e^{C_p} \colon \Mod_{A^e} \xrightarrow{} \Mod_{A}\left(\Spectra^{C_p}\right) \\
		M \mapsto A \otimes_{{N}_e^{C_p}(A^e)}  N_e^{C_p} M
	\end{align*}
	Note that the reasoning of Lemma \ref{lemma:norm_cocontinuous_alg} applies to show that $ \underline{N}_e^{C_p} $ lifts to a colimit-preserving functor $ \E_\infty \Alg_{A^e} \to \E_\infty\Alg_{A}$.
\end{defn}
By Proposition \ref{prop:C2Einftymodules_are_C2sym_mon} (communicated by Jay Shah), we may regard the category of $ A $-modules as a $ C_p $-symmetric monoidal $ \infty $-category. 
\begin{defn}
	Let $ A $ be a $ C_p $-$ \E_\infty $-algebra (Example \ref{ex:param_calg}). 
	The $ \infty $-category of $ C_p $-$ \E_\infty $-$ A $-algebras is $ \Alg_{\underline{\Fin}_{C_p,*}}\left(\underline{\Fin}_{C_p,*},\left(\underline{\Mod_A}\right)^\otimes\right) =: C_p\E_\infty\Alg_A $ (Definition \ref{defn:param_alg}).
	In other words, it is the category of sections of $ \underline{\Mod}_A^{\otimes} \to \underline{\Fin}_{C_p,*} $ which take inert morphisms to inert morphisms. 
\end{defn}
There is moreover a relative notion of normed algebras over $ A $. 
\begin{defn}\label{defn:normedalgebras}
	Let $ A $ be a $ C_p $-$ \E_\infty $-ring. 
	We define the category $ N \E_\infty \Alg_A $ of \emph{normed $ \E_\infty $-$ A $-algebras} to be the limit of the diagram
	\begin{equation*}
	\begin{tikzcd}[column sep=small]
		& \Fun\left(\mathcal{O}_{C_p},\E_\infty\Alg_{A^e}\right) \ar[dd,"{\ev_{C_p},\ev_{C_p/C_p}}"]	& \\
		& & \Fun\left(\mathcal{O}_{C_p},\E_\infty\Alg_A\right) \ar[dd,"{\ev_{C_p}, \ev_{C_p/C_p}}"] \ar[lu,"{(-)^e}"'] \\
		&\E_\infty\Alg_{(A^e)^{\otimes p}} \times \E_\infty\Alg_{A^e} & \\
		\E_\infty\Alg_A  \ar[rr,"{N^{C_p}(-^e) \times \id}"'] \ar[ru] \ar[ruuu,"{m \circ (-^e)}", bend left=30] & &\E_\infty\Alg_{N^{C_p}(A^e)} \times \E_\infty\Alg_A \ar[lu,"{(-)^e \times (-)^e}"]
	\end{tikzcd}
	\end{equation*}
	where we have abbreviated $ \E_\infty\Alg_A = \E_\infty\Alg_A\left(\Spectra^{C_p}\right) $, $\E_\infty\Alg_{(A^e)^{\otimes p}} = \E_\infty\Alg_{(A^e)^{\otimes p}}\left(\Spectra^{BC_p \times BC_p}\right) $, etc.
\end{defn}
There is a relative version of the main results of this paper.
\begin{prop} \label{prop:rel_operadic_diagrammatic_comparison}
	Let $ A $ be a $ C_p $-$ \E_\infty $-ring. 
	There is a canonical forgetful functor
	\begin{equation*}
		\gamma_A \colon C_p \E_\infty \Alg_A \to N\E_\infty \Alg_A
	\end{equation*}
\end{prop}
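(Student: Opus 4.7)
The plan is to mimic the proof of Corollary \ref{cor:operadic_diagrammatic_comparison} verbatim in the relative setting, replacing the $C_p$-symmetric monoidal $\infty$-category $\underline{\Spectra}^{C_p}$ with $\underline{\Mod}_A$ throughout. The essential input is the $C_p$-symmetric monoidal structure on $\underline{\Mod}_A^\otimes \to \underline{\Fin}_{C_p,*}$ (Proposition \ref{prop:C2Einftymodules_are_C2sym_mon}), together with the fact that under this structure the norm functor corresponds precisely to the relative norm $\underline{N}_e^{C_p}$ of Definition \ref{defn:relativenorm}. Concretely, the classifying functor $\zeta_A \colon \Span(\Fin_{C_p}) \to \Alg_{\E_1}(\Cat)$ assigns $C_p/C_p \mapsto \Mod_A(\Spectra^{C_p})$, $C_p \mapsto \Mod_{A^e}(\Spectra^{BC_p})$, sends the collapse span to the restriction functor $(-)^e$, and sends the opposite span to $\underline{N}_e^{C_p}$.

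First, I would construct relative analogues $\gamma^A_i$ of the five restriction functors of Construction \ref{cons:restrictions} by pulling back $C_p \E_\infty\Alg_A$-sections along the functors $\iota_T$ and $\iota_{\alpha_i}$ of Notation \ref{ntn:paramfinC2set_breakdown}. This produces
\begin{itemize}
\item $\gamma^A_1 \colon C_p\E_\infty\Alg_A \to \E_\infty\Alg_A$,
\item $\gamma^A_4 \colon C_p\E_\infty\Alg_A \to \E_\infty\Alg_{N^{C_p}(A^e)} \times \E_\infty\Alg_A$,
\item $\gamma^A_5 \colon C_p\E_\infty\Alg_A \to \E_\infty\Alg_{(A^e)^{\otimes p}} \times \E_\infty\Alg_{A^e}$,
\item $\gamma^A_2 \colon C_p\E_\infty\Alg_A \to \Fun(\mathcal{O}_{C_p}, \E_\infty\Alg_A)$,
\item $\gamma^A_3 \colon C_p\E_\infty\Alg_A \to \Fun(\mathcal{O}_{C_p}, \E_\infty\Alg_{A^e})$.
\end{itemize}
The targets are exactly the vertices $\mathcal{N}_A(i)$ of the diagram defining $N\E_\infty\Alg_A$ in Definition \ref{defn:normedalgebras}.

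Second, I would verify the six compatibility equivalences of Proposition \ref{prop:restrictions_compatibilities} in the relative setting. The arguments are formally identical: each equivalence follows from an explicit comparison of inert morphisms in $\underline{\Fin}_{C_p,*}$ (the diagrams (\ref{diagram:underlying_of_fold}) and (\ref{diagram:index_underlying_of_norm})), together with the fact that morphisms of $\mathcal{T}$-operads preserve inert edges, so the relevant natural transformations are equivalences. The shear-identification-of-$g$-with-the-fold-map argument used for statement (f) goes through unchanged because it is purely combinatorial at the level of $\mathcal{O}_{C_p}$-indexed diagrams and is not sensitive to which fiberwise symmetric monoidal $\infty$-category we are working in.

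Finally, having assembled the $\gamma^A_i$ into a cone over $\mathcal{N}_A \colon K \to \Cat_\infty$, the universal property of the limit yields the desired functor $\gamma_A \colon C_p\E_\infty\Alg_A \to N\E_\infty\Alg_A$. No step is really an obstacle given the non-relative case; the one point requiring care is confirming that the $C_p$-symmetric monoidal structure on $\underline{\Mod}_A$ genuinely sends the characteristic spans to $\underline{N}_e^{C_p}$ and $(-)^e$ as described—this is where the content of Proposition \ref{prop:C2Einftymodules_are_C2sym_mon} is used, and it is the only place where the relative version differs substantively from Corollary \ref{cor:operadic_diagrammatic_comparison}.
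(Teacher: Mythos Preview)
Your proposal is correct and follows exactly the approach the paper takes: the paper's proof is simply ``Proceeds as in proof of Corollary \ref{cor:operadic_diagrammatic_comparison},'' and you have faithfully unpacked what that entails in the relative setting. Your identification of Proposition \ref{prop:C2Einftymodules_are_C2sym_mon} as the one new ingredient needed is also spot on.
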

\begin{proof}
	Proceeds as in proof of Corollary \ref{cor:operadic_diagrammatic_comparison}. 
\end{proof}
\begin{theorem}\label{thm:relative_mainthm}
	Let $ A $ be a $ C_p $-$ \E_\infty $-ring. 
	Then the canonical comparison functor $ \gamma_A $ of Proposition \ref{prop:rel_operadic_diagrammatic_comparison} is an equivalence. 
\end{theorem}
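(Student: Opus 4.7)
The strategy is to mirror the proof of the absolute Theorem \ref{thm:mainthm} verbatim, replacing $ \Spectra $ by $ \Mod_{A^e} $ throughout and the Hill--Hopkins--Ravenel norm by the $ A $-linear norm $ \underline{N}_e^{C_p} $ of Definition \ref{defn:relativenorm}. Concretely, I would exhibit both $ C_p\E_\infty\Alg_A $ and $ N\E_\infty\Alg_A $ as categories of algebras over the same monad on the base category appearing in the lower-left vertex of the diagram in Definition \ref{defn:normedalgebras}, and then invoke \cite[Proposition 4.7.3.16]{LurHA}.

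First I would establish the two monadicity statements. The relative version of Proposition \ref{prop:forgetC2ringmap_monadic} follows by the same proof: the forgetful $ G_A : C_p\E_\infty\Alg_A \to \E_\infty\Alg_A(\Spectra^{C_p}) $ admits a left adjoint by \cite[Theorem 4.3.4]{NS22} applied to the inclusion of indexing systems $ \Com_{\mathcal{T}^\simeq}^\otimes \inj \Com_{\mathcal{T}}^\otimes $ now over the $ C_p $-symmetric monoidal category $ \underline{\Mod}_A^\otimes $ of Proposition \ref{prop:C2Einftymodules_are_C2sym_mon}, and is conservative by \cite[Corollary 5.1.5]{NS22}. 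The relative version of Proposition \ref{prop:normedalg_monadic} is essentially formal: by Definition \ref{defn:relativenorm} the functor $ \underline{N}_e^{C_p} $ preserves all small colimits on $ \E_\infty $-algebras, so the entire diagram of Definition \ref{defn:normedalgebras} lives in $ \Pr^L $, and the Barr--Beck--Lurie argument used in Proposition \ref{prop:normedalg_monadic} goes through word-for-word.

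Next I would prove the $ A $-linear analog of Theorem \ref{theorem:freenormedalg_formula}: for $ M \in \E_\infty\Alg_{A^e} $ (or its appropriate parametrized upgrade in the sense of Lemma \ref{lemma:Einftyalg_2ways}), the underlying $ C_p $-spectrum of the free $ C_p $-$ \E_\infty $-$A$-algebra $ F_A(M) $ has $ e $-component $ M $ and $ \varphi C_p $-component $ M^{\varphi C_p} \otimes_{A^{\varphi C_p}} (M_{hC_p}) $ (with the obvious structure map to $ M^{tC_p} $ built from the $ A $-linear Tate-valued norm). Recollection \ref{recollection:freealg_colimit} applies in the $ \underline{\Mod}_A^\otimes $-parametrized setting, so the computation reduces to the same parametrized left Kan extension; the cofinality arguments of Lemmas \ref{lemma:freeC2alg_colimdiagram} and \ref{lemma:cofinality} depend only on the indexing $ \infty $-category $ (\Com_{\mathcal{T}^\simeq,act}^\otimes)_{\underline{x}} $ and are therefore unchanged. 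Finally the relative version of Proposition \ref{prop:unit_in_normedalg} proceeds identically: one checks via the relative recollement of $ \Mod_A $ that the fiber of $ \eta^* \circ G_A' $ over a given morphism is the space of fillings of a cube, which is contractible by the $ (-)^{\mathrm{triv}} \dashv (-)_{hC_p} $ adjunction carried out internal to $ \Mod_{A^e} $.

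Concluding, both monads on $ \E_\infty\Alg_A(\Spectra^{C_p}) $ agree on free algebras, hence the unit induces an equivalence of monads and $ \gamma_A $ is an equivalence by \cite[Proposition 4.7.3.16]{LurHA}. The main technical obstacle is verifying that the parametrized monoidal envelope machinery of \S\ref{subsection:param_monoidal_envelope} admits a genuinely $ A $-linear upgrade; in particular, that the formula for $ F_A(M) $ is compatible with the $ C_p $-symmetric monoidal structure on $ \underline{\Mod}_A^\otimes $ in a way that makes the $ A $-linear Tate-valued norm play the role of the ordinary one in Theorem \ref{theorem:freenormedalg_formula}. Once this compatibility is set up, every other step is a straightforward transcription of the absolute argument.
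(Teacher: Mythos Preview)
Your proposal is correct and follows exactly the approach the paper takes: the paper's proof of Theorem \ref{thm:relative_mainthm} consists of the single line ``Proceeds as in proof of Theorem \ref{thm:mainthm},'' and you have faithfully unpacked what that entails. If anything, your write-up is more explicit than the paper itself about which ingredients (relative monadicity, the $A$-linear free-algebra formula, the cofinality lemmas, and the unit argument) need to be transported to the relative setting and why they go through.
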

\begin{proof}
	Proceeds as in proof of Theorem \ref{thm:mainthm}. 
\end{proof}

\appendix

\section{Modules over normed equivariant algebras}
In this appendix, we show that the category of modules over a $ C_p $-$ \E_\infty $-ring naturally acquires a structure of a $ C_p $-symmetric monoidal $ \infty $-category in the sense of Nardin--Shah via a relative norm (cf. Definition \ref{defn:relativenorm}). 
The author would like to thank Jay Shah who communicated details of this construction. 

Fix $ \kappa $ a regular cardinal and let $ \mathcal{K} $ denote the collection of $ \kappa $-small simplicial sets. 
\begin{recollection}
	\cite[Notation 4.8.3.5.]{LurHA} 
	Write $ \Alg_{\E_1}(\Cat_\infty) $. 
	It has objects given by monoidal $ \infty $-categories which are compatible with $ \kappa $-indexed colimits and whose morphisms are monoidal functors $ F: \cat^\otimes \to \mathcal{D}^\otimes $ whose preserve $ \kappa $-indexed colimits. 
	Write $ U \colon \Alg_{\E_1}(\Cat_\infty) \to \Cat_\infty $ for the forgetful functor which forgets the monoidal structure. 

	\cite[Definition 4.8.3.7.]{LurHA}
	There is an $ \infty $-category $ \Cat^{\Alg}_\infty(\mathcal{K}) $. 
	Informally its objects are given by pairs $ (\cat^\otimes, A) $ where $ \cat $ is a monoidal $ \infty $-category and $ A $ is an algebra object of $ \cat $. 
	Morphisms from $ (\cat^\otimes,A) $ to $ (\mathcal{D}^\otimes,B) $ are given by monoidal functors $ F: \cat \to \mathcal{D} $ such that $ F(A) \simeq B $. 

	Similarly, there is an $ \infty $-category $ \Cat^{\Mod}_\infty(\mathcal{K}) $ whose objects are pairs $ (\cat^\otimes, \mathcal{M}) $ where $ \cat $ is a monoidal $ \infty $-category and $ \mathcal{M} $ is an $ \infty $-category left tensored over $ \cat $. 
	In particular, there is a forgetful functor 
	\begin{equation}\label{eq:forget_catmod_to_mod}
	\begin{split}
		\Upsilon \colon & \Cat^{\Mod}_\infty(\mathcal{K}) \to \Cat_\infty \\
			& (\cat^\otimes, \mathcal{M}) \mapsto \mathcal{M}
	\end{split} . 
	\end{equation}

	\cite[Construction 4.8.3.24]{LurHA} There is a functor $ \Theta $ making the following diagram commute: 
	\begin{equation}\label{diagram:algebras_categorification}
		\begin{tikzcd}
			\Cat^{\Alg}_\infty(\mathcal{K}) \ar[rr,"{\Theta:(\cat^\otimes, A) \mapsto (\cat^\otimes,\Mod_A(\cat))}"] \ar[rd,"{u_a}"'] & & \Cat^{\Mod}_\infty(\mathcal{K}) \ar[ld,"{u_m}"] \\
				& \Alg_{\E_1}(\Cat_\infty) &
		\end{tikzcd}
	\end{equation}
	 where the vertical arrows are the universal fibrations classifying families of $ \E_1 $-algebras in $ \cat $ and modules over said algebras in $ \cat $, respectively \cite[Remarks 4.8.3.8. \& 4.8.3.20.]{LurHA}.   
\end{recollection}
\begin{ntn}
	Hereafter, we drop $ \kappa, \mathcal{K} $ from notation. 
\end{ntn}

\begin{lemma}\label{lemma:normedalg_as_familyofalg}
	Let $ A $ be a $ C_p $-$ \E_\infty $-algebra object of $ \left(\underline{\Spectra}^{C_p}\right)^\otimes $, and recall the functor $ \zeta \colon \Span(\Fin_{C_p}) \to \Alg_{\E_1}(\Cat) $ of Recollection \ref{recollection:Cpcat_norms}. 
	Then $ A $ lifts to a cocartesian section $ \tilde{A} $ of $ \int \zeta $. 
\end{lemma}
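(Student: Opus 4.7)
The plan is to appeal to the straightening--unstraightening correspondence of \cite[Theorem 2.3.9]{NS22} (recalled in Recollection \ref{recollection:Cpcat_norms}) which presents the $C_p$-symmetric monoidal structure $(\underline{\Spectra}^{C_p})^\otimes \to \underline{\Fin}_{C_p,*}$ as essentially the same data as the classifying functor $\zeta \colon \Span(\Fin_{C_p}) \to \Alg_{\E_1}(\Cat_\infty)$. Under this correspondence, a $C_p$-$\E_\infty$-algebra $A$---which by Definition \ref{defn:param_alg} is a section of $(\underline{\Spectra}^{C_p})^\otimes \to \underline{\Fin}_{C_p,*}$ preserving inert morphisms---unpacks into a compatible family of $\E_1$-algebras indexed by $T \in \Span(\Fin_{C_p})$: one in each monoidal fiber $\zeta(T)$. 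Concretely, these are the image of $A$ under the evident restrictions. The key interpretation is therefore of $\int\zeta := \Cat_\infty^{\Alg} \times_{\Alg_{\E_1}(\Cat_\infty)} \Span(\Fin_{C_p}) \to \Span(\Fin_{C_p})$, whose fiber over $T$ is $\Alg_{\E_1}(\zeta(T))$ by Remark 4.8.3.8 of \cite{LurHA}.

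To produce $\tilde A$ from $A$, I would proceed in two steps. First, specify the values: for each orbit $T \in \mathcal{O}_{C_p} \subseteq \Span(\Fin_{C_p})$, restrict $A$ to the fiber of $\underline{\Fin}_{C_p,*}$ over $T$ and use the operadic structure (in particular the Segal condition of Definition \ref{defn:param_operad}(2)) to extract an $\E_\infty$-algebra (in particular an $\E_1$-algebra) $\tilde A(T) \in \zeta(T)$; for a disjoint union of orbits, this reduces to products by the same Segal condition. Second, verify the cocartesian property: for every span $T \xleftarrow{} U \xrightarrow{} T'$ in $\Span(\Fin_{C_p})$, the compatibility of $A$ with the corresponding structure morphism of $\underline{\Fin}_{C_p,*}$ (exhibited by the functor $\zeta(T \leftarrow U \to T')$, cf.\ the tabulation (\ref{eq:Cpcat_classifying_functor})) translates under the Nardin--Shah straightening into an equivalence $\zeta(T \leftarrow U \to T')(\tilde A(T)) \simeq \tilde A(T')$, which is precisely the cocartesian condition. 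This assembles $T \mapsto \tilde A(T)$ into a functor $\Span(\Fin_{C_p}) \to \Cat_\infty^{\Alg}$ lifting $\zeta$ along $u_a$, equivalently a cocartesian section of $\int\zeta$.

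The main work is carefully transporting the data across \cite[Theorem 2.3.9]{NS22}; however, once one observes that the morphisms of $\Span(\Fin_{C_p})$ are generated (subject to the semiadditive constraints) by restrictions, fold maps, and the norm $C_p \to C_p/C_p$, all of whose effects on $\tilde A$ are dictated by the operadic structure on $A$, the verification becomes routine. In short, the lemma is essentially a formal consequence of the equivalence, already implicit in \cite[\S2.4]{NS22} and \cite[\S9]{norms}, between $C_p$-$\E_\infty$-algebras in $\underline{\Spectra}^{C_p}$ and ``Mackey-style'' normed algebras indexed by $\Span(\Fin_{C_p})$.
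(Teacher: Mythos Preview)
Your proposal is correct and morally matches the paper's argument, but the mechanism differs. You propose extracting the fiberwise $\E_1$-algebras via the Segal condition and then verifying the cocartesian condition span-by-span (restrictions, folds, and the norm $C_p \to C_p/C_p$), appealing to the straightening--unstraightening of \cite[Theorem 2.3.9]{NS22} as the conceptual justification. The paper instead packages everything into a single explicit functor
\[
\iota \colon \Fin_* \times \underline{\Fin}_{C_p,*} \longrightarrow \underline{\Fin}_{C_p,*}, \qquad (S,\, [T \to C_p/H]) \longmapsto [S \times T \to C_p/H],
\]
so that restricting $A$ along $\iota_T = - \times T$ (and then along $\mathrm{Assoc}^\otimes \to \Fin_*$) directly yields the associative algebra in $\zeta(T)$, while the functoriality of $\iota$ in the second variable handles all compatibilities simultaneously. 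The existence of $\tilde A$ then drops out of the universal property of $\Cat_\infty^{\Alg}$ as classifying families of $\E_1$-algebras (\cite[Remark 4.8.3.8]{LurHA}).

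The payoff of the paper's route is that it avoids any case analysis on spans: the single functor $\iota$ encodes the coherence, so one never needs to separately check the norm or fold cases you mention. Your approach is more transparent about what is being verified, but the phrase ``the verification becomes routine'' conceals exactly the coherence-tracking that the paper's $\iota$ handles uniformly. Both are valid; the paper's is shorter.
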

\begin{proof}
	Let $ T \to C_p/H $ be an object of $ \underline{\Fin}_{C_p,*} $. 
	There is a natural functor $ \iota_T := - \times T \colon \Fin_* \to \underline{\Fin}_{C_p,*} $. 
	Moreover, $ \iota_{(-)} $ assembles to give the functor
	\begin{equation*}
	\begin{split}
		\iota \colon \Fin_* \times \underline{\Fin}_{C_p,*} & \to \underline{\Fin}_{C_p,*} \\
		(S, T \to C_p/H) & \mapsto S \times T \to C_p/H
	\end{split}. 
	\end{equation*}
	Consider the restriction $ \iota_T^*A $ of $ A $ along $ \iota_T $. 
	We may further precompose $ \iota_T^*A $ with the structure morphism $ \mathrm{Assoc}^\otimes \to \Fin_* $ where $ \mathrm{Assoc}^\otimes $ is the $ \E_1 $ operad of \cite[Definition 4.1.1.3]{LurHA}.
	Then by the characterization of inert morphisms of Theorem 2.3.3 of \cite{NS22}, $ \iota_T^*A $ and by definition of morphisms of operads (both parametrized and non-parametrized), $ \iota_T^*A $ defines an associative algebra object in $ \zeta(T) $. 
	Likewise $ \iota^*A $ defines a $ \underline{\Fin}_{C_p,*} $-family of associative algebra objects, hence the existence of $ \tilde{A} $ follows by the universal property characterizing $ \Cat^{\Alg}_\infty $.
\end{proof}
\begin{variant}\label{variant:normedalg_family_unit}
	Let $ \mathcal{K}_0 $ denote the full subcategory of the arrow category $ \Ar(\Fin_*) $ on those arrows given by the inclusion of the basepoint $ \{*\} \inj S $. 
	There is a variant functor
	\begin{equation*}
	\begin{split}
		\iota_\eta \colon \mathcal{K}_0  \times \underline{\Fin}_{C_p,*} & \to \underline{\Fin}_{C_p,*} \\
		(\{* \} \inj S, T) & \mapsto (\{* \} \inj S) \times T
	\end{split}
	\end{equation*}
	which defines a $ \Delta^1 \times \underline{\Fin}_{C_p,*} $-family of associative algebra objects. 
\end{variant}
Now consider the commutative diagram
\begin{equation*}
	\begin{tikzcd}
		\int \zeta \ar[d] \ar[r,"{u_a^*(\zeta)}"] & \Cat^{\Alg}_\infty \ar[r,"\Theta"] \ar[d,"{u_a}"] & \Cat^{\Mod}_\infty \ar[r,"\Upsilon"] \ar[d,"{u_m}"] & \Cat_\infty \\
		\underline{\Fin}_{C_p,*} \ar[r,"\zeta"] \ar[u,"\tilde{A}", bend left=30] & \Alg_{\E_1}(\Cat) \ar[r,equals] & \Alg_{\E_1}(\Cat) & 
	\end{tikzcd}
\end{equation*}
where $ \tilde{A} $ exists by Lemma \ref{lemma:normedalg_as_familyofalg} and the center square is (\ref{diagram:algebras_categorification}). 

\begin{defn}\label{defn:module_over_C2Einfty_alg}
	Let $ A $ be a $ C_p $-$ \E_\infty $-algebra in $ C_p $-spectra. 
	Recall the Grothendieck construction \cite[Theorem 2.2.1.2]{LurHTT}. 
	Let $ \underline{\Mod}_A^\otimes $ be the $ C_p $-symmetric monoidal $ \infty $-category classified by the morphism $ \Upsilon \circ \Theta \circ u_a^*(\zeta) \circ \tilde{A} $. 
	Let $ \underline{\Mod}_A $ denote the corresponding underlying $ C_p $-$ \infty $-category of $ \underline{\Mod}_A^\otimes $.  
\end{defn}
\begin{exs}
\begin{enumerate}[label=(\arabic*)]
	\item When $ A = \sphere^0 $, we recover $ \underline{\Mod}_A \simeq \underline{\Spectra}^{C_p} $. 
	\item Suppose $ A $ is a $ C_p $-$ \E_\infty $-ring. 
	Then $ \underline{\Mod}_A $ may be regarded as the $ \mathcal{O}_{C_p} $-diagram \emph{of stable $ \infty $-categories}
	\begin{equation*}
	\begin{tikzcd}
		\Mod_{A}\left(\Spectra^{C_p}\right) \ar[r,"{(-)^e}"] &	\Mod_{A^e}(\Spectra) \ar[loop below,"{\sigma_*}",out=-60, in=240,distance=15] 
	\end{tikzcd}  .
	\end{equation*}
	The morphism $ [C_p \to C_p/C_p] \to [C_p/C_p = C_p/C_p] $ in $ \underline{\Fin}_{C_p,*} $ classifies the relative norm $ \underline{N}^{C_p} $ of Definition \ref{defn:relativenorm}.
\end{enumerate}
\end{exs}
The previous discussion shows that 
\begin{prop}\label{prop:C2Einftymodules_are_C2sym_mon}
	Let $ A $ be a $ C_p $-$ \E_\infty $-algebra in $ C_p $-spectra. 
	Then the $ C_p $-$ \infty $-category of Definition \ref{defn:module_over_C2Einfty_alg} naturally acquires a $ C_p $-symmetric monoidal structure in the sense of \cite[Definition 2.2.1]{NS22}. 
\end{prop}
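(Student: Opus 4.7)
The plan is to verify directly that the cocartesian fibration $\underline{\Mod}_A^\otimes \to \underline{\Fin}_{C_p,*}$ constructed in Definition \ref{defn:module_over_C2Einfty_alg} satisfies the axioms of a $C_p$-symmetric monoidal $\infty$-category, namely that it is a fibration of $C_p$-$\infty$-operads (Definition \ref{defn:param_operad}) whose structure map is cocartesian.

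First, since $\underline{\Mod}_A^\otimes$ is by construction classified by a bona fide functor $\Upsilon \circ \Theta \circ u_a^*(\zeta) \circ \tilde{A} \colon \underline{\Fin}_{C_p,*} \to \Cat_\infty$, its unstraightening (Remark \ref{rmk:param_unstraighten}) produces a cocartesian fibration. This immediately gives the existence of cocartesian lifts over \emph{all} morphisms, in particular over the inert morphisms required by condition (1) of Definition \ref{defn:param_operad}, and makes condition (3) (the mapping-space decomposition) automatic, since mapping spaces in a cocartesian fibration are computed by passing to the target fiber via pushforward.

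The substantive content is thus the Segal condition \ref{defitem:param_segal_condition} of Definition \ref{defn:param_operad}: for $f_+ = [U_+ \to V]$ we must exhibit an equivalence $\left(\underline{\Mod}_A^\otimes\right)_{f_+} \xrightarrow{\sim} \prod_{W \in \mathrm{Orbit}(U)} \left(\underline{\Mod}_A^\otimes\right)_{I(W)_+}$ induced by the characteristic maps. Tracing through the construction, the left-hand fiber is $\Mod_{\tilde{A}(f_+)}\!\left(\zeta(f_+)\right)$. The Segal condition for $\zeta$ itself---which is built into its definition as a functor out of the span category $\Span(\Fin_{C_p})$ landing in $\Alg_{\E_1}(\Cat_\infty)$, cf.\ Recollection \ref{recollection:Cpcat_norms}---provides a monoidal equivalence $\zeta(f_+) \simeq \prod_{W} \zeta(I(W)_+)$, under which $\tilde{A}(f_+)$ corresponds to the tuple $(\tilde{A}(I(W)_+))_W$ by Lemma \ref{lemma:normedalg_as_familyofalg} applied to inert restrictions. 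The desired decomposition then follows from the compatibility of the $\Mod(-)$ construction with finite products of monoidal $\infty$-categories equipped with algebras, i.e.\ $\Mod_{(A_1,A_2)}(\cat_1 \times \cat_2) \simeq \Mod_{A_1}(\cat_1) \times \Mod_{A_2}(\cat_2)$.

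The main obstacle is this last compatibility---that $\Theta$ sends products to products at the level of the diagram (\ref{diagram:algebras_categorification}). This is a general fact about algebras and modules in symmetric monoidal $\infty$-categories, most cleanly deduced from the universal property characterizing $\Cat_\infty^{\Mod}$ via parametrized families of modules together with the observation that the universal fibration $u_m$ of module categories pulls back along products in the base. Given this, no further verification is required: conditions (1)--(3) of Definition \ref{defn:param_operad} are all confirmed, and the cocartesian property of the structure map is automatic from the unstraightening of a genuine functor to $\Cat_\infty$.
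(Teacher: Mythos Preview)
Your proposal is correct and in fact more explicit than the paper's treatment. The paper offers no separate proof: it simply writes ``The previous discussion shows that'' before the proposition, taking the construction of Definition \ref{defn:module_over_C2Einfty_alg} as self-evidently producing a $C_p$-symmetric monoidal $\infty$-category, presumably by implicit appeal to the same straightening machinery (Theorem 2.3.9 of \cite{NS22}) that was used for $\zeta$ in Recollection \ref{recollection:Cpcat_norms}. Your verification of the Segal condition via the product-compatibility of $\Mod_{(-)}(-)$ is exactly the content left implicit there, so you have supplied the missing justification rather than taken a different route.
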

\begin{rmk}
	By the proof of Lemma \ref{lemma:normedalg_as_familyofalg}, each $ A_T $ for $ T \to C_p/H $ an object of $ \underline{\Fin}_{C_p,*} $ is in fact an $ \E_\infty $-algebra in $ \zeta(T) $. 
	Thus we write modules instead of left modules \cite[Corollary 4.5.1.6]{LurHA}. 
\end{rmk}
\begin{construction} [Parametrized base change]
	Since $ \underline{\Fin}_{C_p,*} $ is unital, the category of $ C_p $-$ \E_\infty $-algebras in $ C_p $-spectra has an initial object $ \underline{\mathbbm{1}} $ \cite[Definition 5.2.1 \& Theorem 5.2.11, resp.]{NS22} given fiberwise by the sphere spectrum. 
	As in Lemma \ref{lemma:normedalg_as_familyofalg}, $ \underline{\mathbbm{1}} $ lifts to a coCartesian section $ \tilde{\underline{\mathbbm{1}}} $ of $ \int \zeta $. 
	Suppose $ A $ is a $ C_p $-$ \E_\infty $-ring spectrum. 
	Variant \ref{variant:normedalg_family_unit} shows that the unit map $ \eta \colon \underline{\mathbbm{1}} \to A $ induces a natural transformation $ \tilde{\eta} \colon \tilde{\underline{\mathbbm{1}}} \to \tilde{A} $. 
	Under the Grothendieck construction, the unstraightening of $ \Upsilon \circ \Theta  \circ u^*_a(\zeta) (\eta) $ corresponds to a $ C_p $-functor of $ C_p $-$ \infty $-categories which we denote by 
	\begin{equation*}
	 	- \otimes_{\sphere^0} A \colon \underline{\Spectra}^{C_p} \to \underline{\Mod}_A .
	\end{equation*} 
\end{construction}
Categories of modules behave in the expected way. 
\begin{prop}
	Let $ A $ be a $ C_p $-$ \E_\infty $-algebra in $ C_p $-spectra. 
	Then the $ C_p $-functor $ - \otimes_{\sphere^0} A \colon \underline{\Spectra}^{C_p} \to \underline{\Mod}_A $ participates in a $ C_p $-adjunction which is fiberwise monadic. 
\end{prop}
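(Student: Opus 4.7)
The plan is to construct the right adjoint fiberwise from the standard extension-of-scalars adjunction, verify a Beck--Chevalley condition so that these assemble into a parametrized $ C_p $-adjunction, and deduce fiberwise monadicity from the ordinary Barr--Beck--Lurie theorem applied pointwise.

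First I would identify the fibers. By Definition \ref{defn:module_over_C2Einfty_alg} and the functoriality of $ \Theta $ in (\ref{diagram:algebras_categorification}), the fiber of $ \underline{\Mod}_A $ over $ T \in \mathcal{O}_{C_p}^\op $ is $ \Mod_{A_T}(\zeta(T)) $, and the fiberwise component of $ - \otimes_{\sphere^0} A $ is extension of scalars along the unit $ \mathbbm{1}_T \to A_T $. Since $ \zeta(T) $ is presentably symmetric monoidal, the module-category results of \cite[\S4.5]{LurHA} supply a monadic adjunction
\begin{equation*}
(- \otimes A_T) \colon \zeta(T) \rightleftarrows \Mod_{A_T}(\zeta(T)) \colon U_T
\end{equation*}
for each orbit $ T $; conservativity is immediate from the definition of $ U_T $, and preservation of $ U_T $-split geometric realizations follows since these are reflected to colimits in $ \zeta(T) $. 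Once we know a $ C_p $-adjunction exists, this step immediately gives fiberwise monadicity.

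Next I would upgrade these fiberwise adjunctions to a single $ C_p $-adjunction using the parametrized Beck--Chevalley criterion (the $ C_p $-parametrized analog of \cite[Proposition 7.3.2.6]{LurHA}; compare the adjunction criterion of \cite{Shah18}). It suffices to check that $ - \otimes A_{(-)} $ commutes with the cocartesian pushforward along each morphism of $ \mathcal{O}_{C_p}^\op $. For the restriction $ C_p/C_p \leftarrow C_p $, the pushforward on $ \underline{\Spectra}^{C_p} $ is $ (-)^e $, which is strong symmetric monoidal, so $ (X \otimes A)^e \simeq X^e \otimes A^e $. For the norm $ C_p/C_p \leftarrow C_p $, the pushforward is $ N^{C_p} $ on $ \underline{\Spectra}^{C_p} $ and the relative norm $ \underline{N}^{C_p} $ of Definition \ref{defn:relativenorm} on $ \underline{\Mod}_A $, and by construction
\begin{equation*}
\underline{N}^{C_p}(X \otimes A^e) \simeq N^{C_p}(X \otimes A^e) \otimes_{N^{C_p}(A^e)} A \simeq N^{C_p}(X) \otimes A,
\end{equation*}
which matches the composite of first pushing forward $ X $ to $ N^{C_p}(X) \in \Spectra^{C_p} $ and then tensoring with $ A $.

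The main obstacle is the norm compatibility, but the relative norm was engineered precisely to produce this identification, so the verification reduces to the symmetric monoidality of $ N^{C_p} $ (Recollection \ref{rec:HHRnorm}) and associativity of base change along $ N^{C_p}(A^e) \to A $. Granted Beck--Chevalley, the fiberwise right adjoints $ U_T $ assemble into a $ C_p $-functor $ \underline{U}_A \colon \underline{\Mod}_A \to \underline{\Spectra}^{C_p} $ right adjoint to $ - \otimes_{\sphere^0} A $, and this $ C_p $-adjunction is fiberwise monadic by the first step.
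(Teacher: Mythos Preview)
Your approach and the paper's agree in substance: both reduce to (the dual of) \cite[Proposition 7.3.2.6]{LurHA}, which says that once the left adjoint preserves cocartesian arrows, fiberwise right adjoints assemble into a relative right adjoint. The paper is simply terser about the cocartesian preservation: since $- \otimes_{\sphere^0} A$ was \emph{constructed} as the unstraightening of a natural transformation of functors $\mathcal{O}_{C_p}^\op \to \Cat_\infty$ (the Parametrized base change construction), it is a $C_p$-functor by definition and hence preserves cocartesian arrows automatically. Your explicit check of compatibility with $(-)^e$ is thus unnecessary, though not wrong.

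One point of overreach: your verification of norm compatibility is not needed for this proposition. The statement only asks for a $C_p$-adjunction between the underlying $C_p$-$\infty$-categories $\underline{\Spectra}^{C_p}$ and $\underline{\Mod}_A$, which are cocartesian over $\mathcal{O}_{C_p}^\op$; the only nontrivial arrow there is the one inducing restriction $(-)^e$ (plus the $C_p$-automorphisms of the fiber over $C_p$). The norm $\underline{N}^{C_p}$ arises as a cocartesian pushforward only in the larger fibration $\underline{\Mod}_A^\otimes \to \underline{\Fin}_{C_p,*}$, and compatibility with it would be relevant for a $C_p$-\emph{symmetric monoidal} adjunction, which is a stronger claim than what is stated. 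Your computation there is correct, but it answers a question not asked.
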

\begin{proof}
	Notice that essentially by definition, the $ C_p $-functor $ - \otimes_{\sphere^0} A $ preserves coCartesian arrows. 
	By (the dual to) \cite[Proposition 7.3.2.6]{LurHA}, it suffices to check that $ - \otimes_{\sphere^0} A $  admits a fiberwise right adjoint, which is classical. 
\end{proof}
\begin{rmk}
	The strategy outlined here generalizes straightforwardly to endow the $ G $-$ \infty $-category of modules over a normed $ G $-$ \E_\infty $-ring spectrum with the structure of a $ G $-symmetric monoidal structure for any finite group $ G $. 
\end{rmk}
\begin{rmk}
	One expects a equivariant form of the Tannaka reconstruction theorem \cite[Propositions 7.1.2.6-7]{LurHA} by which a $ G $-$ \E_\infty $-ring $ A $ can be recovered from its category of modules endowed with its $ G $-symmetric monoidal structure. 
\end{rmk}

\renewcommand*{\bibfont}{\small}
{ \printbibliography}
\addcontentsline{toc}{section*}{References} 

\end{document}